\renewcommand{\leq}{\leqslant}
\renewcommand{\geq}{\geqslant}
\newcommand{\ptl}{\partial}
\newcommand{\rr}{{\mathbb{R}}}
\newcommand{\la}{\lambda}
\newcommand{\h}{\mathcal{H}}
\newcommand{\sub}{\subset}
\newcommand{\subeq}{\subseteq}
\newcommand{\escpr}[1]{\big<#1\big>}
\newcommand{\Sg}{\Sigma} 
\newcommand{\Om}{\Omega}
\newcommand{\eps}{\varepsilon}
\newcommand{\ga}{\gamma}
\newcommand{\Ga}{\Gamma}
\newcommand{\mnh}{|N_{h}|}
\newcommand{\nuh}{\nu_{h}}
\newcommand{\m}{\mathbb{N}(\kappa)}
\newcommand{\e}{\mathbb{M}(\kappa)}
\newcommand{\mm}{\mathbb{M}}
\newcommand{\s}{\mathcal{S}}
\newcommand{\rrr}{\mathcal{R}}
\newcommand{\sla}{\mathcal{S}_\lambda}
\newcommand{\var}{\varphi}
\newcommand{\ric}{\text{Ric}}
\DeclareMathOperator{\divv}{div}
\newtheorem{theorem}{Theorem}[section]
\newtheorem{proposition}[theorem]{Proposition}
\newtheorem{lemma}[theorem]{Lemma}
\newtheorem{corollary}[theorem]{Corollary}
\theoremstyle{definition}
\newtheorem{remark}[theorem]{Remark}
\newtheorem{remarks}[theorem]{Remarks}
\newtheorem{definition}[theorem]{Definition}
\newtheorem{example}[theorem]{Example}
\numberwithin{equation}{section}
\begin{document}

\title[Pansu spheres in sub-Riemannian $3$-space forms]{Existence, characterization and stability \\ of Pansu spheres in sub-Riemannian $3$-space forms}

\author[A.~Hurtado]{Ana Hurtado}
\address{Departamento de Geometr\'{\i}a y Topolog\'{\i}a \\
Universidad de Granada \\ E--18071 Granada \\ Spain}
\email{ahurtado@ugr.es}

\author[C.~Rosales]{C\'esar Rosales}
\address{Departamento de Geometr\'{\i}a y Topolog\'{\i}a \\
Universidad de Granada \\ E--18071 Granada \\ Spain}
\email{crosales@ugr.es}

\date{\today}

\thanks{The authors have been supported by Mec-Feder grant MTM2010-21206-C02-01, Mineco-Feder grant MTM2013-48371-C2-1-P, and Junta de Andaluc\'ia grant FQM-325. The second author has been also supported by the grant PYR-2014-23 of the GENIL program of CEI BioTic GRANADA} 

\subjclass[2000]{53C17, 53C42} 

\keywords{Sub-Riemannian $3$-manifolds, CC-geodesics, CC-Jacobi fields, isoperimetric problem, Pansu spheres, stationary surfaces, Hopf's theorem, Alexandrov's theorem, second variation, stable surfaces}

\begin{abstract}
Let $M$ be a complete Sasakian sub-Riemannian $3$-manifold of constant Webster scalar curvature $\kappa$. For any point $p\in M$ and any number $\la\in\rr$ with $\lambda^2+\kappa>0$, we show existence of a $C^2$ spherical surface $\sla(p)$ immersed in $M$ with constant mean curvature $\lambda$. Our construction recovers in particular the description of Pansu spheres in the first Heisenberg group \cite{pansu1} and the sub-Riemannian $3$-sphere \cite{hr1}. Then, we study variational properties of $\sla(p)$ related to the area functional. First, we obtain uniqueness results for the spheres $\sla(p)$ as critical points of the area under a volume constraint, thus providing sub-Riemannian counterparts to the theorems of Hopf and Alexandrov for CMC surfaces in Riemannian $3$-space forms. Second, we derive a second variation formula for admissible deformations possibly moving the singular set, and prove that $\sla(p)$ is a second order minimum of the area for those preserving volume. We finally give some applications of our results to the isoperimetric problem in sub-Riemannian $3$-space forms.
\end{abstract}

\maketitle

\thispagestyle{empty}

\section{Introduction}
\label{sec:intro}
\setcounter{equation}{0} 

The study of variational questions related to the area functional in sub-Riemannian geometry is a focus of recent attention, especially in the Heisenberg group $\mathbb{H}^n$, which is the simplest non-trivial example of a flat sub-Riemannian manifold. Indeed, the \emph{isoperimetric problem} in $\mathbb{H}^n$, where we seek regions minimizing the (sub-Riemannian) perimeter under a volume constraint, is one of the main open problems in this field with several important contributions in the last ten years. 

The existence and boundedness of isoperimetric solutions in $\mathbb{H}^n$ are consequence of more general results of Leonardi and Rigot for Carnot groups~\cite{lr}. Moreover, the first variational formulas for area and volume imply that the boundary of any $C^2$ solution is a compact hypersurface with constant mean curvature (CMC) in sub-Riemannian sense, see \cite{chmy} and \cite{rr1}. In 1982 it was conjectured by Pansu~\cite{pansu1} that the isoperimetric regions in $\mathbb{H}^1$ are congruent to the topological balls enclosed by a family $\{\sla\}_{\la>0}$ of CMC spheres of revolution with $C^2$ regularity, see \cite{monti}, \cite{leomas} and \cite{rr1} for explicit expressions of $\sla$. These spheres are geometrically obtained by rotating about the vertical axis in $\mathbb{H}^1$ a Carnot-Carath\'eodory geodesic (CC-geodesic) connecting two points on this axis. Such a geodesic is a Euclidean helix with curvature depending only on the vertical distance between its extreme points. Pansu's conjecture has been supported by many partial results, where further hypotheses involving regularity and/or symmetry of the solutions are assumed. We refer the reader to \cite[Ch.~8]{survey} and the Introduction of \cite{ritore-calibrations} for a precise account and description of the related works. More recently, Monti~\cite{monti-rearrangements} has analyzed symmetrization in $\mathbb{H}^n$, and Cheng, Chiu, Hwang and Yang~\cite{cchy} has used a notion of umbilicity to characterize Pansu's spheres in $\mathbb{H}^n$ with $n\geq 2$. It is worth mentioning that the optimal regularity for isoperimetric sets in $\mathbb{H}^n$ is still an open question. Indeed, though the conjectured solutions are $C^2$ smooth, there exist non-compact area-minimizing surfaces in $\mathbb{H}^1$ with less regularity, see \cite{pauls-regularity}, \cite{chy}, \cite{mscv} and \cite{r2}.

Besides the Heisenberg group, it is also natural to investigate the isoperimetric question in other sub-Riemannian $3$-spaces, especially in those representing the most simple and symmetric examples of the theory. These are the \emph{sub-Riemannian $3$-space forms}, defined in Section~\ref{subsec:sf} as complete Sasakian sub-Riemannian $3$-manifolds of constant Webster scalar curvature. By a result of Tanno~\cite{tanno} there is, up to isometries, a unique simply connected sub-Riemannian $3$-space form $\e$ of Webster scalar curvature $\kappa$. The space $\e$ is the Heisenberg group $\mathbb{H}^1$ for $\kappa=0$, the group of unit quaternions $\mathbb{S}^3\sub\rr^4$ for $\kappa=1$, and the universal cover of the special linear group $\text{SL}(2,\rr)$ for $\kappa=-1$. It is also known that the spaces $\e$ are the unique simply connected, homogeneous, contact sub-Riemannian $3$-manifolds with the largest possible isometry group, see for instance \cite{falbel1}. For an arbitrary sub-Riemannian $3$-space form $M$, standard Riemannian arguments show that $M$ is isometric to $\e/G$, where $\kappa$ is the Webster scalar curvature of $M$ and $G$ is a subgroup of isometries of $\e$ isomorphic to the fundamental group of $M$. This permits the construction of $3$-space forms with non-trivial topology like flat and hyperbolic cylinders $\rr^2\times\mathbb{S}^1$, as well as a sub-Riemannian model of the real projective space $\rr\mathbb{P}^3$, see Example~\ref{ex:topology} for details.

Our main motivation in this paper is to analyze a possible extension of Pansu's conjecture inside any sub-Riemannian $3$-space form $M$. For that, we must produce first CMC spheres $\sla(p)$ playing the same role in $M$ as Pansu's spheres in the Heisenberg group $\mm(0)$. This was already done in the sub-Riemannian $3$-sphere $\mathbb{M}(1)$, where the authors employed the explicit expression of the CC-geodesics to construct $\sla(p)$, see \cite{hr1}. In the present work we avoid the computation of CC-geodesics; instead of that, we will use \emph{CC-Jacobi fields} (introduced in Section~\ref{subsec:ccgeo}) systematically. This approach allows us to introduce and study in a unified way the spheres $\sla(p)$ for general sub-Riemannian $3$-space forms.

In Section~\ref{sec:spheres} we define $\sla(p)$ as follows. For a fixed point $p\in M$ and a number $\la\in\rr$ we consider the flow $F(\theta,s):=\ga_\theta(s)$ of all the CC-geodesics of curvature $\la$ in $M$ leaving from $p$ with unit horizontal velocity $\theta$. Thus, in order to produce a spherical surface, we require the CC-geodesics $\ga_\theta$ to meet again in $M$. This is clearly equivalent to that the associated vector fields $V_\theta(s):=(\ptl F/\ptl\theta)(s,\theta)$ vanish for some positive value. The remarkable fact is that $V_\theta$ is a CC-Jacobi field along $\ga_\theta$ with $V_\theta(0)=0$ and so, it is explicitly computable since the Webster scalar curvature of $M$ is constant. From this calculus we deduce in Lemma~\ref{lem:condition} that the arithmetic condition $\la^2+\kappa>0$ is equivalent to the existence of a first cut point $p_\la$ for the CC-geodesics $\ga_\theta(s)$ when $s:=\pi/\sqrt{\la^2+\kappa}$. This leads us to consider the set
\[
\sla(p):=\{\ga_\theta(s)\,;0\leq s\leq \pi/\sqrt{\la^2+\kappa}\},
\]
where $\theta$ ranges over the unit horizontal circle of $M$ at $p$. In the model spaces $\mm(0)$ and $\mm(1)$ the previous definition recovers the spherical surfaces described in \cite{pansu1} and \cite{hr1}. In the hyperbolic space $\e$ with $\kappa<0$ we show in Theorem~\ref{th:sla} that $\sla(p)$ is also an embedded $C^2$ surface congruent to a sphere of revolution with $C^\infty$ regularity off of the poles $\{p,p_\la\}$. So, by taking into account that any sub-Riemannian space form $M$ is a quotient of some $\e$, we are able to prove in Theorem~\ref{th:sla} that $\sla(p)$ is a $C^2$ oriented sphere immersed in $M$ with $C^\infty$ regularity off of the singular set $\{p,p_\la\}$, where the tangent plane equals the horizontal plane. Moreover, in Theorem~\ref{th:sla} we exhibit some geometric properties of $\sla(p)$ regarding its symmetry and embeddedness. Precisely, $\sla(p)$ is always Alexandrov embedded and invariant under the isometries of $M$ fixing $p$. As a difference with respect to the spaces $\e$ the sphere $\sla(p)$ need not be embedded in $M$ for small values of $\la$, see Remarks~\ref{re:pozo}. After some computations in the coordinates $(\theta,s)$ we deduce that $\sla(p)$ has CMC equal to $\la$ off of the poles. Furthermore, if $N_h$ denotes the horizontal projection of a unit normal $N$ along $\sla(p)$, then the function $\mnh^{-1}$ turns out to be integrable, see Corollary~\ref{cor:integrability}. This is a non-trivial fact (since $\mnh$ vanishes at the poles) that will be used frequently through this work.

Once we have constructed the spheres $\sla(p)$ it is natural to analyze if they provide nice candidates to solve the isoperimetric problem in any sub-Riemannian $3$-space form $M$. For that, we first observe that $\sla(p)$ is \emph{volume-preserving area-stationary}, i.e., a critical point of the area functional for deformations preserving volume. This is an immediate consequence of the characterization result in \cite{rr2}, see Theorem~\ref{th:vpstationary}, since $\sla(p)$ is a $C^2$ oriented CMC surface with finitely many singular points. Now, an interesting question is to characterize $\sla(p)$ in the family of volume-preserving area stationary surfaces, possibly under additional hypotheses. In Section~\ref{subsec:classresults} we prove some uniqueness results in this line providing sub-Riemannian counterparts to the well-known Hopf and Alexandrov theorems for CMC surfaces in Riemannian manifolds. 

Recall that Hopf's theorem \cite{hopf} establishes that a topological sphere immersed in $\rr^3$ with CMC is a round sphere. This was extended by Chern~\cite{chern} to arbitrary complete Riemannian $3$-manifolds of constant sectional curvature. In Theorem~\ref{th:hopf} we show that a $C^2$ oriented sphere with CMC immersed in a sub-Riemannian $3$-space form $M$ must be a spherical surface $\sla(p)$. Recently Cheng, Chiu, Hwang and Yang~\cite{cchy} have obtained a Hopf theorem for umbilic hypersurfaces in the Heisenberg group $\mathbb{H}^n$ with $n\geq 2$. On the other hand, Alexandrov's theorem~\cite{alexandrov} states that a compact CMC surface embedded in $\rr^3$ is a round sphere. The proof, based on the so-called Alexandrov reflection technique, can be readily generalize to the hyperbolic space and a hemisphere of $\mathbb{S}^3$. In Theorem~\ref{th:alexandrov} we characterize the spheres $\sla(p)$ as the unique compact volume-preserving area-stationary surfaces of class $C^2$ in $\mm(\kappa)$ with $\kappa\leq 0$, or inside an open hemisphere of $\mm(\kappa)$ with $\kappa>0$. This theorem was previously established in \cite{rr2} for the particular case of $\mm(0)$. The vertical Clifford tori in $\mathbb{S}^3$ illustrate that the Alexandrov theorem is no longer true if the surface is not entirely contained inside an open hemisphere. As we see in Remark~\ref{re:tekel} the theorem also fails for sub-Riemannian $3$-space forms with non-trivial topology. The proofs of Theorems~\ref{th:hopf} and \ref{th:alexandrov} rely on results in \cite{chmy} and \cite{rr2} about geometric, topological and variational properties of $C^2$ critical points of the area under a volume constraint, see Section~\ref{subsec:statsurf} for precise statements. These were employed in \cite{rr2} and \cite{hr1} to describe complete $C^2$ volume-preserving area-stationary surfaces with non-empty singular set in $\mm(0)$ and $\mm(1)$. With similar techniques Galli characterized $C^2$ area-minimizing and area-stationary surfaces in the roto-translation group \cite{galli}, and in the sub-Riemannian Sol manifold \cite{galli2}. Recently, Bernstein type problems and local minimizers of the area with $C^1$ or less regularity have been studied in the Heisenberg groups $\mathbb{H}^n$, see the Introduction of \cite{galli-ritore2} for a complete account of related works.

Once we know that the spheres $\sla(p)$ are volume-preserving area-stationary surfaces, we analyze in Section~\ref{sec:stability} if they are also \emph{second order minima of the area under a volume constraint}, thus providing natural candidates to solve the isoperimetric problem. As a fundamental tool, we need the second derivative of the functional $A+2\la V$ stated in Theorem~\ref{th:2ndsla} for certain deformations of $\sla(p)$. The second variation of the sub-Riemannian area has appeared in several contexts, see for instance \cite{chmy}, \cite{bscv}, \cite{dgn}, \cite{montefalcone}, \cite{mscv}, \cite{hrr}, \cite{ch2}, \cite{hp2}, \cite{galli}, \cite{montefalcone2} and \cite{galli-ritore2}. The derivation of the second variation formula involves a technical problem since, in contrast to the first variation, the differentiation under the integral sign is not always possible. Nevertheless, this problem is overcome if we require the employed deformations to be \emph{admissible}. These are introduced in Definition~\ref{def:admissible} as those satisfying the conditions necessary to apply the usual Leibniz's rule for differentiating under the integral sign, see Lemma~\ref{lem:difint}. We must remark that, unlike some previous approaches, our deformations are only assumed to be admissible; they are no restricted to fixing the singular set nor having a special form. Indeed, the notion of admissibility permits us to obtain a general second variation formula for CMC surfaces in arbitrary Sasakian sub-Riemannian $3$-manifolds, see Theorem~\ref{2ndgeneral}. The proof of Theorem~\ref{th:2ndsla} comes from Theorem~\ref{2ndgeneral} with an additional effort to demonstrate that all the divergence terms in the general second variation formula vanish for $\sla(p)$. For non-admissible variations the second derivative $A''(0)$ may exist or not and, in case of existence, it is not easy to compute it. Some particular examples of these situations for CMC surfaces with singular curves were discussed in \cite{hrr}, \cite{ch2} and \cite{galli}. 

From Theorem~\ref{th:2ndsla} the second derivative of the functional $A+2\la V$ associated to an admissible variation of a sphere $\sla(p)$ is controlled by a quadratic form $\mathcal{I}$, called the \emph{index form}, which involves the normal component of the velocity vector field and some geometric quantities defined off of the poles. Surprisingly, the expression in polar coordinates of $\mathcal{I}$ does not depend on the ambient manifold $M$ nor the point $p\in M$, which suggests that all the spheres $\sla(p)$ will share the same behaviour with respect to stability conditions. Indeed in Proposition~\ref{prop:strictstability} we deduce that $\sla(p)$ is always \emph{strongly stable}, that is $(A+2\la V)''(0)\geq 0$, for any admissible variation vanishing at the poles or along the equator. This generalizes a previous result of Montefalcone~\cite{montefalcone-spheres} for some particular variations of Pansu's spheres in the Heisenberg group $\mm(0)$. Nevertheless the spheres $\sla(p)$ are not strongly stable for arbitrary admissible variations. To see this it is enough to consider the deformation of $\sla(p)$ by Riemannian parallel surfaces, see Remark~\ref{re:chispa} for details. However, by using analytical arguments based on double Fourier series, we are able to prove in Theorem~\ref{th:stability} that $\sla(p)$ is \emph{stable under a volume constraint}, i.e., it has non-negative second derivative of the area under volume-preserving admissible deformations. In the Heisenberg group $\mathbb{H}^n$, Montefalcone~\cite{montefalcone-spheres} analyzed a $1$-dimensional eigenvalues problem to establish that the Pansu spheres satisfy the same stability property for certain radially symmetric deformations. It is worth mentioning that, though our stability results are proved for admissible deformations of $\sla(p)$, they are still valid for any variation for which the second variation formula in Theorem~\ref{th:2ndsla} holds. Moreover, as it is shown in Appendix~\ref{sec:appendix2}, the family of admissible variations of $\sla(p)$ is very large.

We finish the present work with Section~\ref{sec:isoperimetric}, where we come back to the isoperimetric problem inside any sub-Riemannian $3$-space form $M$. Note that, if $M$ is homogeneous, then we can apply the results of Galli and Ritor\'e~\cite{galli-ritore} to ensure existence and boundedness of isoperimetric regions for any given volume. Unfortunately, the regularity of the minimizers is still an open question. However, it is clear that the boundary of a minimizer must be volume-preserving area-stationary. In particular, if $\Om$ is a $C^2$ isoperimetric region in the model spaces $\e$ with $\kappa\leq 0$, then we can invoke Theorem~\ref{th:alexandrov} to deduce that $\ptl\Om$ must be one of the spherical surfaces $\sla(p)$. This extends to the sub-Riemannian hyperbolic models the uniqueness of $C^2$ solutions in $\mm(0)$ proved in \cite{rr2}. In the spherical models $\e$ with $\kappa>0$ the same conclusion holds for $C^2$ isoperimetric regions contained inside an open hemisphere of $\mathbb{S}^3$. In \cite{hr3} we use the second variation formula to discard the existence in $\e$ with $\kappa>0$ of stable tori under a volume constraint, so that, for arbitrary $C^2$ isoperimetric regions, we get the same uniqueness result as in $\e$ with $\kappa\leq 0$. From this we may suggest that the spheres $\sla(p)$ uniquely minimize the perimeter under a volume constraint in $\e$. This is a generalization of Pansu's conjecture in the Heisenberg group $\mm(0)$. Indeed, our stability result in Theorem~\ref{th:stability} provides further evidence supporting this extended conjecture. On the other hand, in a homogeneous space form $M$ of non-trivial topology, a sphere $\sla(p)$ need not bound a minimizer. This is shown in Example~\ref{ex:mendicuti}, where we find vertical tori inside a Sasakian flat cylinder $\rr^2\times\mathbb{S}^1$ which are isoperimetrically better than the spheres $\sla(p)$ of the same volume. So, an interesting question in this setting is the following: are the topological balls enclosed by $\sla(p)$ isoperimetric solutions for small volumes?

The paper is organized into six sections and two appendices. The second section contains background material about sub-Riemannian $3$-manifolds. In Section~\ref{sec:spheres} we construct the spherical surfaces $\sla(p)$ and show some of their geometric properties. In the fourth section we establish uniqueness results for $\sla(p)$ and describe volume-preserving area-stationary surfaces with non-empty singular set. In Section~\ref{sec:stability} we obtain stability results for admissible variations of $\sla(p)$. Some applications to the isoperimetric problem in sub-Riemannian $3$-space forms are given in Section~\ref{sec:isoperimetric}. The second variation formula is proved in detail in Appendix~\ref{sec:appendix1}. Finally, in Appendix~\ref{sec:appendix2} we produce several examples of admissible variations. 

\noindent
\textbf{Acknowledgements.} The authors wish to thank the referee for his/her detailed report and useful suggestions to improve the presentation of the paper.

\section{Preliminaries}
\label{sec:preliminaries}
\setcounter{equation}{0}

In this section we introduce the notation and establish some results that will be used throughout the paper.

\subsection{Sasakian sub-Riemannian $3$-manifolds}
\label{subsec:ssR3m}
\noindent

A \emph{contact sub-Riemannian manifold} is a connected manifold $M$ with empty boundary together with a Riemannian metric $g_h$ defined on an oriented contact distribution $\h$, which we refer to as the \emph{horizontal distribution}. We denote by $J$ the orientation-preserving $90$ degree rotation in $(\h,g_h)$. This is a contact structure on $\h$ since $J^2=-\text{Id}$. 

The \emph{normalized form} is the contact $1$-form $\eta$ on $M$ such that $\text{Ker}(\eta)=\h$ and the restriction of the $2$-form $d\eta$ to $\h$ equals the area form on $\h$. We shall always choose the orientation of $M$ induced by the $3$-form $\eta\wedge d\eta$. The \emph{Reeb vector field} associated to $\eta$ is the vector field $T$ transversal to $\h$ defined by $\eta(T)=1$ and $d\eta(T,U)=0$, for any $U$. We extend $J$ to the tangent space to $M$ by setting $J(T):=0$. A vector field $U$ is \emph{horizontal} if it coincides with its projection $U_h$ onto $\h$. If $U$ is always proportional to $T$ then we say that $U$ is \emph{vertical}. 

The \emph{canonical extension} of $g_h$ is the Riemannian metric $g=\escpr{\cdot\,,\cdot}$ on $M$ for which $T$ is a unit vector field orthogonal to $\h$. For this metric it is easy to check that
\begin{equation}
\label{eq:conmute}
\escpr{J(U),V}+\escpr{U,J(V)}=0,
\end{equation}
for any $U$ and $V$. We say that $M$ is \emph{complete} if $(M,g)$ is a complete Riemannian manifold. 

By a \emph{local isometry} between contact sub-Riemannian $3$-manifolds we mean a smooth ($C^\infty$) map $\phi:M\to M'$ whose differential at any $p\in M$ is an orientation-preserving linear isometry from $\h_p$ to $\h'_{\phi(p)}$. These maps preserve the normalized forms, the Reeb vector fields, the associated complex structures and the canonical extensions of the sub-Riemannian metrics. An \emph{isometry} is a local isometry which is also a diffeomorphism. 
A contact sub-Riemannian $3$-manifold $M$ is \emph{homogeneous} if the group $\text{Iso}(M)$ of isometries of $M$ acts transitively on $M$. Such a manifold must be complete. We say that two subsets $S_1$ and $S_2$ of $M$ are \emph{congruent} if there is $\phi\in\text{Iso}(M)$ such that $\phi(S_1)=S_2$.

By a \emph{Sasakian sub-Riemannian $3$-manifold} we mean a contact sub-Riemannian $3$-manifold $M$ where $g_h$ is a \emph{Sasakian metric}: this means that any diffeomorphism of the one-parameter group associated to $T$ is an isometry. This implies, see \cite[p.~67, Cor.~6.5 and Thm.~6.3]{blair}, that the Levi-Civit\`a connection $D$ associated to $g$ satisfies the equalities
\begin{align}
\nonumber
D_UT&=J(U),
\\
\label{eq:dujv}
D_U\left(J(V)\right)&=J(D_UV)+\escpr{V,T}\,U-\escpr{U,V}\,T,
\end{align}
for any vector fields $U$ and $V$ on $M$. In particular, any integral curve of $T$ is a geodesic in $(M,g)$ parameterized by arc-length. We refer to these curves as \emph{vertical geodesics}.

Now we recall some curvature properties of a Sasakian $3$-manifold. The curvature tensor in $(M,g)$ is defined by
\[
R(U,V)W:=D_{V}D_{U}W-D_{U}D_{V}W+D_{[U,V]}W,
\]
where $[U,V]$ is the Lie bracket. From \cite[Prop.~7.3]{blair} and the fact that $\escpr{U,T}=\eta(U)$, we have
\begin{equation}
\label{eq:ruvt}
R(U,V)T=\escpr{U,T}\,V-\escpr{V,T}\,U.
\end{equation}

The \emph{Webster scalar curvature} $K$ of $M$ is the sectional curvature of $\h$ with respect to the Tanaka connection \cite[Sect.~10.4]{blair}. If $K_h$ denotes the sectional curvature of $\h$ in $(M,g)$, then
\[
K=\frac{1}{4}\,\big(K_h+3\big),
\]
which is clearly invariant under local isometries. The following formulas valid for $U$ horizontal and $V$ arbitrary were proved in \cite[Lem.~2.1]{rosales}
\begin{align}
\label{eq:ruvu}
R(U,V)U&=(4K-3)\,\escpr{V,J(U)}\,J(U)+|U|^2\,\escpr{V,T}\,T,
\\
\label{eq:ricvv}
\text{Ric}(V,V)&=(4K-2)\,|V_h|^2+2\,\escpr{V,T}^2,
\end{align}
where $\text{Ric}$ is the Ricci curvature in $(M,g)$ defined, for any two vectors fields $U$ and $V$, as the trace of the map $W\mapsto R(U,W)V$. 

Let $M$ be a contact sub-Riemannian $3$-manifold. The \emph{homothetic deformation} of $g_h$ is the one-parameter family of sub-Riemannian metrics $\{(g_h)_\eps\}_{\eps>0}$ on $\h$ defined by
\begin{equation}
\label{eq:homothetic}
(g_h)_\eps:=\eps^2 g_h.
\end{equation}
This induces new structures of contact sub-Riemannian $3$-manifold on $M$, see \cite[p.~103]{blair} and \cite{tanno2}. The normalized form $\eta_\eps$, the Reeb vector field $T_\eps$, the complex structure $J_\eps$ and the canonical extension $g_\eps$ are, respectively, given by $\eta_\eps=\eps^2\,\eta$, $T_\eps=T/\eps^2$, $J_\eps=J$, and
\begin{equation}
\label{eq:ga}
g_\eps(U,V)=\eps^2\,g(U,V)+\eps^2\,(\eps^2-1)\,g(U,T)\,g(V,T).
\end{equation}
It is easy to check that any isometry for $g_h$ is also an isometry for $(g_h)_\eps$. Therefore $(g_h)_\eps$ is a Sasakian metric if and only if the same holds for $g_h$. Moreover, in such a case, we obtain
\[
K_\eps=\frac{K}{\eps^2},
\]
where $K_\eps$ is the Webster scalar curvature for $(g_h)_\eps$.

\subsection{Three-dimensional space forms}
\label{subsec:sf}
\noindent

For $\kappa=-1,0,1$, we denote by $\m$ the complete, simply connected, Riemannian surface of constant sectional curvature $4\kappa$ described as follows. If $\kappa=1$, then $\mathbb{N}(1)$ is the unit sphere $\mathbb{S}^2$ with its standard Riemannian metric scaled by  $1/4$. If $\kappa=-1,0$, then $\m:=\{p\in\rr^2\,;\,|p|<1/|\kappa|\}$ endowed with the Riemannian metric $\rho^2\,(dx^2+dy^2)$, where $\rho(x,y):=(1+\kappa(x^2+y^2))^{-1}$. Hence $\mathbb{N}(-1)$ is the Poincar\'e model of the hyperbolic plane and $\mathbb{N}(0)$ is the Euclidean plane. 

For $\kappa=-1,0$ we denote $\e:=\m\times\rr$. If $\kappa=1$, then we set $\mathbb{M}(1)=\mathbb{S}^3$. The space $\e$ admits a structure of complete Sasakian sub-Riemannian manifold of Webster scalar curvature $\kappa$, see \cite[Sect.~2.2]{rosales} and \cite[Sect.~7.4]{blair} for an explicit description. The sub-Riemannian metric $g_h$ in $\e$ is of \emph{bundle type} in the sense of \cite[p.~18]{montgomery}. In fact, there is a Riemannian submersion $\mathcal{F}:\e\to\m$ for which $\h=(\text{Ker}(d\mathcal{F}))^\bot$. Here the orthogonal complement is taken with respect to the canonical extension $g$ of $g_h$. The map $\mathcal{F}$ is the Euclidean projection $\mathcal{F}(x,y,t):=(x,y)$ if $\kappa=-1,0$, and the Hopf fibration $\mathcal{F}(x_1,y_1,x_2,y_2):=(x^2_{1}+y^2_{1}-x_{2}^2-y_{2}^2,2\,(x_{2}y_{1}-x_{1}y_{2}),2\,(x_{1}x_{2}+y_{1}y_{2}))$ if $\kappa=1$. 
We shall always consider the orientation on $\m$ for which the restriction of $d\mathcal{F}$ to $\h$ is an orientation-preserving isometry. The oriented fiber of $\mathcal{F}$ through $p$ is the vertical geodesic in $(\e,g)$ parameterized by $p+t\,T_p$ if $\kappa=-1,0$ or $e^{it}p$ if $\kappa=1$. We call \emph{vertical axis} the fiber through $o:=(0,0,0)$ if $\kappa\leq 0$ or through $o:=(1,0,0,0)$ if $\kappa>0$. The diffeomorphisms associated to $T$ are translations along the fibers. By means of the homothetic deformation in \eqref{eq:homothetic} we can construct, for arbitrary $\kappa\in\rr$, the Sasakian space $\e$ of constant Webster scalar curvature $\kappa$.

In $\e$ there is a product $*$ such that $(\e,*)$ is a Lie group. More precisely, we get the Heisenberg group when $\kappa=0$, the group of unit quaternions when $\kappa>0$, and the universal covering of the special linear group $\text{SL}(2,\rr)$ when $\kappa<0$. The associated left translations (resp. right translations) are isometries of $\e$  when $\kappa\leq 0$ (resp. $\kappa>0$). Hence the space $\e$ is homogeneous and, in particular, complete. Any Euclidean vertical rotation (a rotation about the vertical axis) is also an isometry of $\e$. 

By a result of Tanno~\cite{tanno} the space $\e$ is, up to isometries, the unique complete, simply connected, Sasakian sub-Riemannian $3$-manifold with constant Webster scalar curvature $\kappa$. This result together with a standard argument involving Riemannian coverings (see the proof of Prop.~4.3 in \cite[Chap.~8]{dcriem} for details) allows to deduce the following fact. 

\begin{proposition}
\label{prop:model}
If $M$ is a complete Sasakian sub-Riemannian $3$-manifold of constant Webster scalar curvature $\kappa$, then there is a surjective local isometry $\Pi:\e\to M$. Moreover, there is a subgroup $G$ of isometries of $\e$ such that $M$ is isometric to the quotient $\e/G$, and the fundamental group of $M$ is isomorphic to $G$. 
\end{proposition}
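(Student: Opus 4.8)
The plan is to lift the sub-Riemannian structure to the universal Riemannian cover of $M$, invoke Tanno's uniqueness theorem there, and then descend to realize $M$ as a quotient of $\e$.

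First I would take the universal Riemannian covering $\pi:\widetilde{M}\to M$ associated with the canonical extension $g$. Since $M$ is complete, $\widetilde{M}$ is a complete, simply connected Riemannian $3$-manifold and $\pi$ is a Riemannian local isometry. As $\pi$ is a local diffeomorphism, the contact sub-Riemannian data of $M$ pull back to $\widetilde{M}$: I set $\widetilde{\h}_x:=(d\pi_x)^{-1}(\h_{\pi(x)})$, equip $\widetilde{\h}$ with the pullback of $g_h$, orient it so that $\pi$ is orientation-preserving on the horizontal distribution, and pull back the complex structure $J$. Since the Reeb field $T$ is unit and orthogonal to $\h$ in $(M,g)$, its lift $\widetilde T$ is unit and orthogonal to $\widetilde\h$ for the covering metric; hence the canonical extension of the lifted $\widetilde{g_h}$ coincides with $g$, and the completeness of $(\widetilde M,g)$ gives completeness in the sub-Riemannian sense. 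Because the Sasakian condition is local — it is encoded by the identities \eqref{eq:dujv} for the Levi-Civit\`a connection — and the Webster scalar curvature is invariant under local isometries, $\widetilde{M}$ becomes a complete, simply connected Sasakian sub-Riemannian $3$-manifold of constant Webster scalar curvature $\kappa$.

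At this stage Tanno's theorem applies and yields a sub-Riemannian isometry $\Phi:\e\to\widetilde{M}$. Composing, the map $\Pi:=\pi\circ\Phi:\e\to M$ is a surjective local isometry, which settles the first assertion. For the quotient description I would let $\Delta$ denote the group of deck transformations of $\pi$; classical covering theory gives $\Delta\cong\pi_1(M)$, $M\cong\widetilde{M}/\Delta$, and each element of $\Delta$ is a Riemannian isometry of $(\widetilde{M},g)$. Conjugating by $\Phi$, I would set $G:=\{\Phi^{-1}\circ\sigma\circ\Phi:\sigma\in\Delta\}$, so that $\Phi$ descends to an isometry $\e/G\cong\widetilde{M}/\Delta\cong M$ and $G\cong\Delta\cong\pi_1(M)$.

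The one point that requires genuine verification — and which I expect to be the main obstacle — is that the elements of $\Delta$, and hence of $G$, are isometries in the sub-Riemannian sense of this paper, namely orientation-preserving isometries of the horizontal distribution rather than merely Riemannian isometries of $(\widetilde M,g)$. This holds because any deck transformation $\sigma$ satisfies $\pi\circ\sigma=\pi$; differentiating gives $d\pi\circ d\sigma=d\pi$, so $\sigma$ maps $\widetilde{\h}$ to $\widetilde{\h}$, preserves $\widetilde{g_h}$, and respects the orientation on $\widetilde{\h}$ — all three structures being defined through $\pi$ itself, which $\sigma$ fixes. Thus $G$ is indeed a subgroup of $\text{Iso}(\e)$ with the required properties, completing the argument.
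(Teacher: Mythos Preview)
Your proposal is correct and follows precisely the approach the paper indicates: the paper does not give a detailed proof but simply invokes Tanno's uniqueness theorem together with ``a standard argument involving Riemannian coverings'' (citing do~Carmo, Chap.~8, Prop.~4.3), and your write-up is exactly that standard argument spelled out, including the necessary check that deck transformations are sub-Riemannian (not merely Riemannian) isometries.
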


In the sequel, by a \emph{$3$-dimensional space form} we mean a complete Sasakian sub-Riemannian $3$-manifold of constant Webster scalar curvature. 

\begin{example}
\label{ex:topology}
The previous proposition suggests that one may construct $3$-space forms with non-trivial topology by taking a non-trivial subgroup $G$ of isometries of $\e$ such that the Sasakian structure and the group operation in $\e$ descend to the quotient $M:=\e/G$. In $\e$ with $\kappa\leq 0$ this can be done if we consider the subgroup $G\cong\mathbb{Z}$ of $\text{Iso}(\e)$ given by $G:=\{\phi_n\,;\,n\in\mathbb{Z}\}$, where $\phi_n(p):=p+(2n\pi)\,T_p$. The resulting space form is homeomorphic to the cylinder $\rr^2\times\mathbb{S}^1$. This construction also holds in $\mathbb{M}(\kappa)$ with $\kappa>0$ for the subgroup $G\cong\mathbb{Z}_2$ of $\text{Iso}(\e)$ given by the identity map and the antipodal map in $\mathbb{S}^3$. This leads to a sub-Riemannian model of the $3$-dimensional real projective space $\rr\mathbb{P}^3$.
\end{example} 

For a piecewise smooth curve $\alpha:I\to\m$ we define a \emph{lift} of $\alpha$ as a piecewise smooth curve $\widetilde{\alpha}:I\to\e$ such that $\mathcal{F}\circ\widetilde{\alpha}=\alpha$. From basic properties of principal bundles \cite[p.~88]{kobayashi} there is a unique lift with horizontal tangent vector wherever it exists (\emph{horizontal lift}) and passing through a given point of $\e$. The next lemma states that the holonomy displacement associated to a horizontal lift of a Jordan curve in $\m$ coincides with the area enclosed by the curve. The cases $\kappa=0,1$ are found in \cite[Sect.~1.3]{montgomery} and \cite[Proof of Prop.~1]{pinkall}. In general, the proof follows by adapting the arguments in \cite[Thm.~1, p.~191]{singer-thorpe}.

\begin{lemma}
\label{lem:holonomy}
Let $D\subset\m$ be a disk of area $A$ bounded by a piecewise smooth Jordan curve. Let $\alpha:[0,L]\to\m$ be a parameterization by arc-length of $\ptl D$ which reverses the orientation induced by $D$. If $\widetilde{\alpha}:[0,L]\to\e$ is a horizontal lift of $\alpha$, then the oriented distance between $\widetilde{\alpha}(0)$ and $\widetilde{\alpha}(L)$ along the fiber of $\mathcal{F}$ through $\widetilde{\alpha}(0)$ equals $2A$. 
\end{lemma}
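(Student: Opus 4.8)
The plan is to reduce the statement to Stokes' theorem by lifting the whole disk $D$ into $\e$, and then comparing the boundary of this lifted disk with the horizontal lift $\widetilde{\alpha}$. Since $D$ is contractible, the restriction of the bundle $\mathcal{F}:\e\to\m$ over $D$ is trivial and admits a smooth section $\sigma:D\to\e$ with $\mathcal{F}\circ\sigma=\text{id}_D$. I set $\Sg:=\sigma(D)$, an embedded disk in $\e$ whose boundary is the closed curve $\beta:=\sigma\circ\alpha$, which projects onto $\alpha$. The existence and smoothness of $\widetilde{\alpha}$ and of $\sigma$ are guaranteed by the bundle-theoretic facts already cited.

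The geometric heart of the argument is the identity $\sigma^*(d\eta)=2\,dA$, where $dA$ is the area form of $\m$. To see this, note first that $d\eta(T,\cdot)=0$, so in evaluating $d\eta$ on $d\sigma(v)$ and $d\sigma(w)$ only the horizontal parts survive; since $\mathcal{F}\circ\sigma=\text{id}$, those horizontal parts are precisely the horizontal lifts of $v$ and $w$. A direct computation from the Sasakian identity $D_UT=J(U)$ gives $d\eta(X,Y)=2\,\escpr{J(X),Y}$ for all $X,Y$, and for horizontal vectors $\escpr{J(X),Y}$ equals the area form of $(\h,g_h)$ evaluated on $(X,Y)$. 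As $d\mathcal{F}$ restricts to an orientation-preserving isometry from $\h$ to $T\m$, this horizontal area form corresponds to $dA$, which yields $\sigma^*(d\eta)=2\,dA$. Integrating over $D$ and using Stokes' theorem gives $\int_\beta\eta=\pm\int_{\Sg}d\eta=\pm 2A$, with the sign fixed by the hypothesis that $\alpha$ reverses the orientation induced by $D$.

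It then remains to read off the fiber displacement from $\int_\beta\eta$. Writing $\Phi_s$ for the flow of $T$ (translation along the fibers, which preserves $\h$ and $\eta$), I would express $\beta(\tau)=\Phi_{f(\tau)}(\widetilde{\alpha}(\tau))$ for a smooth function $f$, which is legitimate since $\beta$ and $\widetilde{\alpha}$ lie on the same fiber for each $\tau$. Because $(\Phi_{f(\tau)})_*\widetilde{\alpha}'(\tau)$ is horizontal and $\eta(T)=1$, one obtains $\eta(\beta')=f'$, hence $\int_\beta\eta=f(L)-f(0)$. Since $\beta(L)=\beta(0)$ and $\widetilde{\alpha}(0),\widetilde{\alpha}(L)$ lie on the fiber through $\widetilde{\alpha}(0)$, the difference $f(L)-f(0)$ is exactly the oriented distance from $\widetilde{\alpha}(L)$ to $\widetilde{\alpha}(0)$ along that fiber. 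Combining this with the previous paragraph produces the value $2A$ for the oriented distance from $\widetilde{\alpha}(0)$ to $\widetilde{\alpha}(L)$.

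The step I expect to require the most care is the orientation and sign bookkeeping: one must check that the factor $2$ coming from the structure equations, the orientation of $\m$ induced by $d\mathcal{F}$, the boundary orientation of $\Sg$, and the convention for oriented distance along the fiber all combine so that the orientation-reversing hypothesis on $\alpha$ produces the sign $+2A$ rather than $-2A$. Everything else is either cited (existence of $\widetilde{\alpha}$ and $\sigma$, Stokes' theorem) or is the single quantitative computation of $d\eta$ on horizontal vectors, so the sign analysis is the only genuinely delicate point.
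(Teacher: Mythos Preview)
Your approach is correct and is essentially the standard holonomy argument that the paper defers to: the paper gives no self-contained proof of this lemma, only references (Montgomery for $\kappa=0$, Pinkall for $\kappa=1$, and Singer--Thorpe for the general case). The Singer--Thorpe argument is precisely Stokes' theorem applied to the connection form over a lifted disk, which is what you carry out. So there is nothing to compare---you have supplied the proof the paper omits.

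One normalization point deserves a comment. You compute $d\eta(X,Y)=2\escpr{J(X),Y}$ directly from $D_UT=J(U)$, while the paper's preliminary section asserts that $d\eta|_\h$ equals the area form on $\h$, which would read $\escpr{J(X),Y}$. This apparent factor-of-two discrepancy is a known convention mismatch (Blair's contact-geometry convention for the exterior derivative carries a $\tfrac{1}{2}$), and your choice is the one compatible with the ordinary Stokes formula $\int_\Sg d\eta=\int_{\ptl\Sg}\eta$. Since you derive the factor $2$ from the Sasakian structure equation rather than quoting the paper's normalization, your argument is internally consistent and lands on the correct value $2A$. It would be worth flagging this explicitly so a reader is not confused by the two statements.

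Your own assessment of the proof is accurate: the only genuinely delicate step is the sign bookkeeping. In particular, note that from $\beta(\tau)=\Phi_{f(\tau)}(\widetilde{\alpha}(\tau))$ and $\beta(L)=\beta(0)$ one gets $\widetilde{\alpha}(L)=\Phi_{f(0)-f(L)}(\widetilde{\alpha}(0))$, so the oriented distance from $\widetilde{\alpha}(0)$ to $\widetilde{\alpha}(L)$ is $f(0)-f(L)=-\int_\beta\eta$; matching this with the orientation-reversing hypothesis on $\alpha$ and the induced boundary orientation on $\Sg$ then fixes the sign as $+2A$. This is routine but worth writing out once.
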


\subsection{Carnot-Carath\'eodory geodesics and Jacobi fields}
\label{subsec:ccgeo}
\noindent

Let $M$ be a Sasakian sub-Riemannian $3$-manifold. A \emph{horizontal curve} in $M$ is a $C^1$ curve $\ga$ whose velocity vector $\dot{\ga}$ lies in $\h$. The \emph{length} of $\ga$ over a compact interval $I\sub\rr$ is given by $\int_I|\dot{\ga}(s)|\,ds$, where $|\dot{\ga}(s)|$ denotes the sub-Riemannian length of $\dot{\ga}(s)$. The \emph{Carnot-Carath\'eodory distance} between $p,q\in M$ is the infimum of the lengths of all $C^\infty$ horizontal curves joining $p$ and $q$. Since $\h$ is a bracket generating distribution such curves exist by Chow's connectivity theorem \cite[Sect.~1.2.B]{gromov-cc}. 

Suppose that $\ga$ is a $C^2$ horizontal curve parameterized by arc-length. We say that $\ga$ is a \emph{Carnot-Carath\'eodory geodesic}, or simply a \emph{$CC$-geodesic}, if $\ga$ is a critical point of length under $C^2$ variations by horizontal curves. Following variational arguments, see for example \cite[Prop. 3.1]{rr2}, we deduce that $\ga$ is a CC-geodesic if and only if there is a constant $\la\in\rr$, called the \emph{curvature} of $\ga$, such that $\ga$ satisfies the second order ODE
\begin{equation}
\label{eq:geoeq}
\dot{\ga}'+2\la\,J(\dot{\ga})=0,
\end{equation}
where the prime $'$ stands for the covariant derivative along $\ga$ in $(M,g)$. If $p\in M$ and $v\in\h_p$ with $|v|=1$, then the unique solution $\ga$ to \eqref{eq:geoeq} with $\ga(0)=p$ and $\dot{\ga}(0)=v$ is a CC-geodesic of curvature $\la$ since the functions $\escpr{\dot{\ga},T}$ and $|\dot{\ga}|^2$ are constant along $\ga$. The fact that any CC-geodesic is defined on $\rr$ is equivalent to the completeness of $M$ by \cite[Thm.~1.2]{falbel4}.

The next result shows the behaviour of CC-geodesics with respect to local isometries and the homothetic deformation in \eqref{eq:homothetic}. The proof comes from \eqref{eq:ga} and the Koszul formula \cite[p.~55]{dcriem}.

\begin{lemma}
\label{lem:geohomo}
Let $M$ be a Sasakian sub-Riemannian $3$-manifold and $\ga:I\to M$ a CC-geodesic of curvature $\la$ defined over an open interval $I\subeq\rr$. Then we have 
\begin{itemize}
\item[(i)] If $\phi:M\to M'$ is a local isometry then $\phi\circ\ga$ is a CC-geodesic of curvature $\la$ in $M'$.
\item[(ii)] For any $\eps>0$, the curve $\ga_\eps:I_\eps\to M$ defined on $I_\eps:=\eps I$ by $\ga_\eps(s):=\ga(s/\eps)$ is a CC-geodesic of curvature $\la/\eps$ for the sub-Riemannian metric $(g_h)_\eps:=\eps^2g_h$. 
\end{itemize}
\end{lemma}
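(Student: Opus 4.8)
The plan is to check directly that in each case the transported or rescaled curve obeys the CC-geodesic equation \eqref{eq:geoeq} for the relevant contact structure; no explicit integration of \eqref{eq:geoeq} is needed.

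For (i) I would use that a local isometry $\phi:M\to M'$ preserves the horizontal distribution, the complex structure and, as recalled in Section~\ref{subsec:ssR3m}, the canonical extension of the sub-Riemannian metric; in particular $\phi$ is a local Riemannian isometry from $(M,g)$ to $(M',g')$. Setting $\beta:=\phi\circ\ga$, the curve $\beta$ is $C^2$, horizontal and parameterized by arc-length because $d\phi$ restricts to a linear isometry $\h_p\to\h'_{\phi(p)}$, and $\dot\beta=d\phi(\dot\ga)$. Since a Riemannian isometry intertwines the Levi-Civit\`a connections, covariant differentiation along $\beta$ satisfies $\dot\beta'=d\phi(\dot\ga')$, and since $d\phi\circ J=J'\circ d\phi$ we also have $J'(\dot\beta)=d\phi(J(\dot\ga))$. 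Applying $d\phi$ to \eqref{eq:geoeq} then yields $\dot\beta'+2\la\,J'(\dot\beta)=d\phi\big(\dot\ga'+2\la\,J(\dot\ga)\big)=0$, so $\beta$ is a CC-geodesic of curvature $\la$ in $M'$.

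The substantial part is (ii), where the idea is to relate the Levi-Civit\`a connection $D^\eps$ of $g_\eps$ to the original connection $D$ by means of the Koszul formula and the explicit expression \eqref{eq:ga}. First I would record the elementary facts about the reparameterization: from $\ga_\eps(s)=\ga(s/\eps)$ we get $\dot\ga_\eps=(1/\eps)\,\dot\ga$, and since $\dot\ga$ is horizontal with $|\dot\ga|=1$, formula \eqref{eq:ga} gives $g_\eps(\dot\ga,\dot\ga)=\eps^2$, so $\ga_\eps$ is parameterized by arc-length for $(g_h)_\eps$. The core computation is to evaluate $D^\eps_{\dot\ga}\dot\ga$ along $\ga$. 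I would extend $\dot\ga$ to a horizontal vector field $X$ near the trace of $\ga$, so that $\escpr{X,T}\equiv 0$ as a function. The anisotropic term in \eqref{eq:ga} carries the factor $\escpr{\cdot,T}$, which annihilates $X$, so every cross term in the Koszul formula involving $\escpr{X,T}$ vanishes identically and the expression collapses to $\eps^2$ times the Riemannian Koszul expression for $g$; that is, $g_\eps(D^\eps_X X,W)=\eps^2\,g(D_XX,W)$ for every $W$. Extracting $Y:=D^\eps_X X$ from this identity, again via \eqref{eq:ga}, shows that the horizontal part of $Y$ equals that of $D_XX$ while its vertical part equals $\escpr{D_XX,T}/\eps^2$. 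Since $\ga$ is a CC-geodesic, \eqref{eq:geoeq} gives $D_{\dot\ga}\dot\ga=-2\la\,J(\dot\ga)$, which is horizontal, so the vertical correction drops out and one obtains $D^\eps_{\dot\ga}\dot\ga=-2\la\,J(\dot\ga)$.

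Finally I would assemble the geodesic equation for $\ga_\eps$. By the chain rule for the reparameterization, $D^\eps_{\dot\ga_\eps}\dot\ga_\eps=(1/\eps^2)\,D^\eps_{\dot\ga}\dot\ga=-(2\la/\eps^2)\,J(\dot\ga)$; using $J_\eps=J$ and $\dot\ga=\eps\,\dot\ga_\eps$ this becomes $-(2\la/\eps)\,J_\eps(\dot\ga_\eps)$, so $\ga_\eps$ satisfies $D^\eps_{\dot\ga_\eps}\dot\ga_\eps+2\,(\la/\eps)\,J_\eps(\dot\ga_\eps)=0$, i.e.\ it is a CC-geodesic of curvature $\la/\eps$ for $(g_h)_\eps$. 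The main obstacle is the bookkeeping in the Koszul computation of step (ii): one must choose a horizontal extension of $\dot\ga$, correctly separate the horizontal and vertical components of $D^\eps_X X$, and verify that the contribution of the $(\eps^2-1)$ term in \eqref{eq:ga} to the vertical part disappears on the CC-geodesic. This is precisely where the horizontality of $D_{\dot\ga}\dot\ga$ is decisive; once it is in place, the reparameterization bookkeeping is routine.
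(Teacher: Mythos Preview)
Your proof is correct and follows exactly the route the paper indicates: part (i) uses that a local isometry is a Riemannian isometry for the canonical extensions and hence intertwines the Levi-Civit\`a connections and the complex structures, while part (ii) uses the Koszul formula together with \eqref{eq:ga} to compare $D^\eps$ and $D$ on horizontal fields. Your careful separation of horizontal and vertical components of $D^\eps_X X$ and the observation that the vertical correction vanishes because $D_{\dot\ga}\dot\ga$ is horizontal is precisely the computation the paper leaves to the reader.
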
 

In a model space $\e$ any CC-geodesic of curvature $\la$ is a horizontal lift for the submersion $\mathcal{F}:\e\to\m$ of a curve with geodesic curvature $-2\la$ in $\m$, see \cite[Lem.~3.1]{rosales} and \cite[Thm.~1.26]{montgomery}. In this paper we will only need the following facts about CC-geodesics in $\e$ that can be proved without using their explicit expressions. 

\begin{proposition}
\label{prop:geomodel}
Let $\ga$ be a complete CC-geodesic in $\e$. Then we have
\begin{itemize}
\item[(i)] If $\kappa\leq 0$ then $\ga$ leaves any compact set of $\e$ in finite time.
\item[(ii)] If $\kappa>0$ then $\ga$ cannot be contained inside an open hemisphere of $\mathbb{S}^3$.
\item[(iii)] If $\kappa>0$ and $\ga$ is closed, then the length of $\ga$ is greater than or equal to $2\pi/\sqrt{\kappa}$. Moreover, equality holds if and only if $\ga$ is a horizontal great circle of $\mathbb{S}^3$. 
\end{itemize}
\end{proposition}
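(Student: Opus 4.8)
The plan is to project each CC-geodesic to the base surface $\m$ through the Riemannian submersion $\mathcal{F}:\e\to\m$ and to combine the resulting two-dimensional information with the holonomy identity of Lemma~\ref{lem:holonomy}. By Lemma~\ref{lem:geohomo}(ii) a homothetic deformation rescales simultaneously the length of $\ga$ and the quantity $1/\sqrt{\kappa}$ by the same factor, and leaves invariant the three geometric hypotheses involved (leaving compact sets, lying in a hemisphere, being a horizontal great circle); so it suffices to argue in the normalized cases $\kappa\in\{-1,0\}$ for (i) and $\kappa=1$ for (ii)--(iii). In every case I set $\beta:=\mathcal{F}\circ\ga$. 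Since $\ga$ is horizontal and unit speed and $\h=(\text{Ker}\,d\mathcal{F})^\bot$, the curve $\beta$ is a unit-speed curve of constant geodesic curvature $-2\la$ in the space form $\m$ of curvature $4\kappa$, and $\ga$ is one of its horizontal lifts.

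For (i) I split according to the sign of $\la^2+\kappa$. When $\la^2+\kappa\leq 0$ the geodesic curvature $2|\la|$ of $\beta$ does not exceed the critical value $2\sqrt{-\kappa}$, so $\beta$ is a geodesic, a horocycle or an equidistant curve (a straight line when $\kappa=0$); each of these is a proper, unbounded, unit-speed curve, hence it leaves every compact set of $\m$ in finite time. As $\mathcal{F}(\ga(s))=\beta(s)$ and $\mathcal{F}$ carries compact sets to compact sets, $\ga$ abandons every compact set of $\e$ in finite time. When $\la^2+\kappa>0$ the curve $\beta$ is a circle bounding a disk of positive area $A$; by Lemma~\ref{lem:holonomy} the lift $\ga$ is displaced along the fiber by the fixed amount $2A\neq 0$ after each loop of $\beta$, so the fiber coordinate of $\ga(s)$ in $\e=\m\times\rr$ diverges and again $\ga$ leaves every compact set in finite time.

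For (ii) and (iii), $\mathcal{F}$ is the Hopf fibration, whose fibers are great circles of $\mathbb{S}^3$ of length $2\pi$, and $\beta$ is now a circle on $\m$ enclosing a disk of area $A\in(0,\pi)$, where $\pi$ is the total area of $\m$. If $\la=0$ then \eqref{eq:geoeq} reads $\dot{\ga}'=0$, so $\ga$ is a Riemannian geodesic of the round $\mathbb{S}^3$, i.e.\ a great circle; writing such a circle as $\mathbb{S}^3\cap V$ for a $2$-plane $V$, it meets any equatorial $2$-sphere $\{\escpr{\cdot,a}_{\rr^4}=0\}$ by dimension count, hence cannot lie in an open hemisphere. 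If $\la\neq 0$ I consider the Hermitian inner product $F(s):=\escpr{\ga(s),a}$ of $\mathbb{C}^2\cong\rr^4$, whose real part is the Euclidean height $\escpr{\ga(s),a}_{\rr^4}$ governing the hemisphere $\{\escpr{\cdot,a}_{\rr^4}>0\}$. Since the Reeb flow acts on $\mathbb{S}^3$ by $z\mapsto e^{i\tau}z$, Lemma~\ref{lem:holonomy} yields $\ga(s+P)=e^{2Ai}\,\ga(s)$ (up to the sign of the orientation), where $P$ is the length of $\beta$, whence $F(s+P)=e^{2Ai}F(s)$ with $2A\in(0,2\pi)$. Thus a continuous argument of $F$ grows without bound, $F$ winds around the origin, and $\operatorname{Re}F=\escpr{\ga,a}_{\rr^4}$ attains non-positive values; so $\ga$ stays in no open hemisphere, which settles (ii) for closed and non-closed geodesics alike.

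Finally, for (iii) let $\ga$ be closed. Then $\beta$ is traversed an integer number $n\geq 1$ of times and the accumulated holonomy $2nA$ must be a multiple of the fiber length $2\pi$, so $nA=m\pi$ with $1\leq m\leq n-1$ (because $0<A<\pi$). Using the elementary spherical relation $\text{length}(\beta)=2\sqrt{A(\pi-A)}$ for a cap of area $A$, I obtain $\text{length}(\ga)=n\,\text{length}(\beta)=2\pi\sqrt{m(n-m)}\geq 2\pi$, with equality forcing $m(n-m)=1$, i.e.\ $(m,n)=(1,2)$ and $A=\pi/2$; then $\beta$ is a great circle and $\ga$ is a horizontal great circle, which conversely has length $2\pi=2\pi/\sqrt{\kappa}$. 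The step I expect to be most delicate is the uniform argument in (ii): the density argument on the Hopf torus fails exactly for closed geodesics, and what rescues the proof is the quasi-periodicity $F(s+P)=e^{2Ai}F(s)$ together with the sharp fact that $0<A<\pi$, so that $e^{2Ai}\neq 1$ and the genuine winding of $F$ is guaranteed in every case.
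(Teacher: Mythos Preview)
Your proof is correct. The overall architecture matches the paper's: reduce to $\kappa\in\{-1,0,1\}$ via homothety, project $\ga$ to a constant-curvature curve $\beta$ in $\m$, and exploit the holonomy identity of Lemma~\ref{lem:holonomy}. Parts (i) and (iii) are close variants of the paper's arguments (your case split in (i) by the sign of $\la^2+\kappa$ is in fact a little cleaner, and in (iii) you use the exact cap formula $L_0=2\sqrt{A(\pi-A)}$ where the paper invokes the isoperimetric inequality, which is an equality for circles anyway).

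Part (ii) is where you take a genuinely different route. The paper fixes the Hopf fiber $\Ga$ through $\ga(0)$, observes that $\ga\cap\Ga$ consists of points evenly spaced at distance $2A_0\leq\pi$ along $\Ga$, and notes that an open hemisphere meets $\Ga$ in an open arc of length $\pi$; such an arc cannot contain all these equally spaced points, giving the contradiction. Your argument instead uses the Hermitian height $F(s)=\escpr{\ga(s),a}_{\mathbb{C}^2}$ and the quasi-periodicity $\ga(s+P)=e^{2Ai}\ga(s)$. If $F$ ever vanishes you are already on the equator; otherwise a continuous argument of $F$ increases by $2A+2\pi k$ each period, and since $A\in(0,\pi)$ this increment is never zero, so the argument is unbounded and $\operatorname{Re}F$ must change sign. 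The paper's approach is more elementary (it only uses the spacing of points on a circle), while yours packages the same holonomy information into a winding number and handles the rational and irrational cases uniformly. Both are short and self-contained.
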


\begin{proof}
For any $\kappa\neq 0$ we can apply Lemma~\ref{lem:geohomo} (ii) to get that a CC-geodesic of curvature $\lambda$ and length $L$ in $\e$ is a reparameterization of a CC-geodesic of curvature $\lambda/\sqrt{|\kappa|}$ and length $L\,\sqrt{|\kappa|}$ in $\mathbb{M}(\kappa/|\kappa|)$. So, we only have to prove the claim for $\kappa=-1,0,1$. After a translation in the Lie group $(\e,*)$ we can suppose that $\ga(0)$ is the identity element $o$. 
By changing the orientation of $\ga$ we can admit that $\la\geq 0$. 

We first treat the case $\kappa=-1,0$. If $\la=0$ then $\ga$ is a horizontal Riemannian geodesic, i.e., a straight line containing $o$ inside the plane $t=0$. If $\la\neq 0$ then we know from \cite[Lem.~3.1]{rosales} that $\ga$ is a horizontal lift for the submersion $\mathcal{F}:\e\to\m$ of an arc $c_\la$ of a circle containing $o$. This arc meets $\ptl\m$ when $\kappa=-1$ and $|\la|\leq 1$. For any $z\in c_\la$, $z\neq o$, let $\Om_z\subset\m$ be the circular sector bounded by the arc $\widehat{oz}$ and the segment $\overline{oz}$. Then, the horizontal lift of $\widehat{oz}$ through $o$ is contained in $\ga$, and we deduce by Lemma~\ref{lem:holonomy} that the $t$-coordinate of the point of $\ga$ projecting onto $z$ is proportional to the area enclosed by $\Om_z$. Statement (i) then follows since this area tends to $\infty$ when the arc-length of $\widehat{oz}$ goes to $\infty$.

Suppose now that $\kappa=1$. Again by \cite[Lem.~3.1]{rosales} we can find a circle $c_\la$ in $\mathbb{N}(1)$, bounding a disk of area $A_0\leq\pi/2$, and such that $\ga$ is the horizontal lift of $c_\la$ through $o$. Let $\Ga$ be the oriented fiber of $\mathcal{F}$ containing $o$. From Lemma~\ref{lem:holonomy} the intersection $\ga\cap\Ga$ is a family of  points at a constant distance $2A_0\leq\pi$. Assume that $\ga$ is contained in an open hemisphere $E^+$ associated to a linear hyperplane $E$ of $\rr^4$. It is easy to check that $\Ga\cap E$ consists of two different points at distance $\pi$ along $\Ga$. Hence, it would be possible to find $p\in\ga\cap\Ga$ such that $p\in E^-$, a contradiction. This proves (ii). Finally, note that $\ga$ is closed if and only if there are $n,m\in\mathbb{N}$ such that $nA_0=m\pi$. From here we get $n\geq 2m$ since $A_0\leq\pi/2$. Moreover, we can take $n$ in such a way that the lengths $L$ and $L_0$ of $\ga$ and $c_\la$ satisfy $L=nL_0$. Hence the isoperimetric inequality in the surface $\mathbb{N}(1)$, which is a $2$-sphere of curvature $4$, implies 
\[
L^2=n^2L_0^2\geq n^2\,(4\pi A_0-4A_0^2)=4\pi^2m\,(n-m)\geq 4\pi^2m^2\geq4\pi^2,
\]
with equality if and only if $m=1$, $n=2$ and $A_0=\pi/2$, i.e., $\ga$ is a horizontal great circle. 
\end{proof}

It was proved in \cite[Lem.~3.5]{rr2} that the infinitesimal vector field associated to a $C^2$ family of CC-geodesics with the same curvature in the Heisenberg group $\mathbb{M}(0)$ satisfies a second order ODE analogous to the Jacobi equation in Riemannian geometry. The following result in \cite[Lem.~3.3]{rosales} establishes the same fact  under weaker regularity conditions in arbitrary Sasakian $3$-manifolds. 

\begin{lemma}
\label{lem:ccjacobi}
Let $M$ be a Sasakian sub-Riemannian $3$-manifold. We consider a $C^1$ curve $\alpha:I\to M$ defined on some open interval $I\subseteq\rr$, and a $C^1$ unit horizontal vector field $U$ along $\alpha$. For fixed $\lambda\in\rr$, suppose that we have a well-defined map $F:I\times I'\to M$ given by $F(\eps,s):=\ga_{\eps}(s)$, where $I'$ is an open interval containing the origin and $\ga_{\eps}(s)$ is the CC-geodesic of curvature $\la$ with initial conditions $\ga_{\eps}(0)=\alpha(\eps)$ and $\dot{\ga}_{\eps}(0)=U(\eps)$. Then, the vector field $V_{\eps}(s):=(\ptl F/\ptl\eps)(\eps,s)$ satisfies the following properties:
\begin{itemize}
\item[(i)] $V_\eps$ is $C^\infty$ along $\ga_\eps$ with $[\dot{\ga}_\eps,V_\eps]=0$,
\item[(ii)] the function $\la\,\escpr{V_\eps,T}+\escpr{V_\eps,\dot{\ga}_\eps}$
is constant along $\ga_\eps$, 
\item[(iii)] $V_\eps$ satisfies the $CC$-Jacobi equation
\[
V_{\eps}''+R(\dot{\ga}_{\eps},V_{\eps})\dot{\ga}_{\eps}
+2\la\,\big(J(V'_{\eps})-\escpr{V_{\eps},\dot{\ga}_{\eps}}\,T\big)=0, 
\]
where $R$ is the curvature tensor in $(M,g)$ and the prime $'$ denotes the covariant derivative along the geodesic $\ga_{\eps}$,  
\item[(iv)] the vertical component of $V_\eps$ satisfies
\begin{align}
\label{eq:jacobi1}
\escpr{V_\eps,T}'&=2\,\escpr{V_\eps,J(\dot{\ga}_\eps)},
\\
\label{eq:jacobi2}
\escpr{V_\eps,T}'''&+4\,(\lambda^2+K)\,\escpr{V_\eps,T}'=0,
\end{align}
where $K$ is the Webster scalar curvature of $M$ and $'$ is the derivative with respect to $s$.  
\end{itemize}
\end{lemma}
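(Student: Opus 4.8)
The plan is to prove the four items in order, using the Sasakian structure equations \eqref{eq:dujv}, the curvature formula \eqref{eq:ruvu}, and the geodesic equation \eqref{eq:geoeq}. Throughout I abbreviate $\ga:=\ga_\eps$, $V:=V_\eps$ and exploit that each $\ga$ is horizontal and unit-speed, so $\escpr{\dot\ga,T}=0$ and $|\dot\ga|^2=1$. For (i), note that the flow $\Phi_s(q,v)$ solving \eqref{eq:geoeq} with $\ga(0)=q$, $\dot\ga(0)=v$ is $C^\infty$ in $(s,q,v)$; since $F(\eps,s)=\Phi_s(\alpha(\eps),U(\eps))$, for fixed $\eps$ the field $V(s)=d\Phi_s\big(\alpha'(\eps),U'(\eps)\big)$ is the image of a fixed vector under the smooth linearized flow, hence $C^\infty$ in $s$ even though $\alpha,U$ are only $C^1$. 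The bracket vanishes because $\dot\ga$ and $V$ are the $dF$-images of the commuting fields $\ptl/\ptl s$, $\ptl/\ptl\eps$; consequently $D_{\dot\ga}V=D_V\dot\ga$, which I use repeatedly.

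Next come the first-order identities. Differentiating $\escpr{\dot\ga,T}=0$ along $V$, and using $D_VT=J(V)$ from \eqref{eq:dujv}, $D_V\dot\ga=V'$, and the skew-symmetry \eqref{eq:conmute}, yields $\escpr{V',T}=\escpr{V,J(\dot\ga)}$; adding $\escpr{V,T'}=\escpr{V,J(\dot\ga)}$ (since $T'=J(\dot\ga)$) gives the first line of (iv). For (ii), differentiating $|\dot\ga|^2=1$ along $V$ gives $\escpr{V',\dot\ga}=0$, and then the geodesic equation $\dot\ga'=-2\la\,J(\dot\ga)$ together with the first line of (iv) makes the derivative of $\la\,\escpr{V,T}+\escpr{V,\dot\ga}$ cancel termwise.

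For (iii), the vanishing bracket lets me write $V''=D_{\dot\ga}D_V\dot\ga$, and the definition of $R$ gives $R(\dot\ga,V)\dot\ga=D_VD_{\dot\ga}\dot\ga-V''$. Substituting $D_{\dot\ga}\dot\ga=-2\la\,J(\dot\ga)$ and applying \eqref{eq:dujv} once more, with $\escpr{\dot\ga,T}=0$, turns $D_VD_{\dot\ga}\dot\ga$ into $-2\la\big(J(V')-\escpr{V,\dot\ga}\,T\big)$; rearranging produces exactly the CC-Jacobi equation.

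The computational crux is the third-order scalar identity in (iv), which I would treat in the orthonormal frame $\{\dot\ga,J(\dot\ga),T\}$. Writing $f:=\escpr{V,T}$, $b:=\escpr{V,J(\dot\ga)}$, $a:=\escpr{V,\dot\ga}$ and $c:=\escpr{V',J(\dot\ga)}$, the preceding steps yield $f'=2b$, $a'=-2\la\,b$ (from (ii)), and $V'=c\,J(\dot\ga)+b\,T$ (since $\escpr{V',\dot\ga}=0$ and $\escpr{V',T}=b$). The only new ingredient is $c'$: differentiating $c$, inserting the CC-Jacobi equation for $V''$, and using \eqref{eq:ruvu} in the form $R(\dot\ga,V)\dot\ga=(4K-3)\,b\,J(\dot\ga)+f\,T$, gives $c'=-(4K-2)\,b$. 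Feeding $f''=2c+4\la\,a-2f$ forward to $f'''=2c'+4\la\,a'-2f'$ then collapses everything to $f'''=-8(\la^2+K)\,b=-4(\la^2+K)\,f'$, the desired equation. I expect this last bookkeeping of three derivatives and the curvature term to be the only laborious part; every other step is a direct consequence of the Sasakian identities and the commuting bracket.
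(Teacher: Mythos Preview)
Your argument is correct in every detail; each of the scalar identities you write down checks out against the Sasakian formulas \eqref{eq:dujv}, \eqref{eq:conmute}, \eqref{eq:ruvu} and the geodesic equation \eqref{eq:geoeq}, and the bookkeeping for $f'''$ closes exactly as you say. Note that the paper does not actually prove this lemma here but cites \cite[Lem.~3.3]{rosales}; your direct computation in the moving frame $\{\dot\ga,J(\dot\ga),T\}$ is the natural route and is essentially what one finds in that reference.
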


\begin{remark}
\label{re:ricase}
Following \cite[Lem.~2.1]{hrr} we can prove similar properties for the vector field $V_\eps(s):=(\ptl F/\ptl\eps)(\eps,s)$ associated to a variation $F(\eps,s):=\ga_\eps(s)$, where $\ga_\eps(s)$ is the Riemannian geodesic in $(M,g)$ with $\ga_\eps(0)=\alpha(\eps)$ and $\dot{\ga}_\eps(0)=U(\eps)$. In this case, the second order equation in Lemma~\ref{lem:ccjacobi} (iii) becomes the classical Jacobi equation
\begin{equation}
\label{eq:rijacobi}
V_{\eps}''+R(\dot{\ga}_{\eps},V_{\eps})\dot{\ga}_{\eps}=0.
\end{equation}
\end{remark}

If $\ga$ is a CC-geodesic in $M$ then we define a \emph{CC-Jacobi field} along $\ga$ as a solution to the CC-Jacobi equation in Lemma~\ref{lem:ccjacobi} (iii). If the Webster scalar curvature of $M$ is constant along $\ga$ then, an easy integration from \eqref{eq:jacobi2}, gives us the following result.

\begin{lemma}
\label{lem:jacobicon}
Let $\ga(s)$ be a $CC$-geodesic of curvature $\la$ in a Sasakian sub-Riemannian $3$-manifold $M$. Let $V$ be the $CC$-Jacobi field associated to a variation of $\ga$ by $CC$-geodesics of curvature $\la$ as in Lemma~\ref{lem:ccjacobi}. Suppose that the Webster scalar curvature of $M$ is a constant $\kappa$ along $\ga$. If we denote $\tau:=4\,(\la^2+\kappa)$ and $v(s):=\escpr{V,T}(s)$, then we have
\begin{itemize}
\item[(i)] For $\tau<0$, 
\[
v(s)=\frac{1}{\sqrt{-\tau}}\left(a\,\sinh(\sqrt{-\tau}\,s)+b\,\cosh(\sqrt{-\tau}\,s)\right)+c,
\]
where $a:=v'(0)$, $b:=(1/\sqrt{-\tau})\,v''(0)$ and $c:=v(0)+(1/\tau)\,v''(0)$.
\item[(ii)] For $\tau=0$, 
\[
v(s)=as^2+bs+c,
\]
where $a=(1/2)\,v''(0)$, $b=v'(0)$ and $c=v(0)$.
\item[(iii)] For $\tau>0$,   
\[
v(s)=\frac{1}{\sqrt{\tau}}\,\big(a\,\sin(\sqrt{\tau}\,s)-b\,\cos(\sqrt{\tau}\,s)\big)+c,
\]
where $a=v'(0)$, $b=(1/\sqrt{\tau})\,v''(0)$ and $c=v(0)+(1/\tau)\,v''(0)$.
\end{itemize}
\end{lemma}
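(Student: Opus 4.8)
The plan is to reduce the statement to an elementary integration of a constant-coefficient linear ODE. Since the Webster scalar curvature $K$ equals the constant $\kappa$ along $\ga$, equation \eqref{eq:jacobi2} of Lemma~\ref{lem:ccjacobi} reads, after substituting $\tau:=4\,(\la^2+\kappa)$ and $v:=\escpr{V,T}$,
\[
v'''+\tau\,v'=0,
\]
a third order linear ODE with constant coefficients. The entire content of the lemma is then to write down the general solution of this equation together with the precise dependence of its three integration constants on the initial data $v(0)$, $v'(0)$ and $v''(0)$.

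First I would lower the order by setting $w:=v'$, so that $w$ solves the second order equation $w''+\tau\,w=0$. This is the classical (anti-)harmonic oscillator equation, whose solution splits into exactly the three announced cases according to the sign of $\tau$: for $\tau>0$ one has a combination of $\cos(\sqrt{\tau}\,s)$ and $\sin(\sqrt{\tau}\,s)$; for $\tau<0$ one replaces these by $\cosh(\sqrt{-\tau}\,s)$ and $\sinh(\sqrt{-\tau}\,s)$; and for $\tau=0$ one simply gets a linear function $w(s)=v''(0)\,s+v'(0)$. Integrating $w$ once recovers $v$, the new integration constant being precisely the additive term $c$ appearing in each of the three formulas. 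It then remains to express the coefficients of $v$ in terms of the prescribed initial values. For this I would evaluate $v$, $v'=w$ and $v''=w'$ at $s=0$: the identity $v'(0)=w(0)$ fixes the coefficient multiplying the odd function, $v''(0)=w'(0)$ fixes the coefficient multiplying the even function, and $v(0)$ then determines $c$ once the other two are known. Carrying out this bookkeeping case by case yields exactly the stated values $a=v'(0)$, $b=(1/\sqrt{|\tau|})\,v''(0)$ and $c=v(0)+(1/\tau)\,v''(0)$, with the obvious polynomial modification when $\tau=0$.

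There is no genuine obstacle in this argument, as it is routine once \eqref{eq:jacobi2} is available; the only point requiring a little care is the matching of constants. The chosen normalization places a factor $1/\sqrt{|\tau|}$ in front of the oscillatory (or hyperbolic) part and a $1/\tau$ correction inside $c$, so in the hyperbolic case $\tau<0$, where $-\tau>0$, one must track the signs carefully to obtain $c=v(0)+(1/\tau)\,v''(0)$ and not its opposite. Beyond this minor computation the result is immediate.
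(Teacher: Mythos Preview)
Your proposal is correct and follows exactly the approach indicated in the paper, which simply remarks that the lemma follows by ``an easy integration from \eqref{eq:jacobi2}'' once $K\equiv\kappa$ makes the coefficient $\tau=4(\la^2+\kappa)$ constant. Your reduction via $w:=v'$ and the subsequent matching of the constants $a$, $b$, $c$ to the initial data is precisely the elementary computation the paper leaves to the reader.
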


\begin{remark}
\label{re:mm}
Previous work on the CC-geodesics equation, the second variation of length and CC-Jacobi fields in contact sub-Riemannian $3$-manifolds is found in \cite{hughen} and \cite{rumin}. In \cite{chanillo-yang}, Chanillo and Yang studied the CC-Jacobi fields associated to $C^2$ families of CC-geodesics of the same curvature leaving from the same point. 
\end{remark}

\subsection{Surfaces and singular set}
\label{subsec:surfaces}
\noindent

Let $M$ be a contact sub-Riemannian $3$-manifold and $\Sg$ a $C^1$ surface immersed in $M$. Unless explicitly stated we shall always assume that $\ptl\Sg=\emptyset$. The \emph{singular set} of $\Sg$ is the set $\Sg_0:=\{p\in\Sg\,;\,T_p\Sg=\h_p\}$. Since $\h$ is a completely nonintegrable distribution, it follows by the Frobenius theorem that $\Sg_0$ is closed and has empty interior in $\Sg$. Hence the \emph{regular set} $\Sg-\Sg_0$ of $\Sg$ is open and dense in $\Sg$.  By using the arguments in \cite[Lem.~1]{d2}, see also \cite[Thm.~1.2]{balogh} and \cite[App.~A]{hp2}, we can deduce that, for a $C^2$ surface $\Sg$, the Hausdorff dimension of $\Sg_{0}$ in $(M,g)$ is less than or equal to $1$.  In particular, the Riemannian area of $\Sg_{0}$ vanishes. 

If $\Sg$ is orientable and $N$ is a unit vector normal to $\Sg$ in $(M,g)$, then we define the \emph{area} of $\Sg$ by
\begin{equation}
\label{eq:area}
A(\Sg):=\int_{\Sg}|N_{h}|\,d\Sg,
\end{equation}
where $d\Sg$ is the Riemannian area element. If $\Sg$ bounds a set $\Om\subset M$ then the Riemannian divergence theorem implies that $A(\Sg)$ coincides with the \emph{sub-Riemannian perimeter \'a la De Giorgi} of $\Om$, which can be introduced by following \cite{cdg1}, see equation \eqref{eq:per} for a precise definition. It is clear that $\Sg_{0}=\{p\in\Sg\,;\,N_h(p)=0\}$, where $N_{h}:=N-\escpr{N,T}\,T$. Hence, the differentiation of the area functional could be problematic when $\Sg_0\neq\emptyset$. In the regular part $\Sg-\Sg_0$, we can define the \emph{horizontal Gauss map} $\nu_h$ and the \emph{characteristic vector field} $Z$, by
\begin{equation}
\label{eq:nuh}
\nu_h:=\frac{N_h}{|N_h|}, \qquad Z:=J(\nuh).
\end{equation}
As $Z$ is horizontal and orthogonal to $\nu_h$, then $Z$ is tangent to $\Sg$.  Hence $Z_{p}$ generates $T_{p}\Sg\cap\h_{p}$ for any $p\in\Sg-\Sg_0$. The integral curves of $Z$ in $\Sg-\Sg_0$ are called $(\emph{oriented}\,)$ \emph{characteristic curves} of $\Sg$.  They are
both tangent to $\Sg$ and to $\h$.  If we define
\begin{equation}
\label{eq:ese}
S:=\escpr{N,T}\,\nu_h-|N_h|\,T,
\end{equation}
then $\{Z_{p},S_{p}\}$ is an orthonormal basis of $T_p\Sg$ whenever $p\in\Sg-\Sg_0$.  Moreover, for any $p\in\Sg-\Sg_{0}$ we have the orthonormal basis of $T_{p}M$ given by $\{Z_{p},(\nuh)_{p},T_{p}\}$.  From here we deduce the following identities on $\Sg-\Sg_{0}$
\begin{equation}
\label{eq:relations}
|N_{h}|^2+\escpr{N,T}^2=1, \quad (\nu_{h})^\top=\escpr{N,T}\,S, 
\quad T^\top=-|N_{h}|\,S,
\end{equation}
where $U^\top$ stands for the projection of a vector field $U$ onto the tangent plane to $\Sg$.

If $\Sg$ is an oriented $C^2$ surface immersed in $M$ then, for any $p\in\Sg-\Sg_0$ and $v\in T_p M$, the following equalities hold (see \cite[Lem.~3.5]{hrr} and \cite[Lem.~4.2]{rosales}): 
\begin{align}
\label{eq:vmnh}
v\,(|N_h|)&=\escpr{D_{v}N, \nu_{h}}+\escpr{N,T}\,
\escpr{v,Z},
\\
\label{eq:vnt}
v(\escpr{N,T})&=\escpr{D_{v}N,T}+\escpr{N,J(v)}, \\
\label{eq:dvnuh}
D_{v}\nu_h&=|N_h|^{-1}\, \big(\escpr{D_vN,Z}-\escpr{N,T}\,
\escpr{v,\nuh}\big)\,Z+\escpr{v,Z}\,T.
\end{align}

We denote by $B$ the \emph{shape operator} of $\Sg$ in $(M,g)$.  It is defined, for any vector $U$ tangent to $\Sg$, by $B(U):=-D_{U}N$. We will say that $\Sg$ is \emph{complete} if it is complete with respect to the Riemannian metric induced by $(M,g)$.

\subsection{Differentiation under the integral sign}
\label{subsec:difint}
\noindent

In the next lemma we recall the classical Leibniz's rule for differentiating the integral of a function which depends on a real parameter. This will be used in Sections~\ref{sec:1stvar} and \ref{sec:stability} to compute the derivatives of the sub-Riemannian volume and area functionals. A proof can be obtained by following the arguments in \cite[Thm.~2.27]{folland}.

\begin{lemma}
\label{lem:difint}
Let $(\Om,\mu)$ be a measure space and $I\subseteq\rr$ an open interval. Consider a real valued function $f:I\times\Om\to\rr$ such that the following conditions hold:
\begin{itemize}
\item[(i)] for any $s\in I$, the function $f(s,\cdot):\Om\to\rr$ is integrable,
\item[(ii)] the function $f(\cdot,x):I\to\rr$ is absolutely continuous for almost every $x\in\Om$,
\item[(iii)] there is an integrable function $h:\Om\to\rr$ such that 
$|(\ptl f/\ptl s)(s,x)|\leq h(x)$ for almost every $s\in I$ and almost every  $x\in\Om$,
\item[(iv)] there is $s_0\in I$ such that $(\ptl f/\ptl s)(s_0,x)$ exists for almost every $x\in\Om$.
\end{itemize} 
Then, for the function $A:I\to\rr$ defined by $A(s):=\int_\Om f(s,x)\,d\mu(x)$, the derivative $A'(s_0)$ exists, and we have $A'(s_0)=\int_\Om (\ptl f/\ptl s)(s_0,x)\,d\mu(x)$.
\end{lemma}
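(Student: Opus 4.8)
The plan is to reduce the existence of $A'(s_0)$ to a statement about sequences and then apply the Dominated Convergence Theorem to the difference quotients. First I would fix an arbitrary sequence $s_n\to s_0$ in $I$ with $s_n\neq s_0$, and set
\[
g_n(x):=\frac{f(s_n,x)-f(s_0,x)}{s_n-s_0}.
\]
By hypothesis (i) each $f(s_n,\cdot)$ is integrable, so the difference quotient of $A$ equals $\int_\Om g_n\,d\mu$; it therefore suffices to show that $\int_\Om g_n\,d\mu$ converges to $\int_\Om(\ptl f/\ptl s)(s_0,x)\,d\mu(x)$ for every such sequence, since this is exactly the sequential characterization of the existence of $A'(s_0)$ with the claimed value (the limit being a fixed number independent of the chosen sequence).

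Next I would identify the a.e.\ pointwise limit and a dominating function. By hypothesis (iv), $(\ptl f/\ptl s)(s_0,x)$ exists for a.e.\ $x$, which is precisely the statement that $g_n(x)\to(\ptl f/\ptl s)(s_0,x)$ as $n\to\infty$ for a.e.\ $x$; being an a.e.\ limit of measurable functions, this limit is measurable. To dominate the $g_n$ I would use the absolute continuity in hypothesis (ii): since the closed interval between $s_0$ and $s_n$ is a compact subinterval of the open interval $I$, the fundamental theorem of calculus for absolutely continuous functions gives, for a.e.\ $x$,
\[
f(s_n,x)-f(s_0,x)=\int_{s_0}^{s_n}\frac{\ptl f}{\ptl s}(\sigma,x)\,d\sigma,
\]
so that $g_n(x)$ is the average of $(\ptl f/\ptl s)(\cdot,x)$ over that interval. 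Invoking the bound in hypothesis (iii), namely $|(\ptl f/\ptl s)(\sigma,x)|\leq h(x)$ for a.e.\ $\sigma$ and a.e.\ $x$, and integrating in $\sigma$, I obtain $|g_n(x)|\leq h(x)$ for a.e.\ $x$, with $h$ integrable.

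Finally, with the uniform domination $|g_n|\leq h$ and the a.e.\ convergence in hand, the Dominated Convergence Theorem yields
\[
\lim_{n\to\infty}\int_\Om g_n\,d\mu=\int_\Om\frac{\ptl f}{\ptl s}(s_0,x)\,d\mu(x),
\]
and the same bound applied at $\sigma=s_0$ shows that the right-hand side is finite. As the sequence $s_n\to s_0$ was arbitrary, this proves that $A'(s_0)$ exists and equals $\int_\Om(\ptl f/\ptl s)(s_0,x)\,d\mu(x)$. I expect the only delicate point to be the passage from hypotheses (ii) and (iii) to the domination $|g_n|\leq h$: one must check that the fundamental theorem of calculus applies on each compact subinterval of $I$ and that the a.e.\ bound on $\ptl f/\ptl s$ survives integration in $\sigma$, so that the textbook requirement of a pointwise differentiable, dominated integrand is here replaced by the weaker absolute-continuity assumption together with differentiability at the single point $s_0$.
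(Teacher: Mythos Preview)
Your proof is correct and follows essentially the same approach the paper has in mind: the paper does not give its own argument but simply refers to \cite[Thm.~2.27]{folland}, whose proof proceeds exactly as you do (sequential difference quotients, a.e.\ pointwise convergence, uniform domination, then dominated convergence). The only adaptation---replacing the mean value theorem by the fundamental theorem of calculus for absolutely continuous functions to obtain $|g_n|\leq h$---is precisely what is needed to accommodate hypothesis~(ii) in place of everywhere differentiability, and you carry it out correctly.
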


\begin{example}
Here we show that Lemma~\ref{lem:difint} need not hold if condition (ii) fails. Consider the function $f:(-1,1)\times (-1,1)\to\rr$ given by
\[
f(s,x):=
\begin{cases}
-1, \quad -1<s<x,
\\
1,\hspace{0.87cm} x<s<1.
\end{cases}
\]
Clearly $A(s)=\int_{-1}^1 f(s,x)\,dx=2s$, so that $A'(s)=2$ for any $s\in(-1,1)$. Note that the conditions (i), (iii) and (iv) in Lemma~\ref{lem:difint} are satisfied. However, differentiation under the integral sign fails; otherwise we would obtain $A'(s)=0$ for any $s\in (-1,1)$, a contradiction.
\end{example}

\section{The spherical surfaces $\s_\la(p)$}
\label{sec:spheres}

Pansu conjectured in \cite[p.~172]{pansu1} that any isoperimetric surface in the first Heisenberg group $\mm(0)$ is congruent to a  sphere of revolution $\s_\la$ obtained by vertical rotations of a CC-geodesic of curvature $\la\neq 0$ connecting two points $p$ and $p_\la$ of the vertical axis. Since vertical rotations are isometries of $\mm(0)$, it follows from Lemma~\ref{lem:geohomo} (i) that $\s_\la$ coincides with the union of all the CC-geodesics of curvature $\la$ joining $p$ and $p_\la$. This approach was also employed in \cite[Sect.~5.1]{hr1} to produce spherical surfaces in the sub-Riemannian $3$-sphere $\mm(1)$. In this section we extend the construction inside any $3$-dimensional space form $M$. For that, we must study first when the CC-geodesics of a fixed curvature $\la\in\rr$ leaving from a given point $p\in M$ meet again at another point $p_\la\in M$. 

\begin{lemma}
\label{lem:condition}
Let $M$ be a $3$-dimensional space form of Webster scalar curvature $\kappa$. Fix a point $p\in M$ and a number $\la\in\rr$. Then, all the CC-geodesics of curvature $\la$ leaving from $p$ meet again in $M$ if and only if $\la^2+\kappa>0$. In such a case, the length of any CC-geodesic of curvature $\la$ between $p$ and the first cut point $p_\la$ equals $\pi/\sqrt{\la^2+\kappa}$.
\end{lemma}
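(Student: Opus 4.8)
The plan is to realize the CC-geodesics of curvature $\la$ emanating from $p$ as a variation and to detect their common meeting through the associated CC-Jacobi field. Let $U(\theta)$ parameterize the unit horizontal circle of $\h_p$ by the angle $\theta$, and for the constant curve $\alpha(\theta):=p$ consider the map $F(\theta,s):=\ga_\theta(s)$, where $\ga_\theta$ is the CC-geodesic of curvature $\la$ with $\ga_\theta(0)=p$ and $\dot{\ga}_\theta(0)=U(\theta)$. By Lemma~\ref{lem:ccjacobi} the variation field $V_\theta(s):=(\ptl F/\ptl\theta)(\theta,s)$ is a CC-Jacobi field along $\ga_\theta$, and $V_\theta(0)=0$ since every $\ga_\theta$ starts at $p$. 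The geodesics meet again precisely when $F(\cdot,s_0)$ is constant for some $s_0>0$, i.e. when $V_\theta(s_0)=0$ for all $\theta$; the common value $p_\la:=F(\cdot,s_0)$ is then the cut point and $s_0$ is the length of each geodesic from $p$ to $p_\la$. As the initial data computed below is the same for every $\theta$, it suffices to analyze a single geodesic $\ga:=\ga_0$ through its vertical Jacobi component $v(s):=\escpr{V,T}(s)$.

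First I would reduce the vanishing of the full vector field $V$ to a scalar condition on $v$. Writing $V$ in the orthonormal frame $\{\dot{\ga},J(\dot{\ga}),T\}$ along $\ga$, the conservation law in Lemma~\ref{lem:ccjacobi}(ii) together with $V(0)=0$ forces the constant to be zero, so $\escpr{V,\dot{\ga}}=-\la\,v$, while \eqref{eq:jacobi1} gives $\escpr{V,J(\dot{\ga})}=v'/2$. Hence $V=-\la\,v\,\dot{\ga}+(v'/2)\,J(\dot{\ga})+v\,T$, and therefore $V(s)=0$ if and only if $v(s)=0$ and $v'(s)=0$. This converts the geometric meeting condition into the search for a first common zero of $v$ and $v'$.

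Next I would pin down the initial data for $v$ and integrate. From $V(0)=0$ one gets $v(0)=0$ and, by \eqref{eq:jacobi1}, $v'(0)=2\escpr{V(0),J(\dot{\ga}(0))}=0$; differentiating \eqref{eq:jacobi1} and again using $V(0)=0$ gives $v''(0)=2\escpr{V'(0),J(\dot{\ga}(0))}$. Identifying $V'(0)=D_{\dot{\ga}}V(0)=(d/d\theta)\,U(\theta)=J(\dot{\ga}(0))$ (torsion-freeness of $D$ together with $[\dot{\ga}_\theta,V_\theta]=0$ from Lemma~\ref{lem:ccjacobi}(i)) then yields $v''(0)=2$. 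Since the Webster curvature is the constant $\kappa$, Lemma~\ref{lem:jacobicon} with $\tau:=4(\la^2+\kappa)$ produces $v$ explicitly from these data: for $\tau>0$ one gets $v(s)=(2/\tau)\,(1-\cos(\sqrt{\tau}\,s))$, for $\tau=0$ one gets $v(s)=s^2$, and for $\tau<0$ one gets $v(s)=(2/\tau)\,(1-\cosh(\sqrt{-\tau}\,s))$. In the latter two cases $v(s)>0$ for all $s>0$, so $V$ never vanishes and the geodesics do not meet, matching $\la^2+\kappa\leq 0$. For $\tau>0$ the function $v$ is non-negative, so its first positive zero is automatically a zero of $v'$; it occurs at $\sqrt{\tau}\,s=2\pi$, that is $s=\pi/\sqrt{\la^2+\kappa}$, giving both the equivalence with $\la^2+\kappa>0$ and the claimed length.

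The step I expect to be the main obstacle is the passage between the geometric and infinitesimal pictures together with the normalization of the initial data. One must argue carefully that the simultaneous vanishing $V_\theta(s_0)=0$ is both necessary and sufficient for all the geodesics to reconvene at a single point (here the connectedness of the circle of directions and, if preferred, the rotational symmetry about $p$ enter), and one must compute $V'(0)$, hence $v''(0)$, correctly, since a wrong value would displace the first zero of $v$. Once the reduction $V=0\Leftrightarrow v=v'=0$ and the data $v(0)=v'(0)=0$, $v''(0)=2$ are in place, the remainder is the routine integration and sign discussion supplied by Lemma~\ref{lem:jacobicon}.
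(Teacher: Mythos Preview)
Your approach is essentially the paper's: set up the CC-Jacobi field $V_\theta$ along each $\ga_\theta$, express it as $V_\theta=-\la v\,\dot{\ga}_\theta+(v'/2)\,J(\dot{\ga}_\theta)+v\,T$ via Lemma~\ref{lem:ccjacobi}(ii) and \eqref{eq:jacobi1}, compute the initial data $v(0)=v'(0)=0$, $v''(0)=2$, and integrate by Lemma~\ref{lem:jacobicon}. The one place the paper is more explicit is exactly the obstacle you flag: for the necessity direction it does not assume a common meeting time but allows $s(\theta)$ to vary, differentiates $F(\theta,s(\theta))=p_\la$ to obtain $V_\theta(s(\theta))+s'(\theta)\,\dot{\ga}_\theta(s(\theta))=0$, and reads off the $T$-component (using that $\dot{\ga}_\theta$ is horizontal) to get $v(s(\theta))=0$, which forces $\la^2+\kappa>0$ and $s(\theta)\equiv\pi/\sqrt{\la^2+\kappa}$---no appeal to rotational symmetry of $M$ is needed.
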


\begin{proof}
Fix a positive orthonormal basis $\{e_1,e_2\}$ in the contact plane $\h_p$. For any $\theta\in\rr$, let $\ga_\theta:\rr\to M$ be the CC-geodesic  of curvature $\la$ with $\ga_\theta(0)=p$ and $\dot{\ga}_\theta(0)=U(\theta)$, where $U(\theta):=(\cos\theta)\,e_1+(\sin\theta)\,e_2$. The $C^\infty$ map $F:\rr^2\to M$ defined by $F(\theta,s):=\ga_\theta(s)$ is the flow of the CC-geodesics of curvature $\la$ leaving from $p$. We denote $V_\theta(s):=(\ptl F/\ptl\theta)(\theta,s)$. This is a CC-Jacobi field along $\ga_\theta$ by Lemma~\ref{lem:ccjacobi} (iii). Let $v_\theta:=\escpr{V_\theta,T}$. By using Lemma~\ref{lem:ccjacobi} (ii), equation \eqref{eq:jacobi1} and that $V_\theta(0)=0$, we deduce
\begin{equation}
\label{eq:vtheta}
V_\theta=-(\la\,v_\theta)\,\dot{\ga}_\theta+(v'_\theta/2)\,J(\dot{\ga}_\theta)+v_\theta\,T.
\end{equation}
Evaluating at $s=0$ we get $v_\theta'(0)=0$. We compute $V'_\theta(0)$ from \eqref{eq:vtheta}, so that we obtain
\[
V_\theta'(0)=\frac{v''_\theta(0)}{2}\,J\big(U(\theta)\big).
\]
On the other hand, $V_\theta'=D_{\dot{\ga}_\theta}V_\theta=D_{V_\theta}\dot{\ga}_\theta$ since $[\dot{\ga}_\theta,V_\theta]=0$ along $\ga_\theta$. Therefore
\[
V_\theta'(0)=\frac{d}{d\theta}\bigg|_\theta\,\dot{\ga}_\theta(0)=-(\sin\theta)\,e_1+(\cos\theta)\,e_2=J\big(U(\theta)\big),
\]
so that $v''_\theta(0)=2$. Since $M$ has constant Webster scalar curvature, we apply Lemma~\ref{lem:jacobicon} to get
\begin{equation}
\label{eq:vthetacases}
v_\theta(s)=
\begin{cases}
\frac{1}{2(\la^2+\kappa)}\,\big(1-\cosh(2\,\sqrt{-(\la^2+\kappa)}\,s)\big), \ \text{ if } \ 
\la^2+\kappa<0,
\\
s^2, \hspace{5.20cm} \text{ if } \ \la^2+\kappa=0,
\\
\frac{1}{2(\la^2+\kappa)}\,\big(1-\cos(2\,\sqrt{\la^2+\kappa}\,s)\big),
\hspace{0.92cm} \text{ if } \ \la^2+\kappa>0.
\end{cases}
\end{equation}
It follows that $v_\theta$ does not depend on $\theta$, and that $v_\theta(s)\neq 0$ for any $s\neq 0$ if $\la^2+\kappa\leq 0$. For $\la^2+\kappa>0$ it is clear that the first $s_0>0$ for which $v_\theta(s_0)=0$ is given by $s_0:=\pi/\sqrt{\la^2+\kappa}$.

Let us prove the statement. If $\la^2+\kappa>0$ then \eqref{eq:vtheta} implies $V_\theta(s_0)=0$ for any $\theta\in\rr$ since $v_\theta(s_0)=v_\theta'(s_0)=0$. This is equivalent to the existence of a first point $p_\la\in M$ such that $\ga_\theta(s_0)=p_\la$ for any $\theta\in\rr$. Conversely, suppose that all the CC-geodesics $\ga_\theta$ meet again at $p_\la\in M$. This means that there is a $C^1$ function $s(\theta)>0$ such that $p_\la=\ga_\theta(s(\theta))=F(\theta,s(\theta))$ for any $\theta\in\rr$. Note that $s(\theta)$ is the length of $\ga_\theta$ between $p$ and $p_\la$. By taking derivatives with respect to $\theta$, we obtain
\[
0=V_\theta\big(s(\theta)\big)+s'(\theta)\,\dot{\ga}_\theta\big(s(\theta)\big),
\]
and so the vertical component $v_\theta$ of $V_\theta$ vanishes at $s(\theta)$. From the explicit computation of $v_\theta$ in \eqref{eq:vthetacases}, we conclude that $\la^2+\kappa>0$ and $s(\theta)$ must be constant. Furthermore, the first cut point of the CC-geodesics $\ga_\theta$ is attained for $s_0=\pi/\sqrt{\la^2+\kappa}$.
\end{proof}

The previous lemma motivates the following definition. Let $M$ be a $3$-dimensional space form of Webster scalar curvature $\kappa$. For any $p\in M$ and any $\la\geq 0$ such that $\la^2+\kappa>0$, we define the \emph{spherical surface} $\s_\la(p)$ as the set
\begin{equation}
\label{eq:sla}
\s_\la(p):=\big\{\ga_v(s)\,;\,v\in\h_p,\,|v|=1,\,0\leq s\leq\pi/\sqrt{\la^2+\kappa}\big\},
\end{equation}
where $\ga_v:\rr\to M$ is the CC-geodesic of curvature $\la$ with $\ga_v(0)=p$ and $\dot{\ga}_v(0)=v$. The \emph{south and north poles} of $\s_\la(p)$ are respectively given by $p$ and $p_\la:=\ga_v(\pi/\sqrt{\la^2+\kappa})$ (which does not depend on $v\in\h_p$ with $|v|=1$).  For further reference we define the \emph{equator} of $\sla(p)$ by $\{\ga_v\big(\pi/(2\,\sqrt{\la^2+\kappa})\big)\,;\,v\in\h_p,\,|v|=1\}$ and the \emph{open hemispheres} as the sets
\begin{align*}
\s_\la(p)^-&:=\big\{\ga_v(s)\,;\,v\in\h_p,\,|v|=1,\,0\leq s<\pi/(2\,\sqrt{\la^2+\kappa})\big\},
\\
\s_\la(p)^+&:=\big\{\ga_v(s)\,;\,v\in\h_p,\,|v|=1,\,\pi/(2\,\sqrt{\la^2+\kappa})<s\leq\pi/\sqrt{\la^2+\kappa}\big\}.
\end{align*}

As an immediate consequence of Lemma~\ref{lem:geohomo} we can deduce the behaviour of $\s_\la(p)$ with respect to local isometries and the homothetic deformation in \eqref{eq:homothetic}.

\begin{lemma}
\label{lem:slaiso}
Let $M$ be a $3$-dimensional space form of Webster scalar curvature $\kappa$. For any $p\in M$ and any $\la\geq 0$ such that $\la^2+\kappa>0$, we have
\begin{itemize}
\item[(i)] If $\phi:M\to M'$ is a local isometry then $\phi\big(\s_\la(p)\big)=\s'_\la\big(\phi(p)\big)$.
\item[(ii)] For any $\eps>0$, the set $\s_\lambda(p)$ associated to the sub-Riemannian metric $g_h$ coincides with the set $\s_{\la/\eps}(p)$ for the sub-Riemannian metric $(g_h)_\eps:=\eps^2 g_h$. 
\end{itemize}
\end{lemma}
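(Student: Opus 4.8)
The plan is to translate the two statements about individual CC-geodesics in Lemma~\ref{lem:geohomo} into statements about the set \eqref{eq:sla}, keeping track of how the initial velocities, the curvature, and the length of the defining CC-geodesics transform in each case.

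For item (i), I would fix a unit vector $v\in\h_p$ and let $\ga_v$ be the CC-geodesic of curvature $\la$ with $\ga_v(0)=p$ and $\dot{\ga}_v(0)=v$. By Lemma~\ref{lem:geohomo}~(i) the curve $\phi\circ\ga_v$ is a CC-geodesic of curvature $\la$ in $M'$, with initial point $\phi(p)$ and initial velocity $d\phi_p(v)$. Since $\phi$ is a local isometry, $d\phi_p$ is an orientation-preserving linear isometry of $\h_p$ onto $\h'_{\phi(p)}$; hence $d\phi_p(v)$ is again a unit horizontal vector, and as $v$ ranges over the unit horizontal circle at $p$ the vector $d\phi_p(v)$ ranges bijectively over the unit horizontal circle at $\phi(p)$. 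By the uniqueness of CC-geodesics with prescribed initial data we obtain $\phi\circ\ga_v=\ga'_{d\phi_p(v)}$, where the prime denotes the corresponding object in $M'$. It remains to check that the two defining intervals for $s$ agree, and this follows because the Webster scalar curvature is invariant under local isometries: thus $M'$ has the same constant $\kappa$, and the endpoint $\pi/\sqrt{\la^2+\kappa}$ is unchanged. Substituting $w:=d\phi_p(v)$ in \eqref{eq:sla} then gives $\phi(\s_\la(p))=\s'_\la(\phi(p))$.

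For item (ii), the only subtlety is the bookkeeping of the scaling factors. Starting from $\ga_v$ as above, Lemma~\ref{lem:geohomo}~(ii) says that $(\ga_v)_\eps(s):=\ga_v(s/\eps)$ is a CC-geodesic of curvature $\la/\eps$ for the metric $(g_h)_\eps$. I would first verify that $(\ga_v)_\eps$ has unit $(g_h)_\eps$-velocity and that, as $v$ ranges over the unit $g_h$-circle at $p$, the initial velocities $(1/\eps)\,v$ range bijectively over the unit $(g_h)_\eps$-circle at $p$; together with uniqueness this identifies $\{(\ga_v)_\eps\}$ with all the curvature-$\la/\eps$ CC-geodesics from $p$ for $(g_h)_\eps$. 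The key point is then to match the two $s$-intervals. Since the Webster scalar curvature transforms as $K_\eps=K/\eps^2$, the deformed space form has constant Webster scalar curvature $\kappa/\eps^2$, so the endpoint defining $\s_{\la/\eps}(p)$ for $(g_h)_\eps$ equals $\pi/\sqrt{(\la/\eps)^2+\kappa/\eps^2}=\eps\,\pi/\sqrt{\la^2+\kappa}$. The reparameterization $s\mapsto s/\eps$ carries $[0,\eps\,\pi/\sqrt{\la^2+\kappa}]$ onto $[0,\pi/\sqrt{\la^2+\kappa}]$, so the set traced by $(\ga_v)_\eps$ over its interval coincides pointwise with the set traced by $\ga_v$ over $[0,\pi/\sqrt{\la^2+\kappa}]$; comparing with \eqref{eq:sla} yields the claimed equality of sets.

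I do not expect any serious obstacle, as both parts reduce to Lemma~\ref{lem:geohomo} together with elementary observations. The one point that must not be glossed over is the consistency of the parameter intervals: in (i) this rests on the invariance of $\kappa$ under local isometries, while in (ii) it requires that the simultaneous rescaling of the curvature (to $\la/\eps$), of the Webster scalar curvature (to $\kappa/\eps^2$), and of the arc-length parameter (by the factor $\eps$) all combine so that the endpoint $\pi/\sqrt{\la^2+\kappa}$ is recovered after the substitution $s\mapsto s/\eps$.
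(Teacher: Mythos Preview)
Your proposal is correct and follows exactly the approach the paper intends: the authors state the lemma as ``an immediate consequence of Lemma~\ref{lem:geohomo}'' without further detail, and what you have written is precisely the careful unpacking of that remark, including the verification that the defining $s$-intervals match under the local isometry and under the homothetic rescaling. There is nothing to add or correct.
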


Our main goal in this section is to prove some properties of the sets $\s_\la(p)$ regarding their symmetry, embeddeness, topology and smoothness. They are gathered in the next result.

\begin{theorem}
\label{th:sla}
Let $M$ be a $3$-dimensional space form of Webster scalar curvature $\kappa$. Fix a point $p\in M$ and a number $\la\geq 0$ such that $\la^2+\kappa>0$. Then we have
\begin{itemize} 
\item[(i)] The set $\s_\la(p)$ is invariant under any $\phi\in\text{Iso}(M)$ with $\phi(p)=p$. Moreover, if $M$ is homogeneous, then any two sets $\s_\la(p)$ and $\s_\la(q)$ are congruent.
\item[(ii)] If $M=\e$ then $\s_\la(p)$ is an embedded $C^2$ surface congruent to a sphere of revolution with respect to the vertical axis.
\item[(iii)] The set $\s_\la(p)$ is a $C^2$ sphere immersed in $M$. This sphere is also $C^\infty$ off of the singular set, which consists of the south and north poles. 
\item[(iv)] The sphere $\s_\la(p)$ is always Alexandrov embedded. Moreover, there is $\la_0=\la_0(p)\geq 0$ such that $\la_0^2+\kappa>0$ and $\s_\la(p)$ is embedded for any $\la>\la_0$.
\end{itemize}
\end{theorem}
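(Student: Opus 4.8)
The plan is to carry out the geometric work in the simply connected model $\e$, where (ii) holds, and then to deduce (iii) and (iv) by pushing forward along the local isometry of Proposition~\ref{prop:model}. Statement (i) needs no model: if $\phi\in\text{Iso}(M)$ fixes $p$, then Lemma~\ref{lem:slaiso}~(i) gives $\phi(\sla(p))=\sla(\phi(p))=\sla(p)$, so $\sla(p)$ is $\phi$-invariant; and if $M$ is homogeneous, any isometry sending $p$ to $q$ carries $\sla(p)$ onto $\sla(q)$, which is exactly congruence.

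For (ii) I take $M=\e$. Since $\e$ is homogeneous we may assume, after an isometry and Lemma~\ref{lem:slaiso}~(i), that $p=o$ lies on the vertical axis. The Euclidean vertical rotations $R_\theta$ about the axis fix $o$ and act transitively on the unit horizontal circle of $\h_o$, so Lemma~\ref{lem:geohomo}~(i) and uniqueness for \eqref{eq:geoeq} give $\ga_\theta=R_\theta\circ\ga$ for a single profile CC-geodesic $\ga$; hence $\sla(o)$ is the surface obtained by revolving $\ga([0,s_0])$, where $s_0=\pi/\sqrt{\la^2+\kappa}$. A point $\ga(s)$ lies on the axis precisely when all the $\ga_\theta$ pass through it at time $s$, so by Lemma~\ref{lem:condition} the profile meets the axis only at its endpoints and the north pole $p_\la=\ga(s_0)$ is on the axis. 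Writing $\tilde r(s)$ for the distance of $\ga(s)$ to the axis and $t(s)$ for its fiber coordinate, the holonomy identity of Lemma~\ref{lem:holonomy} expresses $t(s)$ as twice the area swept by the projected curve $\mathcal{F}\circ\ga$, whose areal velocity is strictly positive on $(0,s_0)$; thus $t$ is strictly increasing there. Strict monotonicity of $t$ together with $\tilde r>0$ on $(0,s_0)$ shows the profile is a graph over the axis, so $\sla(o)$ is an embedded sphere of revolution, and off the poles it is a $C^\infty$ immersion because, by \eqref{eq:vtheta} and \eqref{eq:vthetacases}, the field $V_\theta=\partial F/\partial\theta$ is not proportional to $\dot{\ga}_\theta$ for $0<s<s_0$.

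The \emph{main obstacle} is $C^2$ regularity at the two poles, where the tangent plane collapses onto the horizontal plane. Here I would describe $\sla(o)$ near $o$ as a graph $t=f(x,y)$ over the base disk; by the rotational symmetry $f$ is radial, $f=\hat f(\tilde r)$. The decisive input is the order of contact with the horizontal plane: a short-arc estimate of the swept area gives $t(s)=O(s^3)$ while $\tilde r(s)=s+O(s^3)$, so that $\hat f(\tilde r)=c\,\tilde r^3+o(\tilde r^3)$. Since the radial function $(x^2+y^2)^{3/2}$ on $\rr^2$ is of class $C^2$ but not $C^3$, this produces exactly the claimed $C^2$ regularity at the poles and no more. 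The technical heart is to control the remainder so that $f$ is genuinely $C^2$ rather than merely twice differentiable at the pole; the expansions $t=O(s^3)$ and $\tilde r=s+O(s^3)$ are forced by horizontality together with the holonomy formula, and so do not require the explicit form of the CC-geodesics.

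Finally, for (iii) and (iv) let $\Pi:\e\to M$ be the surjective local isometry of Proposition~\ref{prop:model} and choose $\tilde p$ with $\Pi(\tilde p)=p$. As $\Pi$ maps CC-geodesics of curvature $\la$ to CC-geodesics of curvature $\la$, Lemma~\ref{lem:slaiso}~(i) gives $\sla(p)=\Pi(\sla(\tilde p))$; since $\Pi$ is a local diffeomorphism preserving $\h$ and $\sla(\tilde p)$ is an embedded $C^2$ sphere with singular set its two poles, $\sla(p)$ is a $C^2$ immersed sphere, smooth off the poles, with the same singular set, which proves (iii). For (iv), the embedded sphere $\sla(\tilde p)\subset\e$ bounds a topological $3$-ball $B$ (by the smooth Schoenflies theorem in $\e\cong\rr^3$, or Alexander's theorem in $\mathbb{S}^3$), and $\Pi|_{\overline B}$ is continuous, an immersion on the interior, and restricts on $\ptl B$ to the immersion parametrizing $\sla(p)$; this is precisely Alexandrov embeddedness. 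To obtain embeddedness for large $\la$, note that every point of $\sla(\tilde p)$ lies at Riemannian distance at most $\pi/\sqrt{\la^2+\kappa}$ from $\tilde p$, so $\sla(\tilde p)$ shrinks to $\tilde p$ as $\la\to\infty$; since $G$ acts freely and properly discontinuously, $\Pi$ is injective on some ball $B(\tilde p,\delta)$, and taking $\la_0$ with $\pi/\sqrt{\la_0^2+\kappa}=\delta$ (so that $\la_0^2+\kappa>0$) forces $\sla(\tilde p)\subset B(\tilde p,\delta)$, whence $\sla(p)=\Pi(\sla(\tilde p))$ is embedded, for every $\la>\la_0$.
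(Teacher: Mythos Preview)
Your overall architecture matches the paper's: statement~(i) via Lemma~\ref{lem:slaiso}, statement~(ii) worked out in the model $\e$, and (iii)--(iv) pushed forward through the covering $\Pi$ of Proposition~\ref{prop:model}. The arguments you give for (i), (iii) and (iv) are essentially those of the paper; the only cosmetic difference is that the paper uses the CC-distance ball (rather than the Riemannian one) on which $\Pi$ is injective, which is harmless since the CC-geodesics $\ga_\theta$ are horizontal and hence have the same length in both metrics.

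The substantive divergence, and the gap, is in (ii). The paper does not attempt a conceptual regularity argument: it reduces via Lemma~\ref{lem:slaiso}~(ii) to $\kappa\in\{-1,0,1\}$, cites the known cases $\kappa=0,1$, and for $\kappa=-1$ computes the profile function explicitly by combining Lemma~\ref{lem:holonomy} with the Gauss--Bonnet formula for circular sectors in $\mathbb{N}(-1)$, obtaining a closed expression from which $C^2$ regularity (and failure of $C^3$) at the poles is read off directly. Your route tries to bypass the explicit formula by arguing that $\hat f(\tilde r)=c\,\tilde r^3+o(\tilde r^3)$ and then invoking the regularity of $(x^2+y^2)^{3/2}$. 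This inference is not valid: an expansion of the form $c\,\tilde r^3+o(\tilde r^3)$ controls only the values of $\hat f$, not its first or second derivatives, so it yields neither $C^2$ nor failure of $C^3$. For a radial graph one needs $\hat f'(\tilde r)/\tilde r\to 0$ and $\hat f''(\tilde r)\to 0$, i.e.\ control on two derivatives of the remainder, which your ``short-arc estimate'' does not provide. You flag this yourself (``the technical heart is to control the remainder'') but do not carry it out; absent that, (ii) is not proved. A smaller looseness: Lemma~\ref{lem:holonomy} is stated for closed Jordan curves, so to extract $t(s)$ from it you must close up the projected arc by a radial segment back to the origin and check that this segment lifts with no vertical displacement---this is exactly the paper's sector $\Om_z$, and it deserves a sentence.
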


\begin{remark}
A compact surface $\Sg$ immersed in $M$ is \emph{Alexandrov embedded} if we can extend the immersion $\varphi_0:\Sg\to M$ to an immersion $\widetilde{\varphi}_0:\mathcal{D}\to M$ of a compact $3$-manifold $\mathcal{D}$ with $\ptl\mathcal{D}=\Sg$.
\end{remark}

\begin{proof}[Proof of Theorem~\ref{th:sla}]
Statement (i) is an immediate consequence of Lemma~\ref{lem:slaiso} (i). By using that $\e$ is homogeneous and Lemma~\ref{lem:slaiso} (ii) it suffices to prove (ii) when $\kappa=-1,0,1$ and $p$ equals the identity element $o$ in $(\e,*)$. We shall denote the associated set by $\s^\kappa_\la(o)$.

The cases $\kappa=0,1$ are well known, see for example \cite[Ex.~3.3]{rr2} and \cite[Prop.~5.1, Thm.~6.4]{hr1}. For the remaining case $\kappa=-1$ we proceed as follows. Fix $\la>1$. Let $\ga$ be a CC-geodesic in $\mathbb{M}(-1)$ of curvature $\la$ and length $\pi/\sqrt{\la^2-1}$ leaving from $o$. By using \cite[Lem.~3.1]{rosales} or \cite[Thm.~1.26]{montgomery} we deduce that the Euclidean projection of $\ga$ onto the hyperbolic plane $\mathbb{N}(-1)$ is a clockwise parameterized circle $c_\la$ of radius $1/(2\la)$. Take a point $q:=(z,t)\in\ga$. Let $\Om_z\subset\mathbb{N}(-1)$ be the circular sector bounded by the arc $\widehat{oz}$ of $c_\la$ and the segment $\overline{oz}$. By Lemma~\ref{lem:holonomy} we get that $t/2$ equals the area of $\Om_z$ in $\mathbb{N}(-1)$. To compute this area as a function of $|z|$ we can use the Gauss-Bonnet formula.  It follows that $\s^\kappa_\la(o)$ is the union of two radial graphs over the horizontal disk $D_\la:=\{z\in\mathbb{R}^2\,;\,|z|\leq 1/\la\}$. In fact, a suitable vertical translation maps $\s^\kappa_\la(o)$ into $\{(z,t)\in D_\la\times\rr\,;\,t=\pm f(|z|)\}$, where $f:[0,1/\la]\to\rr$ is the function given by 
\[
f(x):=\frac{\pi}{2}\left(1-\frac{\la}{\sqrt{\la^2-1}}\right)+\frac{\la}{\sqrt{\la^2-1}}\,\arcsin\left(\frac{\sqrt{\la^2-1}\,x}{\sqrt{1-x^2}}\right)-\arctan\left(\frac{\la\,x}{\sqrt{1-\la^2 x^2}}\right).
\]
From here it is easy to check that $\s^\kappa_\la(o)$ is a $C^2$ embedded sphere of revolution with singular points only at $(0,\pm f(0))$.  Moreover, $\s^\kappa_\la(o)$ is $C^\infty$ off of these points, where it fails to be $C^3$. This is also known in $\mathbb{M}(0)$, see the remark after Thm.~8.16 in \cite{survey}, and in $\mathbb{M}(1)$ for $\la\neq 0$, see \cite[Rem.~5.2]{hr1}.

Now we can prove (iii) and (iv). We apply Proposition~\ref{prop:model} to the $3$-dimensional space form $M$, so that we find a surjective local isometry $\Pi:\e\to M$. Given a point $p_0\in\e$ with $\Pi(p_0)=p$, we have by Lemma~\ref{lem:slaiso} (i) that $\s_\la(p)=\Pi\big(\s^\kappa_\la(p_0)\big)$. This allows to deduce (iii) from (ii). Moreover, $\s_\la(p)$ is Alexandrov embedded since the sphere $\s^\kappa_\la(p_0)$ bounds a topological $3$-ball in $\e$. Finally, the fact that $\Pi:\e\to M$ is a local isometry implies the existence of an open ball $\mathcal{B}:=B^\kappa(p_0,R_0)$ for the Carnot-Carath\'eodory distance in $\e$ such that $\Pi:\mathcal{B}\to\Pi(\mathcal{B})$ is an isometry. We denote by $\la_0$ the unique non-negative number with $\la^2_0+\kappa>0$ and $\pi/\sqrt{\la^2_0+\kappa}=R_0$. By definition of CC-distance it is clear that $\s^\kappa_\la(p_0)\subset\mathcal{B}$ for any $\la>\la_0$. In particular, the sphere $\s_\la(p)=\Pi\big(\s^\kappa_\la(p_0)\big)$ is embedded for any $\la>\la_0$. This completes the proof.
\end{proof}

\begin{remarks}
\label{re:pozo}
1. In a homogeneous Sasakian $3$-manifold $M$ the number $\la_0=\la_0(p)$ in statement (iv) does not depend on $p$. So, the spheres $\s_\la(p)$ are embedded for any $\la>\la_0$ and any $p\in M$. 

2. If $M$ has non-trivial topology, then the spheres $\s_\la(p)$ need not be embedded for all values of $\la$. To illustrate this we consider a Sasakian cylinder $M:=\e/G$ as the ones in Example~\ref{ex:topology}. Take a sphere of revolution $\s_\la^\kappa(o)$ in $\e$ with poles identified under the action of $G$. If we denote $p:=\Pi(o)$ then $\s_\la(p)=\Pi(\s_\la^\kappa(o))$, so that $\s_\la(p)$ is immersed in $M$ but not embedded. 

3. The arguments in Theorem~\ref{th:sla} allow also the construction of immersed planes inside any $3$-dimensional space form $M$.  Given $p\in M$ and $\la\geq 0$ such that $\la^2+\kappa\leq 0$, we define the set
\begin{equation}
\label{eq:lla}
\mathcal{L}_\la(p):=\big\{\ga_v(s)\,;\,v\in\h_p,\,|v|=1,\,s\geq 0\},
\end{equation}
where $\ga_v:\rr\to M$ is the CC-geodesic of curvature $\la$ with $\ga_v(0)=p$ and $\dot{\ga}_v(0)=v$. By equation \eqref{eq:vthetacases} we know that the vertical component of the CC-Jacobi field given in \eqref{eq:vtheta} never vanishes for $s>0$. Hence, the set $\mathcal{L}_\la(p)$ is a complete $C^2$ plane immersed in $M$ whose singular set is $\{p\}$. 

4. It is natural to ask if our construction of $\sla(p)$ would provide spherical surfaces inside an arbitrary complete, contact, sub-Riemannian $3$-manifold $M$ of constant Webster scalar curvature. Observe that, if $M$ is not Sasakian, then the characteristic curves of a CMC surface are not necessarily CC-geodesics, but they still satisfy a second order ODE similar to \eqref{eq:geoeq}, see \cite[Prop.~4.1]{galli}. Hence we may consider the set $\s_\la(p)$ of all the solutions to such ODE leaving from $p$ and study, as in Lemma~\ref{lem:condition}, when these solutions meet again. However, the analysis of the associated CC-Jacobi fields is much more involved since their vertical component does not only depend on the Webster scalar curvature but also on the Webster-Tanaka torsion \cite[Prop.~4.3]{galli}.
\end{remarks}

We conclude this section by introducing \emph{polar coordinates} inside the spherical surfaces $\s_\la(p)$ and giving some computations in such coordinates that will be helpful in the sequel. 

\begin{lemma}
\label{lem:slaF}
Let $M$ be a $3$-dimensional space form of Webster scalar curvature $\kappa$. Given a point $p\in M$, a positive orthonormal basis $\{e_1,e_2\}$ in $\h_p$, and a number $\la\geq 0$ with $\la^2+\kappa>0$, we define the map $F:\rr^2\to M$ by $F(\theta,s):=\ga_\theta(s)$, where $\ga_\theta$ is the CC-geodesic of curvature $\la$ with $\ga_\theta(0)=p$ and $\dot{\ga}_\theta(0)=(\cos\theta)\,e_1+(\sin\theta)\,e_2$. Denote $\tau:=\sqrt{\la^2+\kappa}$. The following properties are satisfied: 
\begin{itemize}
\item[(i)] the restriction of $F$ to $C_\la:=[0,2\pi]\times(0,\pi/\tau)$ is a $C^\infty$ immersion whose range is $\s_\la(p)$ minus the poles,
\item[(ii)] there is a Riemannian unit normal $N$ along $\s_\la(p)$ such that any CC-geodesic $\ga_\theta(s)$ with $s\in(0,\pi/\tau)$ is a characteristic curve of $\s_\la(p)$, 
\item[(iii)] with respect to the coordinates $(\theta,s)\in C_\la$, these equalities hold
\begin{align*}
d\sla(p)&=\frac{\sin(\tau s)}{\tau^2}\,\sqrt{1+(\tau^2-1)\cos^2(\tau s)}\ d\theta\,ds,
\\
\mnh&=\frac{\sin(\tau s)}{\sqrt{1+(\tau^2-1)\cos^2(\tau s)}}, 
\hspace{3cm} \escpr{N,T}=\frac{\tau\,\cos(\tau s)}{\sqrt{1+(\tau^2-1)\cos^2(\tau s)}},
\\
\escpr{B(Z),Z}&=2\la\,\mnh, \quad \escpr{B(Z),S}=(1-\tau^2)\,\mnh^2,
\quad \escpr{B(S),S}=\frac{\la\,\tau^2\,\mnh}{1+(\tau^2-1)\cos^2(\tau s)},
\end{align*}
where $d\sla(p)$ is the Riemannian area element, $\{Z,S\}$ is the orthonormal basis in \eqref{eq:nuh} and \eqref{eq:ese}, and $B$ is the Riemannian shape operator with respect to $N$.
\end{itemize}
\end{lemma}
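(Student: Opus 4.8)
The starting point is the explicit CC-Jacobi field from Lemma~\ref{lem:condition}. Writing $v(s):=\escpr{V_\theta,T}(s)$, the formula \eqref{eq:vtheta} reads $V_\theta=-(\la\,v)\,\dot{\ga}_\theta+(v'/2)\,J(\dot{\ga}_\theta)+v\,T$, and the case $\la^2+\kappa>0$ of \eqref{eq:vthetacases}, rewritten via $1-\cos(2\tau s)=2\sin^2(\tau s)$, gives $v(s)=\sin^2(\tau s)/\tau^2$, which is independent of $\theta$ and strictly positive on $(0,\pi/\tau)$. These two facts drive everything. For (i), the smoothness of $F$ on $C_\la$ follows from smooth dependence of CC-geodesics on their initial data; since $(\ptl F/\ptl s)=\dot{\ga}_\theta$ is horizontal while $(\ptl F/\ptl\theta)=V_\theta$ has nonvanishing vertical component $v(s)\neq 0$, the coordinate fields are linearly independent and $F|_{C_\la}$ is an immersion, whose range is $\sla(p)$ minus the poles by the definition \eqref{eq:sla}. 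For (ii), I would take $N$ to be the unit normal to the tangent plane spanned by $\dot{\ga}_\theta$ and $V_\theta$; as $\dot{\ga}_\theta$ is a unit horizontal tangent vector it generates $T\sla(p)\cap\h$, so $\dot{\ga}_\theta=\pm Z$ with $Z=J(\nuh)$, and fixing the sign of $N$ so that $\dot{\ga}_\theta=Z$ makes each $\ga_\theta$ a characteristic curve.

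For (iii), the Riemannian area element in the coordinates $(\theta,s)$ is $\sqrt{|V_\theta|^2|\dot{\ga}_\theta|^2-\escpr{V_\theta,\dot{\ga}_\theta}^2}\;d\theta\,ds$; using $|\dot{\ga}_\theta|=1$ and that $\escpr{V_\theta,\dot{\ga}_\theta}=-\la\,v$ is precisely the $\dot{\ga}_\theta$-component of $V_\theta$, the longitudinal term cancels and I am left with $\sqrt{v^2+(v'/2)^2}$, which simplifies to the stated expression. To get $\mnh$ and $\escpr{N,T}$, the relation $N\perp\dot{\ga}_\theta$ forces $N\in\text{span}\{J(\dot{\ga}_\theta),T\}$, and imposing $N\perp V_\theta$ together with $|N|=1$ determines $N$ up to sign; reading off the $T$- and $\nuh$-components from $v,v'$ yields $\escpr{N,T}$ and $\mnh=\sqrt{1-\escpr{N,T}^2}$, and fixes the orientation so that $\escpr{N,J(\dot{\ga}_\theta)}=-\mnh$. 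The first shape operator entry is then immediate: differentiating $\escpr{N,\dot{\ga}_\theta}=0$ along $\ga_\theta$ and using the geodesic equation \eqref{eq:geoeq} gives $\escpr{B(Z),Z}=\escpr{N,D_Z\dot{\ga}_\theta}=-2\la\,\escpr{N,J(\dot{\ga}_\theta)}=2\la\,\mnh$.

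The last two entries I would extract from the structure equations \eqref{eq:vmnh} and \eqref{eq:vnt}. Evaluated at $v=Z$, they express $\escpr{D_ZN,\nuh}$ and $\escpr{D_ZN,T}$ through the longitudinal derivatives $Z(\mnh)$ and $Z(\escpr{N,T})$, which are ordinary $s$-derivatives of the explicit functions above; substituting into $\escpr{B(Z),S}=-\escpr{D_ZN,S}$ with $S$ from \eqref{eq:ese} and simplifying (repeatedly via $\sin^2(\tau s)+\tau^2\cos^2(\tau s)=1+(\tau^2-1)\cos^2(\tau s)$) yields $(1-\tau^2)\,\mnh^2$. The genuinely delicate entry is $\escpr{B(S),S}$, since it requires the derivatives $S(\mnh)$ and $S(\escpr{N,T})$ transverse to the characteristic curves, where the explicit $s$-dependence is not directly at hand. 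The key device is that $V_\theta$, being tangent, has no $N$-component, so in the tangent frame $\{Z,S\}$ one finds $V_\theta=-(\la v)\,Z-(\sin(\tau s)\,w/\tau^2)\,S$ with $w:=\sqrt{1+(\tau^2-1)\cos^2(\tau s)}$; hence $S$ is an explicit combination of $Z$ and $V_\theta$. Because $\mnh$ and $\escpr{N,T}$ depend on $s$ alone, their $V_\theta=\ptl/\ptl\theta$ derivatives vanish, so $S(\mnh)$ and $S(\escpr{N,T})$ reduce to known multiples of $Z(\mnh)$ and $Z(\escpr{N,T})$. Feeding these into $\escpr{B(S),S}=-\escpr{D_SN,S}=-\escpr{N,T}\,S(\mnh)+\mnh\,S(\escpr{N,T})$ (the term in $\escpr{N,J(S)}$ drops out, as $J(S)=\escpr{N,T}\,Z$ is orthogonal to $N$) and simplifying produces $\la\tau^2\mnh/(1+(\tau^2-1)\cos^2(\tau s))$. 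The main effort throughout is organizing these trigonometric simplifications and tracking the orientation signs consistently; the transverse derivative computation for $\escpr{B(S),S}$ is the step I expect to be the principal obstacle.
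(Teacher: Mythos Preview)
Your proposal is correct and follows essentially the same approach as the paper: both use the explicit CC-Jacobi field $V_\theta=-(\la v)\,\dot{\ga}_\theta+(v'/2)\,J(\dot{\ga}_\theta)+v\,T$ with $v(s)=\sin^2(\tau s)/\tau^2$, compute the area element from the Gram determinant, read off $\mnh$ and $\escpr{N,T}$ from the explicit normal $N=\big(-v\,J(\dot{\ga}_\theta)+(v'/2)\,T\big)/\sqrt{v^2+(v'/2)^2}$, get $\escpr{B(Z),Z}$ from the geodesic equation, and handle the transverse derivatives in $\escpr{B(S),S}$ by writing $S$ as a combination of $Z$ and $V_\theta$ and using that $\mnh,\escpr{N,T}$ are $\theta$-independent. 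The only cosmetic difference is that the paper uses the slightly cleaner identity $\escpr{B(S),S}=S(\escpr{N,T})/\mnh$ (coming directly from $\escpr{D_SN,T}=\mnh\,\escpr{B(S),S}$ and $\escpr{N,J(S)}=0$), whereas you route through both $S(\mnh)$ and $S(\escpr{N,T})$; these are equivalent via $\mnh^2+\escpr{N,T}^2=1$.
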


\begin{proof}
The map $F:\rr^2\to M$ is $C^\infty$ since, for fixed $\la$, the solutions of \eqref{eq:geoeq} depends differentiably ($C^\infty$) on the initial data. Note that $(\ptl F/\ptl s)(\theta,s)=\dot{\ga}_\theta(s)$, which is a horizontal vector. On the other hand, $V_\theta(s):=(\ptl F/\ptl\theta)(\theta,s)$ is a CC-Jacobi field along $\ga_\theta$ by Lemma~\ref{lem:ccjacobi} (iii). From the computations in the proof of Lemma~\ref{lem:condition} we know that
\[
V_\theta=-(\la\,v)\,\dot{\ga}_\theta+
(v'/2)\,J(\dot{\ga}_\theta)+v\,T,
\]
where the prime $'$ denotes the derivative with respect to $s$, and $v$ is the function given by
\begin{equation}
\label{eq:slav}
v(s):=\frac{1-\cos(2\tau s)}{2\tau^2}=\frac{\sin^2(\tau s)}{\tau^2}.
\end{equation}
The fact that $v(s)>0$ for $s\in (0,\pi/\tau)$ implies that the differential of $F$ has rank two for any $(\theta,s)\in C_\la$, and so $F:C_\la\to M$ is a $C^\infty$ immersion. By the definition of $\s_\la(p)$ in \eqref{eq:sla} it is clear that $F(C_\la)=\s_\la(p)-\{p,p_\la\}$. This proves (i). Now, we define 
\begin{equation}
\label{eq:slan}
N:=\frac{-v\,J(\dot{\ga}_\theta)+(v'/2)\,T}{\sqrt{v^2+(v'/2)^2}},
\end{equation}
which provides a Riemannian unit normal along $S_\la(p)-\{p,p_\la\}$. By using \eqref{eq:nuh} we get $\nuh=-J(\dot{\ga}_\theta)$ and $Z=\dot{\ga}_\theta$. This means that any $\ga_\theta(s)$ with $s\in (0,\pi/\tau)$ is a characteristic curve of $\s_\la(p)$, proving statement (ii). On the other hand, it is clear that
\begin{align*}
d\sla(p)&=\sqrt{|V_\theta|^2-\la^2 v^2} \ d\theta\,ds=
\sqrt{v^2+(v'/2)^2} \ d\theta\,ds
\\
&=\frac{\sin(\tau s)}{\tau^2}\,\sqrt{\sin^2(\tau s)+\tau^2\cos^2(\tau s)}\ d\theta\,ds=\frac{\sin(\tau s)}{\tau^2}\,\sqrt{1+(\tau^2-1)\cos^2(\tau s)}\ d\theta\,ds.
\end{align*}
Moreover, the expressions of $\mnh$ and $\escpr{N,T}$ in statement (iii) can be obtained from \eqref{eq:slan} and \eqref{eq:slav}. Note that $0=Z(\escpr{N,Z})=\escpr{D_ZN,Z}+\escpr{N,D_ZZ}$. From equation \eqref{eq:geoeq}, we get
\[
\escpr{B(Z),Z}=\escpr{N,D_ZZ}=-2\la\,\escpr{N,J(Z)}=2\la\,\escpr{N,\nuh}=2\la\,\mnh.
\]
By taking into account \eqref{eq:vnt}, we obtain
\begin{align*}
Z(\escpr{N,T})&=\escpr{D_ZN,T}+\escpr{N,D_ZT}=\mnh\,\escpr{B(Z),S}-\mnh,
\\
S(\escpr{N,T})&=\escpr{D_SN,T}+\escpr{N,D_ST}=\mnh\,\escpr{B(S),S},
\end{align*}
since the tangent projection of $T$ is $-\mnh\,S$, $D_ZT=-\nuh$ and $D_ST=\escpr{N,T}\,Z$. It follows that
\begin{align*}
\escpr{B(Z),S}&=\frac{Z(\escpr{N,T})}{\mnh}+1=\frac{\escpr{N,T}'}{\mnh}+1=\frac{(1-\tau^2)\,\sin^2(\tau s)}{1+(\tau^2-1)\cos^2(\tau s)}
=(1-\tau^2)\,\mnh^2,
\\
\escpr{B(S),S}&=\frac{S(\escpr{N,T})}{\mnh}=-\la\,Z(\escpr{N,T})=-\la\,\escpr{N,T}'=\frac{\la\,\tau^2\sin(\tau s)}{\big(1+(\tau^2-1)\cos^2(\tau s)\big)^{3/2}},
\end{align*}
where in the second equation we have used that $\escpr{N,T}$ does not depend on $\theta$, and that 
\begin{equation}
\label{eq:esepolar}
S=\frac{-1}{\sqrt{v(s)^2+(v'(s)/2)^2}}\,V_\theta-\la\,\mnh\,Z.
\end{equation}
This completes the proof.
\end{proof}

An easy consequence of Lemma~\ref{lem:slaF} is the following fact, that will play a key role in this work.

\begin{corollary}
\label{cor:integrability}
Let $\s_\la(p)$ be a spherical surface inside a $3$-dimensional space form. Then, for any Riemannian unit normal vector $N$ along $\s_\la(p)$, the function $\mnh^{-1}$ is integrable along $\s_\la(p)$ with respect to the Riemannian area element $d\sla(p)$.
\end{corollary}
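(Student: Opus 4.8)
The plan is to reduce the integrability of $\mnh^{-1}$ to an elementary computation in the polar coordinates $(\theta,s)$ of Lemma~\ref{lem:slaF}, exploiting the fact that the singularity of $\mnh^{-1}$ at the poles is exactly compensated by the degeneration of the area element there. First I would observe that $\mnh$ depends only on the surface and not on the sign of the chosen unit normal, so the statement is independent of the choice of $N$; it therefore suffices to work with the normal $N$ fixed in Lemma~\ref{lem:slaF}~(ii). Recall next that the immersion $F:C_\la\to M$ of Lemma~\ref{lem:slaF}~(i) parameterizes $\s_\la(p)$ minus its two poles, and that the poles form a set of vanishing Riemannian area (being the singular set of a $C^2$ surface, of Hausdorff dimension at most one, as noted in Section~\ref{subsec:surfaces}). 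Hence the integral of $\mnh^{-1}\,d\sla(p)$ may be computed as an integral over the rectangle $C_\la=[0,2\pi]\times(0,\pi/\tau)$ in the $(\theta,s)$ coordinates, where $\tau:=\sqrt{\la^2+\kappa}$.

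The key step is then to substitute the expressions from Lemma~\ref{lem:slaF}~(iii). Writing
\[
\mnh^{-1}=\frac{\sqrt{1+(\tau^2-1)\cos^2(\tau s)}}{\sin(\tau s)},\qquad d\sla(p)=\frac{\sin(\tau s)}{\tau^2}\,\sqrt{1+(\tau^2-1)\cos^2(\tau s)}\;d\theta\,ds,
\]
the two factors of $\sin(\tau s)$ cancel in the product, leaving
\[
\mnh^{-1}\,d\sla(p)=\frac{1+(\tau^2-1)\cos^2(\tau s)}{\tau^2}\;d\theta\,ds,
\]
which is a continuous, bounded integrand on the compact closure of $C_\la$. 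Integrating over $C_\la$ therefore yields a finite value, establishing integrability. A routine evaluation of the elementary integral gives, in fact, $\int_{\s_\la(p)}\mnh^{-1}\,d\sla(p)=\pi^2(\tau^2+1)/\tau^3$.

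The only point requiring care -- and, as remarked in the Introduction, the substance of the statement -- is this cancellation of the $\sin(\tau s)$ factors. Although $\mnh^{-1}$ blows up like $1/\sin(\tau s)$ as $s\to 0^+$ or $s\to(\pi/\tau)^-$, the Riemannian area element vanishes at precisely the same rate as one approaches the poles, so that the resulting measure $\mnh^{-1}\,d\sla(p)$ extends continuously across the singular set. Beyond verifying this cancellation and the trivial integrability of a bounded function over a compact domain, no genuine obstacle remains.
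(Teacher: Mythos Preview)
Your proof is correct and follows exactly the approach implicit in the paper, which simply states the result as ``an easy consequence of Lemma~\ref{lem:slaF}'': you substitute the expressions for $\mnh$ and $d\sla(p)$ from Lemma~\ref{lem:slaF}~(iii), observe the cancellation of the $\sin(\tau s)$ factors, and integrate the resulting bounded function over the compact rectangle.
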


\section{Characterization results and classification of stationary surfaces}
\label{sec:1stvar}

Let $M$ be a $3$-dimensional space form. Our main goal in this section is to establish characterization results for the spherical surfaces $\s_\la(p)$ introduced in \eqref{eq:sla} as critical points of the sub-Riemannian area under a volume constraint. More precisely, we will be able to prove sub-Riemannian counterparts to the classical theorems of Hopf and Alexandrov for constant mean curvature surfaces in  Riemannian $3$-space forms. We will also provide the description of complete volume-preserving area-stationary surfaces in $M$ with non-empty singular set, thus extending previous results in \cite{rr2} and \cite{hr1}. We first introduce basic definitions and review some properties of stationary surfaces that will be key ingredients throughout this section.

\subsection{Volume-preserving area-stationary surfaces}
\label{subsec:statsurf}
\noindent

Let $M$ be a Sasakian sub-Riema\-nnian $3$-manifold and $\varphi_0:\Sg\to M$ an oriented $C^2$ surface immersed in $M$. By a \emph{variation} of $\Sg$ we mean a map $\varphi:I\times\Sg\to M$ (which is assumed to be $C^2$ unless otherwise stated), where $I\subeq\rr$ is an open interval containing the origin, and $\varphi$ satisfies:
\begin{itemize}
\item[(i)] $\varphi(0,p)=\varphi_0(p)$ for any $p\in\Sg$,
\item[(ii)] the map $\varphi_s:\Sg\to M$ given by $\varphi_s(p):=\varphi(s,p)$ is an immersion for any $s\in I$, 
\item[(iii)] there is a compact set $C\subseteq\Sg$ such that $\varphi_{s}(p)=\varphi_0(p)$ for any $s\in I$ and any $p\in\Sg-C$.
\end{itemize}
The \emph{velocity vector field} associated to $\varphi$ is the $C^{1}$ vector field along $\Sg$ defined by $U_p:=(\ptl\varphi/\ptl s)(0,p)$. The \emph{area functional} is given by $A(s):=A(\Sg_{s})=A(\varphi_s(\Sg))$.  We define the \emph{volume functional} $V(s)$ as in \cite[Sect.~2]{bdce}, by taking the signed volume enclosed between $\Sg$ and $\Sg_s$. More precisely
\begin{equation}
\label{eq:volume}
V(s):=\int_{[0,s]\times C}\varphi^*(dM),
\end{equation}
where $dM$ is the Riemannian volume element in $(M,g)$. The variation is \emph{volume preserving} if $V(s)$ is constant for any $s$ small enough. We say that $\Sg$ is \emph{volume-preserving area-stationary} or \emph{area-stationary under a volume constraint} if $A'(0)=0$ for any volume-preserving variation of $\Sg$.  

Let us recall the computation of $A'(0)$ and $V'(0)$. From \eqref{eq:area} and the fact that $\Sg_0$ has vanishing Riemannian area, we have
\begin{equation}
\label{eq:duis}
A(s)=\int_{\Sg-\Sg_0}\mnh_p(s)\,
|\text{Jac}\,\varphi_s|_p\,d\Sg.
\end{equation}
Here $\mnh_p(s):=\mnh\big(\varphi_s(p)\big)$, where $N$ is a $C^1$ vector field along the variation whose restriction to any $\Sg_s$ provides the Riemannian unit normal $N_s$ compatible with the orientations of $\Sg$ and $M$. If $p\in\Sg$ and $\{e_1,e_2\}$ is any orthonormal basis in $T_p\Sg$ then $|\text{Jac}\,\varphi_s|_p$ denotes the squared root of the determinant of the positive definite matrix $G(s):=\big(\escpr{e_i(\varphi_s),e_j(\varphi_s)}\big)_{ij}$ with $i,j=1,2$. Note that $\mnh_p(s)$ is a positive Lipschitz function, for $s$ small enough. Moreover, we have
\[
\mnh_p'(s)=U(\mnh)=\escpr{D_UN_h,\nuh}, \quad |\text{Jac}\,\varphi_s|_p'(s)=\frac{1}{2}\,\sqrt{\text{det}\,G(s)}\,\,
\text{trace}(G'(s)\,G(s)^{-1}),
\]
where $\nuh$ is the horizontal Gauss map defined in \eqref{eq:nuh}. The previous expressions are uniformly bounded as functions of $s$ and $p$ near $s=0$. Hence we can use Lemma~\ref{lem:difint} to differentiate under the integral sign in \eqref{eq:duis}. By reproducing the arguments in \cite[Lem.~4.3]{rr2}, we get
\begin{equation}
\label{eq:aprima}
A'(0)=-2\int_{\Sg-\Sg_0} H\,\escpr{U,N}\,\,d\Sg
-\int_{\Sg-\Sg_0}\divv_\Sg\big(\escpr{U,N}\,(\nuh)^\top\big)\,d\Sg,
\end{equation}
provided the function
\begin{equation}
\label{eq:mc}
-2H:=\divv_\Sg\nuh
\end{equation}
is locally integrable with respect to $d\Sg$. Here $\divv_\Sg\nuh$ and $(\nuh)^\top$ are the Riemannian divergence relative to $\Sg$ and the tangent projection of $\nuh$, respectively. The function $H$ in \eqref{eq:mc} is the (\emph{sub-Riemannian}) \emph{mean curvature} of $\Sg$, as defined in \cite{rr1}, \cite{hrr} and \cite{rosales}. On the other hand, it is known \cite[Lem.~2.1]{bdce} that $V'(0)$ is given by
\begin{equation}
\label{eq:vprima}
V'(0)=\int_\Sg\escpr{U,N}\,d\Sg.
\end{equation} 

We say that $\Sg$ is a \emph{constant mean curvature surface} (CMC surface) if the function $H$ in \eqref{eq:mc} is constant on $\Sg-\Sg_{0}$. As an easy consequence of \eqref{eq:aprima} and \eqref{eq:vprima} it follows that $\Sg$ is volume-preserving area-stationary if and only if there is a constant $\lambda\in\rr$ such that $(A+2\la V)'(0)=0$ for any variation of $\Sg$. In particular $\Sg$ has constant mean curvature $\la$, see \cite[Prop.~4.3]{rosales} for details. 

The regular part $\Sg-\Sg_0$ of a CMC surface is ruled by CC-geodesics of $M$, see for example \cite{rr2}, \cite{chmy} and \cite{hp1}. In precise terms we get this result, which can be proved as in \cite[Thm.~4.8]{rr2}.

\begin{proposition}
\label{prop:ruling}
Let $\Sg$ be an oriented $C^2$ surface immersed inside a Sasakian sub-Riemannian $3$-manifold.  Then, in the regular part $\Sg-\Sg_0$, we have equality $D_ZZ=(2H)\,\nuh$. Therefore, if $\Sg$ has constant mean curvature $H$, then any characteristic curve of $\Sg$ is a CC-geodesic of curvature $H$.
\end{proposition}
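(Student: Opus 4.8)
The plan is to work pointwise on the regular set $\Sg-\Sg_0$ in the orthonormal frame $\{Z,\nuh,T\}$ of $T_pM$ and to show directly that the covariant acceleration $D_ZZ$ is a multiple of $\nuh$ with the correct factor. First I would determine the components of $D_ZZ$ in this frame. Since $Z$ is a unit field, $\escpr{D_ZZ,Z}=\frac12\,Z(|Z|^2)=0$, so there is no $Z$-component. For the $T$-component, the Sasakian identity $D_ZT=J(Z)$ gives $\escpr{D_ZZ,T}=-\escpr{Z,D_ZT}=-\escpr{Z,J(Z)}=0$, because $\escpr{J(Z),Z}=0$ by \eqref{eq:conmute}. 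Hence $D_ZZ=\escpr{D_ZZ,\nuh}\,\nuh$, and the whole problem reduces to computing this single coefficient.

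Next I would evaluate $\escpr{D_ZZ,\nuh}=-\escpr{Z,D_Z\nuh}$ using the formula \eqref{eq:dvnuh} for $D_v\nuh$ with $v=Z$. Since $\escpr{Z,\nuh}=0$ and $|Z|=1$, that formula collapses to $D_Z\nuh=\mnh^{-1}\escpr{D_ZN,Z}\,Z+T$, so that $\escpr{Z,D_Z\nuh}=\mnh^{-1}\escpr{D_ZN,Z}=-\mnh^{-1}\escpr{B(Z),Z}$, where $B$ is the Riemannian shape operator. Therefore $D_ZZ=\mnh^{-1}\escpr{B(Z),Z}\,\nuh$, and it only remains to identify the scalar $\mnh^{-1}\escpr{B(Z),Z}$ with $2H$.

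To close the computation I would expand the defining relation \eqref{eq:mc}, namely $-2H=\divv_\Sg\nuh=\escpr{D_Z\nuh,Z}+\escpr{D_S\nuh,S}$, in the tangent frame $\{Z,S\}$. From the collapsed expression for $D_Z\nuh$ one reads $\escpr{D_Z\nuh,Z}=-\mnh^{-1}\escpr{B(Z),Z}$, while applying \eqref{eq:dvnuh} with $v=S$, together with $\escpr{S,Z}=0$ and $\escpr{S,\nuh}=\escpr{N,T}$ coming from \eqref{eq:ese}, shows that $D_S\nuh$ is a multiple of $Z$, whence $\escpr{D_S\nuh,S}=0$. This yields $2H=\mnh^{-1}\escpr{B(Z),Z}$ and hence $D_ZZ=(2H)\,\nuh$, as claimed.

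For the final assertion, when $H$ is constant each characteristic curve $\ga$ is an integral curve of the unit horizontal field $Z$, so it is $C^2$, parameterized by arc-length and horizontal, with $\dot{\ga}=Z$. Since $J(Z)=J^2(\nuh)=-\nuh$, the identity just proved reads $\dot{\ga}'=D_ZZ=(2H)\,\nuh=-(2H)\,J(\dot{\ga})$, which is precisely the CC-geodesic equation \eqref{eq:geoeq} with $\la=H$; thus $\ga$ is a CC-geodesic of curvature $H$. I expect the only delicate point to be the bookkeeping of signs and the systematic use of the Sasakian formulas in \eqref{eq:dujv} and of the frame relations; once $D_ZZ$ is seen to have vanishing $Z$- and $T$-components, the argument is reduced to the scalar computation of its $\nuh$-component and to matching that scalar with the mean curvature through $\divv_\Sg\nuh$.
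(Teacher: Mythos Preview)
Your proof is correct and self-contained. The paper itself does not include a proof of this proposition; it only refers the reader to \cite[Thm.~4.8]{rr2}, so there is nothing to compare against beyond noting that your direct computation in the frame $\{Z,\nuh,T\}$, using the identities \eqref{eq:dvnuh} and \eqref{eq:mc} already available in Section~\ref{sec:preliminaries}, is exactly the sort of argument one expects.
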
  

The singular set $\Sg_0$ of a CMC surface $\Sg$ is well understood by the results in \cite{chmy} for arbitrary pseudo-Hermitian $3$-manifolds, see also \cite[Sect.~5]{galli}. By using the ruling property in Proposition~\ref{prop:ruling}, the description of $\Sg_0$ given in \cite{chmy} can be stated as follows.

\begin{theorem}[{\cite[Thm.~B]{chmy}}]
\label{th:structure}
Let $\Sg$ be an oriented $C^2$ surface of constant mean curvature $H$ immersed in a Sasakian sub-Riemannian $3$-manifold $M$. Then, the singular set $\Sg_0$ consists of isolated points and $C^1$ curves with non-vanishing tangent vector (singular curves). Moreover, we have:
\begin{itemize}
\item[(i)] $($\cite[Thm.~3.10 and Lem.~3.8]{chmy}$)$ if $p\in\Sg_{0}$ is isolated, then there exist $r>0$ and $\la\in\rr$ with $|\la|=|H|$ such that the set $D_{r}(p):=\{\gamma_{v}(s)\,;\,v\in\h_p,\,|v|=1,\,s\in [0,r)\}$
is an open neighborhood of $p$ in $\Sg$. Here $\ga_v$ denotes the CC-geodesic of curvature $\la$ in $M$ with $\ga_v(0)=p$ and $\dot{\ga}_v(0)=v$. Moreover, the index of the vector field $\mnh\,Z$ at $p$ equals $+1$.
\vspace{0,1cm}
\item[(ii)]  $($\cite[Prop.~3.5 and Cor.~3.6]{chmy}$)$ if $p$ is
contained in a $C^1$ curve $\Ga\subset\Sg_{0}$ then there is a neighborhood $D$ of $p$ in $\Sg$ such that $D-\Gamma$ is the union of two disjoint connected open sets $D^+, D^-\subsetneq\Sg-\Sg_0$. Moreover, for any $q\in\Ga\cap D$ there are exactly two CC-geodesics $\ga_{1}\sub D^+$ and $\ga_{2}\sub D^-$ of curvature $\la$ leaving from $q$ and meeting transversally $\Ga$ at $q$ with opposite initial velocities. The curvature $\la$ does not depend on $q$ and satisfies $|\la|=|H|$.
\end{itemize} 
\end{theorem}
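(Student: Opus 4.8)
The final statement is the structure theorem of Cheng, Hwang, Malchiodi and Yang for the singular set of a CMC surface; the form quoted here is obtained by combining the local description in \cite{chmy} with Proposition~\ref{prop:ruling}, which upgrades the Legendrian straight lines of \cite{chmy} to CC-geodesics of curvature $\la$. The plan is to organize the argument around the single $C^1$ vector field
\[
W:=|N_{h}|\,Z=J(N_{h})
\]
along $\Sg$. Since $\Sg$ is $C^2$, the normal $N$ and hence $N_h$ are $C^1$, so $W$ is $C^1$; identity \eqref{eq:conmute} gives $\langle J(N_h),N_h\rangle=0$ and $J(N_h)$ is horizontal, whence $\langle W,N\rangle=0$, so $W$ is tangent to $\Sg$ and $W_p=0$ precisely when $p\in\Sg_{0}$. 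In this way the whole problem becomes the local study of the zeros of one $C^1$ vector field.

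The key computation is the linearization of $W$ at a singular point $p$. Writing $N(p)=\eps\,T_p$ with $\eps=\pm 1$ and using the Sasakian identities $D_{v}T=J(v)$ and \eqref{eq:dujv} together with $B(v)=-D_{v}N$, one checks that $v(\langle N,T\rangle)$ vanishes at $p$ and obtains, for $v\in T_p\Sg=\h_p$,
\[
dW_p(v)=\eps\,v-J\big(B_p(v)\big),\qquad\text{so}\qquad L:=dW_p=\eps\,\text{Id}-J\circ B_p .
\]
Because $B_p$ is symmetric and $J$ is the rotation by $\pi/2$, a direct determinant computation gives $\det L=1+\det B_p$, independently of $\eps$. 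Hence the sign of $1+\det B_p$ governs the local picture: where it is positive, $p$ is a nondegenerate zero of index $+1$; where it vanishes, $L$ is degenerate with a one-dimensional kernel.

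To feed geometry into this dichotomy I would use Proposition~\ref{prop:ruling}: on $\Sg-\Sg_0$ the characteristic curves are CC-geodesics of curvature $H$, and $\langle B(Z),Z\rangle=2H\,|N_{h}|$. Letting a regular point tend to $p$ along a characteristic curve, continuity of $B$ forces $\langle B_p\zeta,\zeta\rangle=0$ for every limiting characteristic direction $\zeta$, so $\det B_p\leq 0$. If $p$ is isolated, such directions fill the unit circle of $\h_p$, the quadratic form vanishes identically and $B_p=0$; then $L=\eps\,\text{Id}$ has index $+1$, and the CC-geodesics of curvature $\la=\pm H$ issuing from $p$ sweep out the neighborhood $D_r(p)$, giving $|\la|=|H|$. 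If instead $\det B_p=-1$, the form $\langle B_p\cdot,\cdot\rangle$ is indefinite with exactly two null directions, which are the limiting characteristic directions from the two local components $D^{\pm}$; these produce the two transversal CC-geodesics with opposite velocities, and $\Sg_0$ is locally the $C^1$ curve $\ker L$.

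The hard part is exactly what the infinitesimal argument cannot supply: promoting this linearized dichotomy to the global local statements—that $D_r(p)$ is genuinely an open neighborhood of $p$, that a singular curve is an embedded $C^1$ arc with non-vanishing tangent, and that precisely two characteristic CC-geodesics emanate from each of its points—while ruling out the intermediate degeneracies $\det B_p\in(-1,0)$. This requires representing $\Sg$ as a graph in Darboux (pseudo-Hermitian) coordinates and analyzing the degenerate first-order equation, equivalently the flow of $W$, near its zeros. This delicate local analysis is carried out in \cite{chmy}, so at that point I would invoke it rather than reprove it, and then apply Proposition~\ref{prop:ruling} to identify the emanating curves as CC-geodesics of the stated curvature.
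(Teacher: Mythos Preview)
The paper does not prove this theorem: it is quoted from \cite{chmy} (with the explicit attributions in the statement), and the only contribution of the present paper is to use Proposition~\ref{prop:ruling} to phrase the conclusion in terms of CC-geodesics of curvature $\la$ rather than merely characteristic curves. Your proposal ultimately does the same thing---after the linearization discussion you explicitly defer to \cite{chmy} for the hard local analysis and then apply Proposition~\ref{prop:ruling}---so at the level of what is actually proved, your approach and the paper's coincide.

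The extra material you supply---the $C^1$ vector field $W=J(N_h)$, the linearization $dW_p=\eps\,\text{Id}-J\circ B_p$, and the identity $\det dW_p=1+\det B_p$---is correct and gives a pleasant heuristic for why the dichotomy isolated point versus curve should occur. A couple of steps in the heuristic are circular, however. The assertion ``if $p$ is isolated, such directions fill the unit circle of $\h_p$'' is precisely the content of statement (i), not something derivable from isolatedness alone; likewise ``$\Sg_0$ is locally the $C^1$ curve $\ker L$'' anticipates the implicit-function-type argument that you then attribute to \cite{chmy}. Since you flag these as the points requiring \cite{chmy}, there is no genuine gap---only the observation that your linearization analysis motivates the dichotomy but does not reduce the work needed to establish it. Also, the ``intermediate degeneracies $\det B_p\in(-1,0)$'' are not degenerate: $\det dW_p>0$ there, so such a $p$ would already be an isolated nondegenerate zero of index $+1$; the issue is rather that your conclusion $B_p=0$ at isolated points rests on knowing all directions are limiting characteristic directions, which is (i) itself.
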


\begin{remarks}
\label{re:layH}
1. The relation between $\la$ and $H$ depends on $N_p$. If $N_p=T_p$ then $\la=H$, and the CC-geodesics in the statement are characteristic curves of $\Sg$. If $N_p=-T_p$ then $\la=-H$.

2. The regular and singular sets of a $C^1$ surface in the Heisenberg group $\mathbb{M}(0)$ satisfying the constant mean curvature equation in a weak sense have been studied in \cite{chy2} and \cite{chmy2}, see also \cite{galli-ritore2}.
\end{remarks}

The previous result together with the Hopf index theorem for line fields implies the following topological restriction proved in \cite[Thm.~E]{chmy}, see also \cite[Thm.~5.7]{galli}.

\begin{theorem}
\label{th:topology}
Let $\Sg$ be a compact, connected, oriented $C^2$ surface immersed in a Sasakian sub-Riemannian $3$-manifold. If $\Sg$ is a CMC surface, then the genus of $\Sg$ equals $0$ or $1$. Moreover, $\Sg$ is homeomorphic to a sphere if and only if $\Sg$ contains at least one isolated singular point.
\end{theorem}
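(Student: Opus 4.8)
The plan is to apply the Poincar\'e--Hopf index theorem for line fields to the characteristic direction field of $\Sg$, using the structure theorem (Theorem~\ref{th:structure}) to locate the singularities of this field and compute their indices. On the regular part $\Sg-\Sg_0$ the characteristic vector field $Z=J(\nuh)$ is a well-defined, nowhere-vanishing tangent field. However, I would not use $Z$ itself as a vector field, for two reasons: the field $\mnh\,Z$ vanishes along \emph{all} of $\Sg_0$ (not just at isolated points), and the \emph{orientation} of $Z$ cannot be matched across the singular curves. Instead I would work with the associated unoriented line field $[Z]$, whose only singularities turn out to be the isolated singular points.

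First I would examine $[Z]$ near a singular curve $\Ga\subset\Sg_0$. By Theorem~\ref{th:structure}(ii), a neighborhood $D$ of a point $q\in\Ga$ decomposes as $D^+\cup\Ga\cup D^-$, and through each point of $\Ga$ pass exactly two characteristic CC-geodesics, one into $D^+$ and one into $D^-$, meeting $\Ga$ transversally with \emph{opposite} initial velocities. Since opposite vectors span the same line, the line field $[Z]$ extends continuously and without zeros across $\Ga$, transverse to it; thus $\Ga$ produces no singularity of $[Z]$. Because singular curves are $C^1$ with non-vanishing tangent, and cannot terminate at isolated singular points (near which, by Theorem~\ref{th:structure}(i), $\Sg$ is a foliated disk containing no other singular point), they are closed curves, and $[Z]$ is a continuous, nowhere-zero line field on a full neighborhood of each of them.

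Next I would treat the isolated singular points. By Theorem~\ref{th:structure}(i), near such a point $p$ the surface is the disk $D_r(p)=\{\ga_v(s)\}$ swept by CC-geodesics emanating from $p$, and the index of $\mnh\,Z$ at $p$, equivalently of $[Z]$, equals $+1$. Combining the two local pictures, $[Z]$ is a continuous line field on the compact surface $\Sg$ whose only singularities are the finitely many isolated points of $\Sg_0$, each of index $+1$. The Poincar\'e--Hopf theorem for line fields on a closed oriented surface then yields
\[
k=\sum_{p}\operatorname{ind}_{p}[Z]=\chi(\Sg)=2-2g,
\]
where $k$ is the number of isolated singular points and $g$ the genus. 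Since $k\geq0$ we obtain $g\leq1$, proving the genus assertion. For the second claim, if $\Sg$ has an isolated singular point then $k\geq1$, forcing $\chi(\Sg)\geq1$; as $\chi(\Sg)=2-2g$ is even, this gives $\chi(\Sg)=2$, hence $g=0$ and $\Sg$ is a sphere (indeed $k=2$). Conversely, if $\Sg$ is a sphere then $\chi(\Sg)=2=k>0$, so isolated singular points exist.

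The main obstacle I expect is the careful justification that $[Z]$ really extends as a continuous, singularity-free line field across the singular curves: one must verify that the transversal crossing with opposite velocities of Theorem~\ref{th:structure}(ii) genuinely glues the directions on $D^+$ and $D^-$ into a $C^0$ line field, and confirm the global bookkeeping that singular curves are closed and therefore contribute nothing to the index sum. One must also invoke the correct form of the Poincar\'e--Hopf theorem for line fields (where indices may a priori be half-integers), observing that here every relevant index is the integer $+1$.
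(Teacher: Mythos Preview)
Your proposal is correct and follows essentially the same approach the paper indicates: the paper does not spell out a proof but states that the result follows from Theorem~\ref{th:structure} together with the Hopf index theorem for line fields, citing \cite[Thm.~E]{chmy} and \cite[Thm.~5.7]{galli}, which is precisely the argument you have outlined.
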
 

In sub-Riemannian geometry a CMC surface need not be volume-preserving area-stationary. Indeed, we have the following characterization result, see \cite[Thm.~4.17 and Prop.~4.20]{rr2} for a proof in $\mathbb{M}(0)$ and \cite{chy}, \cite{ch2}, \cite{hp2}, \cite{galli} for generalizations with low regularity assumptions and other settings.

\begin{theorem}
\label{th:vpstationary}
Let $\Sg$ be an oriented $C^2$ surface immersed in a Sasakian sub-Riemannian $3$-manifold. Then $\Sg$ is area-stationary under a volume constraint if and only if $\Sg$ has constant mean curvature and the characteristic curves meet orthogonally the singular curves when they exist. Moreover, in such a case, any singular curve in $\Sg$ is of class $C^2$.
\end{theorem}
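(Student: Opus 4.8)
The plan is to reduce the problem to the vanishing of a single boundary integral produced by the first variation, and then to analyze that integral along the singular set. Recall from the discussion following \eqref{eq:vprima} that $\Sg$ is area-stationary under a volume constraint if and only if there is $\la\in\rr$ with $(A+2\la V)'(0)=0$ for \emph{every} variation, and that such a surface automatically has constant mean curvature $H=\la$. I would therefore work in both implications with a fixed $\la$ and the functional $A+2\la V$. Combining \eqref{eq:aprima} and \eqref{eq:vprima}, using that $H\equiv\la$ is constant on $\Sg-\Sg_0$ and that $\Sg_0$ has vanishing Riemannian area, the mean curvature term and the volume term cancel, so that
\[
(A+2\la V)'(0)=-\int_{\Sg-\Sg_0}\divv_\Sg\big(\escpr{U,N}\,(\nuh)^\top\big)\,d\Sg.
\]
Everything then comes down to evaluating this integral of a tangential divergence via the Riemannian divergence theorem on the open set $\Sg-\Sg_0$, while controlling the contribution of the singular set.

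By Theorem~\ref{th:structure}, $\Sg_0$ splits into isolated points and $C^1$ singular curves, and I would treat the two types separately. Since $|(\nuh)^\top|=|\escpr{N,T}|\le 1$ by \eqref{eq:relations}, the field $W:=\escpr{U,N}\,(\nuh)^\top$ is bounded. Around an isolated singular point $p$, Theorem~\ref{th:structure}(i) provides CC-geodesic disks $D_r(p)$; their boundary circles $\{\ga_v(r)\,;\,|v|=1\}$ shrink to $p$ as $r\to0$, and because the associated CC-Jacobi fields vanish at $p$ (Lemma~\ref{lem:ccjacobi}) these circles have Riemannian length tending to $0$. Hence $\int_{\ptl D_r(p)}\escpr{W,n}\,dl\to 0$, so isolated points contribute nothing and only the singular curves produce internal boundary terms.

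The heart of the argument is the boundary term along a singular curve $\Ga$, and this is where I expect the main obstacle. Here I would use the local picture of Theorem~\ref{th:structure}(ii): a neighborhood $D$ satisfies $D-\Ga=D^+\cup D^-$, with characteristic CC-geodesics reaching $\Ga$ transversally from the two sides with opposite velocities. Along $\Ga$ one has $N=\sigma T$ with $\sigma=\pm1$, hence $\escpr{N,T}\to\sigma$, and by \eqref{eq:relations} the field $(\nuh)^\top=\escpr{N,T}\,S$ admits one-sided limits $\nuh^{\pm}$ equal to the one-sided limits of the horizontal Gauss map. The key observation is that $J(N_h)=|N_h|\,Z$ is a $C^1$ vector field on $\Sg$ vanishing exactly on $\Ga$; crossing $\Ga$ transversally reverses its sign to leading order, which forces $\nuh^{-}=-\nuh^{+}$ and $Z^{-}=-Z^{+}$ (in agreement with the opposite velocities). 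Applying the divergence theorem to $D^{\pm}$ and adding, the contribution of $\Ga$ becomes $\int_\Ga\escpr{U,N}\,\escpr{\nuh^{-}-\nuh^{+},\nu}\,dl$ for a unit conormal $\nu$. Writing $\nuh^{\pm}=-J(Z^{\pm})$, using $Z^{-}=-Z^{+}$ and the skew-symmetry \eqref{eq:conmute}, together with $J(\nu)=\pm t_0$ for the unit tangent $t_0$ of $\Ga$, the integrand factor reduces up to a nonzero constant to $\escpr{Z^{+},t_0}$, which measures precisely the failure of the characteristic curves to meet $\Ga$ orthogonally. Consequently
\[
(A+2\la V)'(0)=\sum_{\Ga}\,\mp 2\int_\Ga\escpr{U,N}\,\escpr{Z^{+},t_0}\,dl,
\]
and since $\escpr{U,N}$ may be prescribed arbitrarily along $\Ga$, this vanishes for all variations if and only if $\escpr{Z^{+},t_0}\equiv0$ on every singular curve, that is, the characteristic curves meet the singular curves orthogonally. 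This establishes both implications.

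Finally, for the regularity assertion I would assume $\Sg$ is volume-preserving area-stationary, so that it is CMC and orthogonal along each $\Ga$, and upgrade $\Ga$ from $C^1$ to $C^2$ by reproducing the argument of \cite[Prop.~4.20]{rr2}: a neighborhood of $\Ga$ is parametrized by the $C^\infty$ flow of characteristic CC-geodesics of curvature $\la$, and the orthogonality condition realizes $\Ga$ as an orthogonal trajectory of this smooth field, to which an implicit-function/ODE argument applies to yield $\Ga\in C^2$.
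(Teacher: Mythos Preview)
The paper does not supply its own proof of this theorem; it is quoted as background, with the proof delegated to \cite[Thm.~4.17 and Prop.~4.20]{rr2} and to the generalizations in \cite{chy}, \cite{ch2}, \cite{hp2}, \cite{galli}. Your proposal reconstructs precisely that argument: use the Lagrange-multiplier reformulation stated after \eqref{eq:vprima}, combine \eqref{eq:aprima} and \eqref{eq:vprima} so that on a CMC surface only the divergence term survives, apply the Riemannian divergence theorem on $\Sg-\Sg_0$, discard isolated singular points by a length argument, and identify the boundary contribution along each singular curve with the tangential component $\escpr{Z^{+},t_0}$. This is exactly the strategy of \cite[Thm.~4.17]{rr2}, and your treatment of the $C^2$ regularity via the orthogonal-trajectory argument of \cite[Prop.~4.20]{rr2} matches what the paper cites.

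One point worth tightening: your justification that $\nuh^{-}=-\nuh^{+}$ and $Z^{-}=-Z^{+}$ via ``$J(N_h)=|N_h|\,Z$ reverses sign to leading order'' is more heuristic than necessary. A $C^1$ vector field vanishing on a curve need not reverse sign in general. The conclusion you want is already contained verbatim in Theorem~\ref{th:structure}(ii) together with Remarks~\ref{re:layH}: the CC-geodesics from $D^+$ and $D^-$ are characteristic curves meeting $\Ga$ with \emph{opposite initial velocities}, which gives $Z^{+}=-Z^{-}$ (and hence $\nuh^{+}=-\nuh^{-}$) directly. With that substitution your computation of the boundary integrand as $\pm 2\,\escpr{U,N}\,\escpr{Z^{+},t_0}$ goes through cleanly, and the equivalence with orthogonality follows since $\escpr{U,N}$ can be prescribed arbitrarily along $\Ga$.
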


\begin{example}
\label{ex:slacmc}
Let $M$ be a $3$-dimensional space form. Consider a spherical surface $\s_\la(p)$ as defined in \eqref{eq:sla}. By using Lemma~\ref{lem:slaF} (ii), equation~\eqref{eq:geoeq} and Proposition~\ref{prop:ruling} we deduce that $\s_\la(p)$ has constant mean curvature $H=\la$. Moreover, as $\sla(p)$ has two singular points we conclude from Theorem~\ref{th:vpstationary} that $\sla(p)$ is area-stationary under a volume constraint. The same argument shows that any plane $\mathcal{L}_\la(p)$ as in \eqref{eq:lla} is volume-preserving area-stationary with $H=0$. 
\end{example}

\subsection{Characterization results}
\label{subsec:classresults}
\noindent

Now we can prove a first characterization result for the spherical surfaces $\s_\la(p)$. This may be seen as a sub-Riemannian counterpart to Hopf uniqueness theorem, see \cite[Chap.~6]{hopf} and \cite{chern}, which states that any constant mean curvature sphere immersed inside a complete Riemannian $3$-manifold of constant sectional curvature must be round.

\begin{theorem}[A Hopf theorem for sub-Riemannian $3$-space forms]
\label{th:hopf}
If $\Sg$ is a topological sphere of class $C^2$ and with constant mean curvature immersed inside a $3$-dimensional space form $M$, then $\Sg$ coincides with one of the surfaces $\s_\la(p)$ defined in \eqref{eq:sla}.
\end{theorem}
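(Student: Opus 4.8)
The plan is to prove that an arbitrary CMC topological sphere $\Sg$ coincides with some $\s_\la(p)$ by showing it is ``built from CC-geodesics in exactly the same way.'' The key structural facts we are handed are: by Theorem~\ref{th:topology}, a CMC topological sphere must contain at least one isolated singular point; by Theorem~\ref{th:structure}(i), near an isolated singular point $p$ the surface is foliated by CC-geodesics of some curvature $\la$ with $|\la|=|H|$ emanating from $p$ in all unit horizontal directions, with the index of the line field equal to $+1$; and by Proposition~\ref{prop:ruling}, the characteristic curves of $\Sg$ are globally CC-geodesics of curvature $H$. The strategy is to follow these characteristic CC-geodesics out of the isolated singular point $p$ and show they reassemble precisely into the set $\s_\la(p)$ of \eqref{eq:sla}.

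First I would fix an isolated singular point $p\in\Sg_0$ (guaranteed to exist) and, after possibly replacing $N$ by $-N$ so that $N_p=T_p$, arrange by Remark~\ref{re:layH}(1) that $\la=H$ and that the CC-geodesics $\ga_v$ of curvature $\la$ leaving $p$ are exactly the characteristic curves of $\Sg$. The heart of the argument is to show these characteristic curves fill out all of $\Sg$ and that they must \emph{meet again} at a second point, forcing the arithmetic condition $\la^2+\kappa>0$ of Lemma~\ref{lem:condition}. Since $\Sg$ is compact, the characteristic CC-geodesics cannot escape to infinity; combined with the local product structure near $p$ from Theorem~\ref{th:structure}(i), one follows the flow $F(\theta,s)=\ga_\theta(s)$ of characteristic geodesics and argues that, because $\Sg$ is a topological sphere and the line field has index $+1$ at $p$, there can be no singular \emph{curves} and exactly one other isolated singular point $p_\la$ where all the characteristics reconverge. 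This is precisely the meeting condition analyzed in Lemma~\ref{lem:condition}, which then yields $\la^2+\kappa>0$ and pins the common length to $\pi/\sqrt{\la^2+\kappa}$.

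Once we know the characteristic curves all leave $p$ with unit horizontal velocity, all have curvature $\la$, and all reconverge at a single point $p_\la$ after length $\pi/\sqrt{\la^2+\kappa}$, the set-theoretic description of $\Sg$ matches \eqref{eq:sla} verbatim: $\Sg=\{\ga_v(s)\,;\,v\in\h_p,\,|v|=1,\,0\le s\le\pi/\sqrt{\la^2+\kappa}\}=\s_\la(p)$. I would close the identification by noting that $\Sg$ and $\s_\la(p)$ share the same south pole $p$, the same curvature $\la=H$, and are both ruled by the \emph{same} family of CC-geodesics (the solutions of \eqref{eq:geoeq} with the prescribed initial data are unique), so the two surfaces literally coincide as subsets of $M$; one should note $\la\ge0$ may be assumed after the orientation normalization, matching the convention in the definition of $\s_\la(p)$.

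The main obstacle I expect is the global topological step: ruling out singular \emph{curves} and showing the characteristics reconverge at a \emph{single} antipodal isolated point rather than wrapping, branching, or creating additional singular structure. Theorem~\ref{th:structure} describes $\Sg_0$ only locally, and Theorem~\ref{th:topology} tells us the genus is $0$ with at least one isolated point, but converting this into ``exactly two isolated singular points and no singular curves, with all characteristics shared between them'' requires the index/Hopf argument to be run carefully on the sphere and requires using that the meeting behaviour of CC-geodesics of fixed curvature from a point in a space form is rigid (Lemma~\ref{lem:condition}). Controlling the global behaviour of the characteristic flow $F(\theta,s)$ — in particular showing $s(\theta)$ is constant and that no characteristic terminates on a singular curve — is where the real work lies; the remaining identification is then essentially formal.
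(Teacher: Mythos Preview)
Your overall strategy matches the paper's: extract an isolated singular point $p$ via Theorem~\ref{th:topology}, normalize $N_p=T_p$, parameterize a neighborhood of $p$ by the flow $F(\theta,s)=\ga_\theta(s)$ of CC-geodesics of curvature $\la=H$ via Theorem~\ref{th:structure}(i), and then push this parameterization globally.

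Where you and the paper differ is in resolving your ``main obstacle''. You try to argue directly that topology forces a second isolated singular point at which all characteristics reconverge, and then invoke Lemma~\ref{lem:condition}; but as you yourself note, showing that no characteristic terminates on a singular curve and that they all meet at a \emph{single} point is not straightforward from that direction. The paper runs the contrapositive, which dissolves the difficulty entirely: if $\la^2+\kappa\le 0$, then by the explicit computation \eqref{eq:vthetacases} the vertical component $v_\theta(s)$ of the CC-Jacobi field never vanishes for $s>0$, so no point $\ga_\theta(s)$ with $s>0$ can be singular. The characteristics therefore extend indefinitely and fill $\Sg\setminus\{p\}$, making $p$ the \emph{unique} singular point of $\Sg$ --- contradicting the Hopf index theorem on a sphere. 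This forces $\la^2+\kappa>0$ without any separate argument ruling out singular curves or establishing reconvergence; once you have it, the identification with $\s_\la(p)$ is immediate from \eqref{eq:sla}.

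One small correction: once $N_p=T_p$ is fixed you cannot further flip orientation to force $\la\ge 0$. The paper handles $\la<0$ by observing that the same geodesics, traversed in reverse from the cut point $p_\la$, exhibit $\Sg$ as $\s_{-\la}(p_\la)$.
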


\begin{proof}
First we apply Theorem~\ref{th:topology} to deduce that $\Sg$ contains an isolated singular point $p$. We choose the unit normal $N$ to $\Sg$ such that $N_p=T_p$. Let $\kappa$ be the Webster scalar curvature of $M$, and $\la$ the mean curvature of $\Sg$ with respect to $N$. By Theorem~\ref{th:structure} (i) and Remarks~\ref{re:layH} we can parameterize a small open neighborhood of $p$ in $\Sg$ by means of the flow of all the CC-geodesics $\ga_v$ of curvature $\la$ in $M$ leaving from $p$. By completeness any $\ga_v$ can be extended as a characteristic curve of $\Sg$ until it meets a singular point of $\Sg$. If $\la^2+\kappa\leq 0$ then equation \eqref{eq:vthetacases} would imply that the vertical component of the associated CC-Jacobi fields never vanishes along $\Sg-\{p\}$. Thus, the point $p$ would be the unique singular point of $\Sg$, a contradiction with the Hopf index theorem for line fields since $\Sg$ is topologically a sphere. Hence $\la^2+\kappa>0$. If $\la\geq 0$, then it is clear that $\Sg=\s_\la(p)$. In the case $\la<0$ we obtain $\Sg=\s_{-\la}(p_\la)$, where $p_\la$ is the first cut point of the CC-geodesics $\ga_v$.
\end{proof}

\begin{remark}
Recently Cheng, Chiu, Hwang and Yang have proved a Hopf theorem in the Heisenberg group $\mathbb{H}^n$ with $n\geq 2$ under the additional assumption that $\Sg$ is umbilic \cite[Cor.~$A'$]{cchy}.
\end{remark}

The ideas in the previous proof can be used to obtain the classification of CMC surfaces with isolated singular points in $3$-dimensional space forms. 

\begin{theorem}
\label{th:isolated}
Let $\Sg$ be a complete, connected and orientable $C^2$ surface immersed inside a $3$-dimensional space form $M$. If $\Sg$ has constant mean curvature and at least one isolated singular point, then $\Sg$ is either a spherical surface $\s_\la(p)$ as in \eqref{eq:sla}, or a plane $\mathcal{L}_\la(p)$ as in \eqref{eq:lla}.
\end{theorem}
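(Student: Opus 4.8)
The plan is to follow the strategy used in the proof of Theorem~\ref{th:hopf}, now treating both the isolated singular point and the possibility that $\la^2+\kappa\leq 0$. First I would invoke Theorem~\ref{th:structure}~(i) together with Remarks~\ref{re:layH}: choosing the unit normal $N$ so that $N_p=T_p$ at a given isolated singular point $p$, there is $\la\in\rr$ with $|\la|$ equal to the mean curvature, and a small neighborhood $D_r(p)$ of $p$ in $\Sg$ is foliated by the CC-geodesics $\ga_v$ of curvature $\la$ leaving from $p$ with $v\in\h_p$, $|v|=1$. Since $\Sg$ is complete, each such $\ga_v$ extends as a characteristic curve of $\Sg$ for all $s\geq 0$ until, if ever, it reaches another singular point.

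The dichotomy then splits according to the sign of $\la^2+\kappa$, exactly as in Theorem~\ref{th:hopf}. If $\la^2+\kappa>0$, then by Lemma~\ref{lem:condition} all the CC-geodesics $\ga_v$ meet again at a first cut point $p_\la$ at arc-length $\pi/\sqrt{\la^2+\kappa}$, and the vertical component of the associated CC-Jacobi field vanishes there by \eqref{eq:vthetacases}. This forces the image to close up into a spherical surface; if $\la\geq 0$ we get $\Sg=\s_\la(p)$ directly, while if $\la<0$ we reverse roles and obtain $\Sg=\s_{-\la}(p_\la)$. If instead $\la^2+\kappa\leq 0$, equation \eqref{eq:vthetacases} shows the vertical component of the CC-Jacobi field never vanishes for $s>0$, so no $\ga_v$ can return to a singular point; as in Remarks~\ref{re:pozo}(3) the image is the immersed plane $\mathcal{L}_\la(p)$ of \eqref{eq:lla}. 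In either case connectedness of $\Sg$ guarantees that the surface produced by the flow exhausts all of $\Sg$.

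The main obstacle I anticipate is showing that $\Sg$ has \emph{no} singular curves and no further isolated singular points beyond those forced by the construction, so that the flow of $\ga_v$ really sweeps out all of $\Sg$ rather than merely a proper subset bounded by additional singularities. In the sphere case of Theorem~\ref{th:hopf} this was settled by the Hopf index theorem for line fields once the topology was pinned down as a sphere; here, without a compactness or genus hypothesis, I would argue that the parameterization $F(\theta,s)=\ga_v(s)$ of Lemma~\ref{lem:slaF} remains a local diffeomorphism onto $\Sg$ off the poles (equivalently, onto $\Sg$ off the single singular point $p$ in the plane case), using that $\Sg$ is a $C^2$ CMC surface ruled by these CC-geodesics (Proposition~\ref{prop:ruling}) and that the CC-Jacobi field controls exactly when $F$ degenerates. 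Establishing that this flow image is both open and closed in the connected surface $\Sg$ is the delicate point, and is where completeness of $\Sg$ and the non-vanishing of the CC-Jacobi field in the case $\la^2+\kappa\leq 0$ do the essential work.
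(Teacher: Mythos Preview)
Your proposal is correct and matches the paper's approach exactly: the paper does not give a separate proof of Theorem~\ref{th:isolated} but simply states that ``the ideas in the previous proof can be used,'' referring to the proof of Theorem~\ref{th:hopf}. You have correctly identified how to adapt that argument, replacing the Hopf index contradiction in the case $\la^2+\kappa\leq 0$ with the conclusion that $\Sg=\mathcal{L}_\la(p)$, and your observation that the non-vanishing of the vertical component $v_\theta$ of the CC-Jacobi field (from \eqref{eq:vthetacases}) detects exactly when the flow $F(\theta,s)$ reaches a singular point is the key mechanism that makes the open-and-closed argument go through.
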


In \cite[Thm.~6.8]{rr2} and \cite[Thm.~5.9]{hr1} it was proved, by means of an explicit computation of the CC-geodesics, that any singular curve of a $C^2$ volume-preserving area-stationary surface in the Heisenberg group $\mm(0)$ or in the unit sphere $\mm(1)$ is a piece of a CC-geodesic. Here we are able to generalize this fact to arbitrary $3$-dimensional space forms by using the explicit expression of the vertical component of a CC-Jacobi field given in Lemma~\ref{lem:jacobicon}.

\begin{theorem}
\label{th:singcurves}
Let $\Sg$ be a complete and orientable $C^2$ surface immersed inside a $3$-dimensional space form $M$. If $\Sg$ is volume-preserving area-stationary then any singular curve of $\Sg$ is a complete CC-geodesic in $M$. 
\end{theorem}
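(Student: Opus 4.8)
The plan is to reduce the statement to a single scalar condition on the singular curve and then to establish that condition through the explicit vertical component of a CC-Jacobi field. First I would invoke Theorem~\ref{th:vpstationary}: since $\Sg$ is volume-preserving area-stationary it has constant mean curvature $\la$, every singular curve is of class $C^2$, and the characteristic curves meet the singular curves orthogonally. Fix a singular curve $\Ga$ and parameterize it by arc-length, so that $\dot\Ga$ is a unit horizontal field (along $\Ga$ the tangent plane is $\h$). By Theorem~\ref{th:structure}~(ii) a one-sided neighborhood of $\Ga$ is foliated by characteristic curves, which by Proposition~\ref{prop:ruling} are CC-geodesics $\ga_\eps$ of curvature $\la$ emanating from $\Ga(\eps)$; orthogonality then forces $\dot\ga_\eps(0)=U(\eps)$ with $U(\eps)=\pm J(\dot\Ga(\eps))$.

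Next I would introduce the variation $F(\eps,s):=\ga_\eps(s)$ and its CC-Jacobi field $V_\eps:=(\ptl F/\ptl\eps)(\eps,s)$ provided by Lemma~\ref{lem:ccjacobi}, noting $V_\eps(0)=\dot\Ga(\eps)$. Writing $v_\eps:=\escpr{V_\eps,T}$, the fact that $V_\eps(0)$ is horizontal together with \eqref{eq:jacobi1} gives $v_\eps(0)=0$ and $v_\eps'(0)=\pm 2\neq 0$; differentiating \eqref{eq:jacobi1} once more and using \eqref{eq:geoeq}, \eqref{eq:dujv} and \eqref{eq:conmute} yields $v_\eps''(0)=2\,\escpr{\dot\Ga',J(\dot\Ga)}$, where the prime on $\dot\Ga$ denotes the covariant derivative along $\Ga$. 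On the other hand, \eqref{eq:dujv} shows that both $\escpr{\dot\Ga',\dot\Ga}$ and $\escpr{\dot\Ga',T}$ vanish, so $\dot\Ga'=\escpr{\dot\Ga',J(\dot\Ga)}\,J(\dot\Ga)$. Hence $\Ga$ satisfies the geodesic equation \eqref{eq:geoeq}, and is therefore a CC-geodesic of curvature $-\mu/2$, precisely when the function $\mu(\eps):=\escpr{\dot\Ga'(\eps),J(\dot\Ga(\eps))}=v_\eps''(0)/2$ is constant.

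The remaining and genuinely hard point is to prove that $\mu$ is constant. Here I would use Lemma~\ref{lem:jacobicon} with $\la^2+\kappa>0$ to write $v_\eps$ explicitly in terms of $\mu(\eps)$ and $m:=\sqrt{\la^2+\kappa}$, and observe that all its zeros are simple; consequently, at any zero $s_1>0$ one has $V_\eps(s_1)=(v_\eps'(s_1)/2)\,J(\dot\ga_\eps(s_1))\neq 0$, so a characteristic curve can re-meet the singular set only along another singular curve and never at an isolated singular point. Tracking the first positive zero $s_1(\eps)$ of $v_\eps$, the points $\ga_\eps(s_1(\eps))$ sweep out a second singular curve to which the same analysis applies, producing a relation between $\mu$ and the analogous curvature function there; the plan is to combine this with the completeness of $\Sg$ and the fact that $\Ga$ is closed in $\Sg$ to propagate the relation and force $\mu$ to be constant along all of $\Ga$. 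Completeness of $\Ga$ as a CC-geodesic then follows, since $\Ga$ is a closed curve with non-vanishing tangent inside the complete surface $\Sg$ and CC-geodesics in the complete manifold $M$ are defined on all of $\rr$. The main obstacle is exactly this global consistency step: a direct computation shows that $\mu$ is invisible to the $C^2$-data of $\Sg$ along $\Ga$ (for instance the Riemannian shape operator of $\Sg$ along $\Ga$ turns out to be independent of $\mu$), so constancy cannot be read off locally and must instead be extracted from the way the characteristic CC-geodesics interconnect the components of the singular set.
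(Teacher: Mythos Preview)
Your setup and reduction are correct and match the paper: you parameterize $\Ga$ by arc-length, introduce the CC-Jacobi field $V_\eps$ along the orthogonal characteristic CC-geodesics, compute the initial data $v_\eps(0)=0$, $v'_\eps(0)=\pm 2$, $v''_\eps(0)=2\mu(\eps)$, and reduce the theorem to showing that $\mu$ is constant. You also correctly identify that a positive zero $s_1(\eps)$ of $v_\eps$ yields a second singular curve.

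The decisive step, however, is missing. You propose to extract constancy of $\mu$ by relating it to the curvature of the second singular curve and then ``propagating'' via completeness, but you give no mechanism for this, and none is apparent. The paper's argument is far more direct and uses the orthogonality condition a \emph{second} time, at $\Ga_0$. Parameterize the second singular curve as $\Ga_0(\eps):=\ga_\eps(s_1(\eps))$, so that $\dot\Ga_0(\eps)=V_\eps(s_1(\eps))+s_1'(\eps)\,\dot\ga_\eps(s_1(\eps))$. You already observed that $V_\eps(s_1(\eps))=(v'_\eps(s_1)/2)\,J(\dot\ga_\eps(s_1))$ is orthogonal to $\dot\ga_\eps(s_1)$; but by Theorem~\ref{th:vpstationary} the characteristic curve $\ga_\eps$ also meets $\Ga_0$ orthogonally, whence $\escpr{\dot\Ga_0,\dot\ga_\eps}=0$ forces $s_1'(\eps)=0$. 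Once $s_1$ is constant, the explicit formula from Lemma~\ref{lem:jacobicon} (in which the first positive zero depends injectively on $\mu(\eps)$) immediately gives $\mu$ constant on the interval in question. No iteration or global propagation is needed; the ``invisibility'' you worry about is resolved precisely by the orthogonality at $\Ga_0$.

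A second gap: you invoke Lemma~\ref{lem:jacobicon} only for $\la^2+\kappa>0$, but the theorem covers all space forms. When $\tau:=4(\la^2+\kappa)\leq 0$ the function $v_\eps$ may have \emph{no} positive zero (for $\tau<0$ this happens exactly when $\mu(\eps)\leq\sqrt{-\tau}$), so there is no second singular curve to work with. The paper treats this by a connectedness argument: on the open set of parameters where a first positive zero exists, the orthogonality trick above shows $\mu$ is locally constant; combined with continuity of $\mu$ this forces $\mu$ to be constant on the whole interval.
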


\begin{proof}
Let $\Ga:I\to \Sg$ be a connected and relatively compact open piece of a singular curve in $\Sg$. By Theorems~\ref{th:vpstationary} and~\ref{th:structure} we can suppose that $\Ga$ is a $C^2$ horizontal curve parameterized by arc-length. This implies that $\dot{\Ga}'=h\,J(\dot{\Ga})$, where $h:I\to\rr$ is the continuous function defined by $h:=\escpr{\dot{\Ga}',J(\dot{\Ga})}$. To prove the statement it suffices by \eqref{eq:geoeq} to show that $h$ is a constant function.

We take a unit normal $N$ to $\Sg$ for which $N=T$ along $\Ga$. We know that $\Sg$ has constant mean curvature $\la$ with respect to $N$. Let $F:I\times [0,+\infty)\to M$ be the $C^1$ map defined by $F(\eps,s):=\ga_\eps(s)$, where $\ga_\eps:[0,+\infty)\to M$ is the CC-geodesic of curvature $\la$ with $\ga_\eps(0)=\Ga(\eps)$ and $\dot{\ga}_\eps(0)=J(\dot{\Ga}(\eps))$. By using Theorem~\ref{th:structure} (ii), Remarks~\ref{re:layH}, Theorem~\ref{th:vpstationary} and the completeness of $\Sg$, we deduce that any $\ga_\eps(s)$ with $s>0$ is a characteristic curve of $\Sg$ until it meets a singular point. 
We denote $V_\eps(s):=(\ptl F/\ptl\eps)(\eps,s)$.  From Lemma~\ref{lem:ccjacobi} (iii) this is a CC-Jacobi field along $\ga_\eps$. Let $v_\eps:=\escpr{V_\eps,T}$.  By Lemma~\ref{lem:ccjacobi} (ii), equation \eqref{eq:jacobi1} and equality $V_\eps(0)=\dot{\Ga}(\eps)$, we get
\begin{equation}
\label{eq:vtheta2}
V_\eps=-(\la\,v_\eps)\,\dot{\ga}_\eps+(v'_\eps/2)\,J(\dot{\ga}_\eps)+v_\eps\, T.
\end{equation}

\vspace{0,1cm}
\emph{Claim $(*)$.} Suppose that there is an open interval $I'\subeq I$ and a positive function $s(\eps)$ defined on $I'$ such that $v_\eps(s(\eps))=0$ and $V_\eps(s(\eps))\neq 0$ for any $\eps\in I'$. Then $s(\eps)$ is constant on $I'$.

To prove the claim we proceed as follows. Consider the curve $\Ga_0(\eps):=F(\eps,s(\eps))$, $\eps\in I'$. Two tangent vectors to $\Sg$ along $\Ga_0$ are given by $(\ptl F/\ptl\eps)(\eps,s(\eps))=V_\eps(s(\eps))$ and $(\ptl F/\ptl s)(\eps,s(\eps))=\dot{\ga}_\eps(s(\eps))$. These vectors are horizontal and orthogonal by \eqref{eq:vtheta2}. As a consequence, $\Ga_0(\eps)$ is a singular curve of $\Sg$. Up to a reparameterization we can suppose by Theorem~\ref{th:vpstationary} that $\Ga_0$ is a $C^2$ curve. Since the differential of $F:I'\times [0,+\infty)\to\Sg$ has rank two for any pair $(\eps,s(\eps))$ we deduce from the inverse function theorem that $s(\eps)$ is a $C^1$ function. Moreover, by the definition of $\Ga_0(\eps)$ we have
\[
\dot{\Ga}_0(\eps)=V_\eps(s(\eps))+s'(\eps)\,\dot{\ga}_\eps(s(\eps)).
\]
As $\Sg$ is volume-preserving area-stationary, we conclude from the orthogonality condition in Theorem~\ref{th:vpstationary} that $\dot{\Ga}_0(\eps)\,\bot\,\dot{\ga}_\eps(s(\eps))$ and so, $s'(\eps)=0$ for any $\eps\in I'$. This proves the claim.

\vspace{0,1cm}
Now we compute the initial conditions of $v_\eps$. Clearly $v_\eps(0)=0$ since $V_\eps(0)=\dot{\Ga}(\eps)$. Evaluating \eqref{eq:vtheta2} at $s=0$ we see that $v_\eps'(0)=-2$. From \eqref{eq:vtheta2} we also obtain
\[
V_\eps'(0)=(2\la)\,\dot{\ga}_\eps(0)+(v_\eps''(0)/2)\,J(\dot{\ga}_\eps(0))
-J(\dot{\ga}_\eps)'(0)-2\,T_{\ga_\eps(0)}.
\]
Note that $J(\dot{\ga}_\eps)'=(2\la)\,\dot{\ga}_\eps-T$ by \eqref{eq:dujv} and \eqref{eq:geoeq}. Therefore
\[
V_\eps'(0)=-(v_\eps''(0)/2)\,\dot{\Ga}(\eps)-T_{\Ga(\eps)}.
\]
On the other hand, Lemma~\ref{lem:ccjacobi} (i) implies $[\dot{\ga}_\eps,V_\eps]=0$, so that $V_\eps'=D_{\dot{\ga}_\eps}V_\eps=D_{V_\eps}\dot{\ga}_\eps$ along $\ga_\eps$. Hence
\[
V_\eps'(0)=D_{V_\eps}J(\dot{\Ga})=J\big(\dot{\Ga}'(\eps)\big)-T_{\Ga(\eps)}=-\big(h(\eps)\,\dot{\Ga}(\eps)+T_{\Ga(\eps)}\big),
\]
where we have used \eqref{eq:dujv} and $\dot{\Ga}'=h\,J(\dot{\Ga})$. The two previous equalities for $V_\eps'(0)$ yield $v''_\eps(0)=2\,h(\eps)$. 

Let $\kappa$ be the Webster scalar curvature of $M$ and denote $\tau:=4\,(\la^2+\kappa)$. From Lemma~\ref{lem:jacobicon} we can obtain the explicit expression of $v_\eps$ depending on the sign of $\tau$. Suppose that $\tau<0$. Then, we have
\[
v_\eps(s)=\frac{2}{\sqrt{-\tau}}\left[\frac{h(\eps)}{\sqrt{-\tau}}\,
\big(\!\cosh(\sqrt{-\tau}\,s)-1\big)-\sinh(\sqrt{-\tau}\,s) \right].
\]
It follows that $v_\eps(s)=0$ for some $s>0$ if and only if $\phi(s)=h(\eps)/\sqrt{-\tau}$, where $\phi:(0,+\infty)\to\rr$ is defined by $\phi(s):=\sinh(\sqrt{-\tau}\,s)/(\cosh(\sqrt{-\tau}\,s)-1)$. Note that $\phi$ is decreasing on $(0,+\infty)$ and $\phi(0,+\infty)=(1,+\infty)$. Hence, there is a positive solution of equation $v_\eps(s)=0$ if and only if $h(\eps)>\sqrt{-\tau}$. If $h=\sqrt{-\tau}$ on $I$ then the statement is proved. Otherwise, we can find $\eps_0\in I$ such that $h(\eps_0)\neq\sqrt{-\tau}$.  After reversing the orientation of $\Ga$ if necessary we can suppose that $h(\eps_0)>\sqrt{-\tau}$. Consider the set $A:=\{\eps\in I\,;\,h(\eps)=h(\eps_0)\}$, which is clearly closed in $I$. Let us see that $A$ is also open in $I$, then proving that $h$ is constant. Take $\eps_1\in A$. By continuity there is an open interval $I'\subeq I$ with $\eps_1\in I'$ and $h>\sqrt{-\tau}$ on $I'$. Thus, for any $\eps\in I'$, there is a unique $s(\eps)>0$ for which $v_\eps(s(\eps))=0$. From the definition of $s(\eps)$ we get $v_\eps'(s(\eps))=2$ and so, $V_\eps(s(\eps))=J(\dot{\ga}_\eps(s(\eps)))\neq 0$ by \eqref{eq:vtheta2}. By applying the Claim $(*)$ we conclude that $s(\eps)$ is constant on $I'$. Hence $h(\eps)$ is constant on $I'$ and $I'\subeq A$, as desired. This completes the proof when $\tau<0$. If $\tau>0$ or $\tau=0$, then the function $v_\eps(s)$ is respectively given by $h(\eps)\,s^2-2s$ or $2\tau^{-1/2}\,\big[h(\eps)\,\tau^{-1/2}\,(1-\cos(\sqrt{\tau}\,s))-\sin(\sqrt{\tau}\,s)\big]$, and we can reason as in the case $\tau<0$ to finish the proof. 
\end{proof}

Now we can establish a sub-Riemannian counterpart to a classical theorem of Alexandrov \cite{alexandrov}, which states that any compact and embedded constant mean curvature surface inside the Euclidean space, the hyperbolic space or an open hemisphere of $\mathbb{S}^3$ must be a round sphere. The result was already proved for the Heisenberg group $\mm(0)$ in \cite[Thm.~6.10]{rr2}.  

\begin{theorem}[An Alexandrov theorem in $\e$]
\label{th:alexandrov}
Let $\Sg$ be a compact, connected, orientable $C^2$ surface immersed in some model space $\e$ with $\kappa\leq 0$ or inside an open hemisphere of $\e$ with $\kappa>0$. If $\Sg$ is volume-preserving area-stationary then $\Sg$ coincides with one of the spherical surfaces $\s_\la(p)$ defined in \eqref{eq:sla}.
\end{theorem}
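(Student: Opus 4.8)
The plan is to reduce everything to the Hopf-type Theorem~\ref{th:hopf} by showing that $\Sg$ must contain an isolated singular point; the moving-plane argument of the classical Riemannian proof plays no role here and is replaced entirely by the rigidity of CC-geodesics in the simply connected model, recorded in Proposition~\ref{prop:geomodel}. Since $\Sg$ is volume-preserving area-stationary, Theorem~\ref{th:vpstationary} tells us that $\Sg$ is a CMC surface, say of mean curvature $\la$, and that its singular curves, if any, are of class $C^2$. By Theorem~\ref{th:structure} the singular set $\Sg_0$ consists of isolated points together with singular curves, so it suffices to discard the possibility that $\Sg_0$ contains no isolated point, that is, that $\Sg_0$ is either empty or a union of singular curves. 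Throughout we use that the image $\varphi_0(\Sg)$ is a compact subset of $\e$ (contained, when $\kappa>0$, in the given open hemisphere).

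First I would rule out singular curves. Let $\Ga\subset\Sg_0$ be a singular curve. By Theorem~\ref{th:singcurves} the curve $\Ga$ is a complete CC-geodesic of $\e$, and it lies inside the compact set $\varphi_0(\Sg)$. If $\kappa\leq 0$ this contradicts Proposition~\ref{prop:geomodel}(i), since a complete CC-geodesic must leave every compact set of $\e$ in finite time. If $\kappa>0$ and $\Sg$ lies in an open hemisphere, then $\Ga$ would be a complete CC-geodesic confined to that hemisphere, contradicting Proposition~\ref{prop:geomodel}(ii). Hence $\Sg_0$ contains no singular curves.

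Next I would discard the case $\Sg_0=\emptyset$. If $\Sg$ had no singular points, then the characteristic vector field $Z$ of \eqref{eq:nuh} would be a globally defined, nowhere vanishing $C^1$ vector field on the compact surface $\Sg$; its flow is therefore complete, so every characteristic curve is defined on all of $\rr$. By the ruling property in Proposition~\ref{prop:ruling} each characteristic curve is a CC-geodesic of curvature $\la$, hence a complete CC-geodesic of $\e$ contained in the compact set $\varphi_0(\Sg)$. This contradicts Proposition~\ref{prop:geomodel} exactly as before, both when $\kappa\leq 0$ and in the hemisphere case when $\kappa>0$. Thus $\Sg_0\neq\emptyset$, and since it contains no curves it must contain an isolated singular point.

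Finally, once $\Sg$ is known to possess an isolated singular point, Theorem~\ref{th:topology} shows that $\Sg$ is homeomorphic to a sphere, and Theorem~\ref{th:hopf} then yields $\Sg=\s_\la(p)$ for some $p\in\e$; alternatively one may invoke Theorem~\ref{th:isolated} directly, the compactness of $\Sg$ ruling out the planar alternative $\mathcal{L}_\la(p)$. The crux of the argument, and the only place where the simple connectedness of $\e$ is essential, is Proposition~\ref{prop:geomodel}: it is precisely the global behaviour of complete CC-geodesics in the model space that forbids the singular-curve and the empty-singular-set (torus) configurations, which explains why the theorem must fail in quotients $\e/G$ of non-trivial topology.
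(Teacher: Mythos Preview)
Your proof is correct and follows essentially the same approach as the paper's own proof: rule out both the empty singular set and the presence of singular curves by producing a complete CC-geodesic inside $\varphi_0(\Sg)$ and invoking Proposition~\ref{prop:geomodel}, then apply Theorems~\ref{th:topology} and~\ref{th:hopf}. The paper's version is more terse, but the logical structure and the key ingredients are identical.
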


\begin{proof}
Suppose that $\Sg$ has empty singular set or that it contains a singular curve. In both cases, Proposition~\ref{prop:ruling} or Theorem~\ref{th:singcurves} would imply that there is a complete CC-geodesic of $\e$ included in $\Sg$. By Proposition~\ref{prop:geomodel} we get a contradiction with the compactness of $\Sg$ or with the fact that $\Sg$ is contained inside an open hemisphere. Hence, we conclude that $\Sg$ must contain an isolated singular point. In particular, $\Sg$ is topologically a sphere by Theorem~\ref{th:topology}. Moreover, $\Sg$ has constant mean curvature since it is volume-preserving area-stationary. We finish by using Theorem~\ref{th:hopf}.
\end{proof}

\begin{remark}
\label{re:tekel}
In Theorem~\ref{th:alexandrov} the hypothesis that $\Sg$ is contained inside an open hemisphere of $\e$ when $\kappa>0$ cannot be removed. In fact, any Clifford torus given by the product of two circles with suitable radii is a volume-preserving area-stationary surface in $\e$, see \cite[Ex.~4.6]{hr1}. Other examples of embedded CMC tori with rotational symmetry in $\mathbb{M}(1)$ were found in \cite[Rem.~6.6]{hr1}.  Of course, none of these tori is inside an open hemisphere of $\e$. Note that Theorem~\ref{th:alexandrov} also fails in arbitrary $3$-dimensional space forms of Webster scalar curvature $\kappa\leq 0$. Let $M:=\e/G$ be the Sasakian cylinder defined in Example~\ref{ex:topology}. It is known, see \cite[Sect.~4.2]{rosales}, that any Euclidean vertical cylinder $\Sg$ about the vertical axis is a CMC surface in $\e$ with empty singular set. In $M$  the quotient surface $\Sg/G$ provides a volume-preserving area-stationary embedded torus.
\end{remark}

The results in Theorems~\ref{th:singcurves}, \ref{th:vpstationary} and \ref{th:structure} also allow to describe and classify the complete volume-preserving area-stationary surfaces with singular curves in any $3$-dimensional space form. By using the same ideas as in \cite[Thm.~6.11]{rr2} and \cite[Thm.~5.9]{hr1} we can prove the following result.

\begin{theorem}
\label{th:cmula}
Let $\Sg$ be a complete, connected and orientable $C^2$ surface immersed in a $3$-dimen\-sional space form. If $\Sg$ is volume-preserving area-stationary and contains at least one singular curve, then $\Sg$ coincides with one of the surfaces $\mathcal{C}_{\mu,\la}(\Ga)$ defined in Example~\ref{ex:cmula} below.
\end{theorem}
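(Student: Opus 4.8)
The plan is to reduce the global classification to the local analysis of a single singular curve, exactly as the vertical component of a CC-Jacobi field was exploited in Theorem~\ref{th:singcurves}. First I would invoke Theorem~\ref{th:vpstationary}: since $\Sg$ is volume-preserving area-stationary, it has constant mean curvature $\la$ for a suitable unit normal $N$, its singular curves are of class $C^2$, and the characteristic curves meet these singular curves orthogonally. Fixing one singular curve and applying Theorem~\ref{th:singcurves}, I may assume it is a complete CC-geodesic $\Ga$, parameterized by arc-length, of some curvature $\mu$; equivalently $\dot{\Ga}'=-2\mu\,J(\dot{\Ga})$.

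Next I would set up the flow $F(\eps,s):=\ga_\eps(s)$, where $\ga_\eps$ is the CC-geodesic of curvature $\la$ with $\ga_\eps(0)=\Ga(\eps)$ and $\dot{\ga}_\eps(0)=J(\dot{\Ga}(\eps))$. By Theorem~\ref{th:structure} (ii), Remarks~\ref{re:layH}, the orthogonality in Theorem~\ref{th:vpstationary}, and the completeness of $\Sg$, each $\ga_\eps$ is a characteristic curve of $\Sg$ that extends until it reaches a singular point. Writing $V_\eps:=(\ptl F/\ptl\eps)(\eps,\cdot)$ and $v_\eps:=\escpr{V_\eps,T}$, Lemma~\ref{lem:ccjacobi} gives the decomposition \eqref{eq:vtheta2}, and the computations in the proof of Theorem~\ref{th:singcurves} yield the initial conditions $v_\eps(0)=0$, $v_\eps'(0)=-2$ and $v_\eps''(0)=2h(\eps)=-4\mu$. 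The decisive point is that, because $\Ga$ has constant curvature $\mu$ and $M$ has constant Webster scalar curvature $\kappa$, Lemma~\ref{lem:jacobicon} forces $v_\eps$ to be independent of $\eps$; hence the behaviour along every characteristic curve issuing from $\Ga$ is identical.

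I would then read off the singular set of $\Sg$ from the zeros of the fixed function $v:=v_\eps$. Writing $v$ explicitly according to the sign of $\tau:=4(\la^2+\kappa)$ via Lemma~\ref{lem:jacobicon}, let $s_1>0$ be its first positive zero, if one exists. At $s_1$ there are two possibilities dictated by \eqref{eq:vtheta2}: either $V_\eps(s_1)\neq 0$, a nonzero horizontal vector orthogonal to $\dot{\ga}_\eps(s_1)$, so that $F(\cdot,s_1)$ traces a second singular curve, which Claim~$(*)$ from the proof of Theorem~\ref{th:singcurves} places at a constant value of $s$, parallel to $\Ga$; or $V_\eps(s_1)=0$ together with $v_\eps(s_1)=0$, in which case the characteristic curves converge to an isolated singular point. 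Iterating, the characteristic arcs run orthogonally between consecutive equally-spaced singular curves (possibly closing up), and this pattern is precisely what the model surface $\mathcal{C}_{\mu,\la}(\Ga)$ of Example~\ref{ex:cmula} encodes; the second alternative, producing a pole, is incompatible with the presence of a singular curve on a stationary surface and is ruled out by the genus restriction in Theorem~\ref{th:topology}.

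Finally I would close the argument by a connectedness and completeness sweep: the union of the image of $F$ with the singular curves it produces is a subset of $\Sg$ that is open, by the immersion property of $F$ (as in Lemma~\ref{lem:slaF}), and closed, by completeness of $\Sg$ and of the CC-geodesics, hence equals the connected surface $\Sg$, which is therefore $\mathcal{C}_{\mu,\la}(\Ga)$. The main obstacle I anticipate is not the local computation but this global bookkeeping: one must verify that the distinct characteristic arcs fit together across successive singular curves into a single immersed surface congruent to $\mathcal{C}_{\mu,\la}(\Ga)$, treating separately the subcases according to whether $v$ has no positive zero, finitely many, or the characteristic curves are periodic.
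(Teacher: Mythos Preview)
Your approach is essentially the one the paper indicates (it simply cites \cite[Thm.~6.11]{rr2} and \cite[Thm.~5.9]{hr1} and says the same ideas apply): invoke Theorem~\ref{th:vpstationary} for CMC, $C^2$ singular curves and orthogonality; invoke Theorem~\ref{th:singcurves} so that the fixed singular curve $\Ga$ is a complete CC-geodesic of some curvature $\mu$; set up the orthogonal CC-geodesic flow $F$, analyze the vertical component $v_\eps$ of the CC-Jacobi field via Lemma~\ref{lem:jacobicon}; and finish with an open--closed connectedness argument. The structure is correct.

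There is one inaccurate step. Your dichotomy at the first positive zero $s_1$ of $v_\eps$ allows for $V_\eps(s_1)=0$ (a ``pole''), which you then try to exclude via Theorem~\ref{th:topology}. That theorem requires compactness, which is not assumed here, so the appeal is not valid as stated. In fact the pole alternative never occurs and no topological argument is needed: with $v_\eps(0)=0$, $v_\eps'(0)=-2$, $v_\eps''(0)=-4\mu$, the explicit formulas of Lemma~\ref{lem:jacobicon} give, at any positive zero $s_1$ of $v_\eps$, the value $v_\eps'(s_1)=2$. This is exactly the computation carried out in the proof of Theorem~\ref{th:singcurves} for $\tau<0$ (``From the definition of $s(\eps)$ we get $v_\eps'(s(\eps))=2$''), and the same identity holds for $\tau=0$ and $\tau>0$ by the analogous elementary check. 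Hence $V_\eps(s_1)=(v_\eps'(s_1)/2)\,J(\dot{\ga}_\eps(s_1))=J(\dot{\ga}_\eps(s_1))\neq 0$, so the next singular locus is always a curve, parallel to $\Ga$ at distance $s_1$, and your iteration proceeds exactly as in Example~\ref{ex:cmula}. With this correction the proposal matches the paper's intended argument.
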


\begin{example}
\label{ex:cmula}
We follow the construction in \cite[Ex.~6.7]{rr2} and \cite[Ex.~5.8]{hr1}. Let $M$ be a $3$-dimensional space form and $\Ga$ a complete CC-geodesic of curvature $\mu$. At any point of $\Ga$ there are two unit horizontal vectors orthogonal to $\Ga$. Then, for any $\la\in\rr$, we can consider two families of CC-geodesic rays of curvature $\la\in\rr$ leaving orthogonally from $\Ga$. We extend these geodesics until they meet a new singular curve ``parallel" to $\Ga$. Now, we repeat the construction with the new singular curves and so on. The resulting set $\mathcal{C}_{\mu,\la}(\Ga)$ is a volume-preserving area-stationary surface of constant mean curvature $\la$ whose singular set consists of CC-geodesics of curvature $\mu$. 
\end{example}

Note that Theorems~\ref{th:isolated} and ~\ref{th:cmula} provide the classification of complete volume-preserving area-stationary $C^2$ surfaces with non-empty singular set in any $3$-dimensional space form. This generalizes previous results in \cite[Sect.~6]{rr2} for $\mm(0)$ and in \cite[Sect.~5]{hr1} for $\mm(1)$. Some characterizations of CMC surfaces with empty singular set in $\e$ can be found in \cite{ch}, \cite{bscv}, \cite{rr1}, \cite{rr2}, \cite{hr1} and \cite{rosales}.

\section{Stability properties of $\sla(p)$}
\label{sec:stability}

In this section we show that the spherical surfaces $\sla(p)$ introduced in \eqref{eq:sla} are second order minima of the sub-Riemannian area under certain deformations. We first state a second variation formula for $\sla(p)$ and discuss the technical problems arising in the computation.  

Let $\Sg$ be an oriented $C^2$ surface immersed inside a Sasakian sub-Riemannian $3$-manifold $M$. For any variation $\varphi:I\times\Sg\to M$ we know from \eqref{eq:duis} that the associated area functional is given by
\[
A(s)=\int_{\Sg-\Sg_0}f(s,p)\,d\Sg,
\]
where $f(s,p):=\mnh_p(s)\,|\text{Jac}\,\varphi_s|_p$. In Section~\ref{subsec:statsurf} we used differentiation under the integral sign, as stated in Lemma~\ref{lem:difint}, to get $A'(0)=\int_{\Sg-\Sg_0}(\ptl f/\ptl s)(0,p)\,d\Sg$. This was possible since, for fixed $p\in\Sg-\Sg_0$, the map $s\mapsto f(s,p)$ is absolutely continuous, the derivative $(\ptl f/\ptl s)(0,p)$ exists, and $(\ptl f/\ptl s)(s,p)$ is uniformly bounded as a function of $s$ and $p$ near $s=0$. If we want to compute $A''(0)$ from Lemma~\ref{lem:difint} we need some conditions that the derivatives $(\ptl f/\ptl s)(s,p)$ and $(\ptl^2 f/\ptl s^2)(s,p)$ must satisfy. This leads us to the notion of admissible variation that we now introduce.

\begin{definition}
\label{def:admissible}
Let $\varphi:I\times\Sg\to M$ be a $C^1$ variation of $\Sg$. Recall that there is a compact set $C\subeq\Sg$ such that $\varphi_s(p)=\varphi_0(p)$ for any $s\in I$ and any $p\in\Sg-C$. We say that $\varphi$ is \emph{admissible} if the function $f:I\times\Sg\to\rr$ defined by $f(s,p):=\mnh_p(s)\,|\text{Jac}\,\varphi_s|_p$ satisfies these properties:
\begin{itemize}
\item[(i)] $(\ptl f/\ptl s)(s,p)$ exists for any $s\in I$ and any $p\in\Sg-\Sg_0$,
\item[(ii)] the map $s\mapsto (\ptl f/\ptl s)(s,p)$ is absolutely continuous for almost every $p\in\Sg-\Sg_0$,
\item[(iii)] there is a function $h:\Sg\to\rr$, which is integrable on $C$ with respect to $d\Sg$, and such that $|(\ptl^2 f/\ptl s^2)(s,p)|\leq h(p)$ for almost every $p\in\Sg-\Sg_0$ and almost every $s\in I$,
\item[(iv)] the derivative $(\ptl^2 f/\ptl s^2)(0,p)$ exists for almost every $p\in\Sg-\Sg_0$.
\end{itemize} 
\end{definition}

Clearly we can replace $\Sg-\Sg_0$ with $C\cap(\Sg-\Sg_0)$ in the previous definition. Note that condition (i) holds provided the variation \emph{preserves the regular set} of $\Sg$, i.e., $\varphi_s(\Sg-\Sg_0)\subseteq\Sg_s-(\Sg_s)_0$ for any $s\in I$. In particular it is easy to check that, if $\Sg-\Sg_0$ is a $C^3$ surface and $\varphi$ is a $C^3$ variation compactly supported on $\Sg-\Sg_0$ (which implies that $\varphi$ leaves $\Sg_0$ invariant) then there is an interval $I'\sub\sub I$ such that the restriction of $\varphi$ to $I'\times\Sg$ is admissible. In Appendix~B we will show several examples of admissible variations for which $\Sg_0$ is not fixed.

For a spherical surface $\sla(p)$ the family of admissible variations moving the poles is very large. In order to analyze if $\sla(p)$ is a second order minimum of the area we must obtain an explicit expression of $A''(0)$ for all these variations. In \cite[Thm.~5.2]{rosales} this was done for variations compactly supported off of the poles and having the form $\varphi_s(p):=\exp_p(\omega(s,p)\,N_p)$. In the next result we extend the second variation formula to the case of arbitrary admissible variations. 

\begin{theorem}[Second variation formula for $\sla(p)$] 
\label{th:2ndsla} 
Let $M$ be a $3$-dimensional space form of Webster scalar curvature $\kappa$. Consider a spherical surface $\sla(p)$ for some $p\in M$ and $\la\geq 0$ with $\la^2+\kappa>0$. Let $\varphi:I\times\sla(p)\to M$ be an admissible variation of class $C^3$ off of the poles. Denote $u:=\escpr{U,N}$, where $U$ is the velocity vector field and $N$ is the unit normal in Lemma~\ref{lem:slaF} (ii). Then, the functional $A+2\la V$ is twice differentiable at $s=0$, and we have
\begin{equation}
\label{eq:2ndsla}
(A+2\la V)''(0)=\mathcal{I}(u,u),
\end{equation}
where $\mathcal{I}$ is the quadratic form on $C^1(\sla(p))$ defined by
\begin{equation}
\label{eq:indexform}
\mathcal{I}(v,w):=\int_{\sla(p)}|N_{h}|^{-1}\left\{Z(v)\,Z(w)-\big(1+(\tau^2-1)\,\mnh^2\big)^2\,v w\right\}d\sla(p).
\end{equation}
In the previous formula $Z$ is the characteristic field introduced in \eqref{eq:nuh} and $\tau:=\sqrt{\la^2+\kappa}$.
\end{theorem}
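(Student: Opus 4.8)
The plan is to obtain \eqref{eq:2ndsla} from the general second variation formula in Theorem~\ref{2ndgeneral}, and then to check that every divergence term occurring there vanishes for the particular surface $\sla(p)$. First I would settle the twice-differentiability of $A+2\la V$ at $s=0$. Since $\sla(p)$ is a $C^2$ surface of constant mean curvature $\la$ by Example~\ref{ex:slacmc}, and $\var$ is admissible, conditions (i)--(iv) of Definition~\ref{def:admissible} let me apply Lemma~\ref{lem:difint} twice to $A(s)=\int_{\sla(p)-(\sla(p))_0}f(s,\cdot)\,d\sla(p)$, so that $A''(0)=\int_{\sla(p)-(\sla(p))_0}(\ptl^2 f/\ptl s^2)(0,\cdot)\,d\sla(p)$. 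As $V(s)$ is smooth in $s$ by \eqref{eq:volume}, the functional $A+2\la V$ is then twice differentiable at the origin.

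Next I would invoke Theorem~\ref{2ndgeneral} for the CMC surface $\sla(p)$ and the admissible variation $\var$. This writes $(A+2\la V)''(0)$ as the integral over the regular set of a bulk integrand --- which, after substituting $u=\escpr{U,N}$, matches the integrand of $\mathcal{I}(u,u)$ up to the geometric quantities that remain to be simplified --- together with a finite collection of terms of the form $\int_{\sla(p)-(\sla(p))_0}\divv_{\sla(p)}(X)\,d\sla(p)$, where each $X$ is a tangent vector field on $\sla(p)-\{p,p_\la\}$ assembled from $u$, $Z(u)$ and the curvature and shape-operator data of the surface.

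The core of the argument is to show that each such divergence term is zero. Because $p$ and $p_\la$ are singular points, where $\mnh\to 0$, I cannot integrate by parts directly on $\sla(p)$. Instead, for small $r>0$ I would remove the two polar caps $\{0\leq s<r\}$ and $\{\pi/\tau-r<s\leq\pi/\tau\}$ in the coordinates of Lemma~\ref{lem:slaF}, apply the Riemannian divergence theorem on the remaining compact annulus, and reduce $\int\divv_{\sla(p)}X$ to boundary fluxes of $X$ along the circles $\{s=r\}$ and $\{s=\pi/\tau-r\}$. Using the explicit expressions of Lemma~\ref{lem:slaF} for $d\sla(p)$, $\mnh$, $\escpr{N,T}$ and the components of the shape operator, I would estimate these fluxes and show that they tend to $0$ as $r\to 0$. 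This is the step that demands the most care: the fields $X$ are singular at the poles, so the vanishing cannot be read off from crude size estimates and instead relies on the precise asymptotics near $s=0$ (where $\mnh\sim s$ and the length of $\{s=r\}$ is of order $r$), on the $C^1$ boundedness of $u$ there, on the structure of the conormal directions to the circles $\{s=r\}$, and on the integrability of $\mnh^{-1}$ granted by Corollary~\ref{cor:integrability}.

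Finally I would simplify the surviving bulk integrand into the stated form. Substituting $\escpr{B(Z),Z}=2\la\,\mnh$, $\escpr{B(Z),S}=(1-\tau^2)\,\mnh^2$ and $\escpr{B(S),S}=\la\,\tau^2\,\mnh/(1+(\tau^2-1)\cos^2(\tau s))$ from Lemma~\ref{lem:slaF}, together with the value of $\escpr{N,T}$ and the relation $\kappa=\tau^2-\la^2$, into the potential of Theorem~\ref{2ndgeneral}, the coefficient of $u^2$ should collapse to $(1+(\tau^2-1)\,\mnh^2)^2$ while the gradient term reduces to $\mnh^{-1}Z(u)^2$. Combined with the vanishing of the divergence terms, this yields $(A+2\la V)''(0)=\mathcal{I}(u,u)$ with $\mathcal{I}$ as in \eqref{eq:indexform}. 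I expect the flux estimates at the poles to be the genuine obstacle, the algebraic collapse of the potential being a routine if lengthy computation.
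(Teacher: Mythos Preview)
Your proposal is correct and follows essentially the same route as the paper: apply Theorem~\ref{2ndgeneral}, use \eqref{eq:potential} to collapse the potential, and kill the divergence terms by excising polar caps and showing the boundary fluxes vanish via Lemma~\ref{lem:slaF} and Corollary~\ref{cor:integrability}. The paper sharpens your outline only in that it separates the divergence terms into those with bounded $X$ (handled by a direct application of the generalized divergence theorem) and the single genuinely unbounded piece $2\,\mnh^{-1}\escpr{Q,S}\,u\,Z$ coming from $h_1$, whose flux vanishes not by size but by the angular cancellation $\int_0^{2\pi}(\escpr{Q_p,e_1}\sin\theta-\escpr{Q_p,e_2}\cos\theta)\,d\theta=0$.
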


That the expression in \eqref{eq:2ndsla} coincides with the one obtained in \cite[Thm.~5.2]{rosales} is a consequence of Lemma~\ref{lem:slaF} (iii), from which it is easy to check that
\begin{equation}
\label{eq:potential}
|B(Z)+S|^2+4\,(\kappa-1)\,\mnh^2=\big(1+(\tau^2-1)\,\mnh^2\big)^2
\end{equation}
along $\sla(p)$. Indeed, formula \eqref{eq:2ndsla} also holds for some particular admissible variations with regularity less than $C^3$ off of the poles, see Remarks~\ref{re:seno} for precise references. Note that the integrability of $\mnh^{-1}$ stated in Corollary~\ref{cor:integrability} implies that $\mathcal{I}(v,w)$ is finite for any $v,w\in C^1(\sla(p))$. We refer to $\mathcal{I}$ as the \emph{index form} associated to $\sla(p)$ by analogy with the Riemannian situation studied in \cite{bdce}. 

The proof of Theorem~\ref{th:2ndsla} is deduced from the more general second variation formula in Theorem~\ref{2ndgeneral} for admissible deformations possibly moving the singular set. After differentiation under the integral sign and a long computation, we get that $(A+2\la V)''(0)$ equals $\mathcal{I}(u,u)$ adding the divergence of some tangent vector fields defined off of the poles. Then we use a generalized divergence theorem together with the integrability of $\mnh^{-1}$ to show that these extra terms vanish, thus proving \eqref{eq:2ndsla}. All the details are found in Appendix~A. 

The next ingredient that we need is the expression of the index form with respect to the polar coordinates introduced in Lemma~\ref{lem:slaF}. 

\begin{lemma}
\label{lem:indexpolar}
Let $M$ be a $3$-dimensional space form of Webster scalar curvature $\kappa$. Consider a spherical surface $\sla(p)$ for some $p\in M$ and $\la\geq 0$ with $\la^2+\kappa>0$. For any function $u\in C^1(\sla(p))$ we denote $\overline{u}:=u\circ F$, where $F(\theta,s):=\ga_\theta(s)$ is the flow of CC-geodesics of curvature $\la$ leaving from $p$. Then, the index form of $\sla(p)$ defined in \eqref{eq:indexform} satisfies
\[
\mathcal{I}(u,u)=\frac{1}{\tau}\int_{[0,2\pi]\times [0,\pi]}\left\{\Big(\frac{\ptl\xi}{\ptl x}\Big)^2-\xi^2\right\}d\theta\,dx,
\]
where $\tau:=\sqrt{\la^2+\kappa}$ and $\xi(\theta,x):=\sqrt{1+(\tau^2-1)\cos^2(x)}\,\,\overline{u}(\theta,x/\tau)$.
\end{lemma}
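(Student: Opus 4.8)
The plan is to pull the surface integral \eqref{eq:indexform} back to the rectangle $C_\la=[0,2\pi]\times(0,\pi/\tau)$ via the flow $F(\theta,s)=\ga_\theta(s)$ and to exploit the explicit formulas in Lemma~\ref{lem:slaF}. First I would compute the two basic ingredients in the coordinates $(\theta,s)\in C_\la$. Since the characteristic curves of $\sla(p)$ are the arc-length parameterized geodesics $\ga_\theta(s)$ with $Z=\dot\ga_\theta$ by Lemma~\ref{lem:slaF}(ii), for $u\in C^1(\sla(p))$ we have $Z(u)=\ptl\overline u/\ptl s$. Writing $G(s):=1+(\tau^2-1)\cos^2(\tau s)$, the area element and the expression for $\mnh$ in Lemma~\ref{lem:slaF}(iii) produce the clean cancellation $\mnh^{-1}\,d\sla(p)=\tau^{-2}G(s)\,d\theta\,ds$, while the one-line trigonometric identity $G(s)+(\tau^2-1)\sin^2(\tau s)=\tau^2$ yields $1+(\tau^2-1)\,\mnh^2=\tau^2/G(s)$. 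Substituting these into \eqref{eq:indexform} turns $\mathcal I(u,u)$ into an explicit double integral over $C_\la$ of the integrand $\tau^{-2}G(s)\,(\ptl\overline u/\ptl s)^2-\tau^2 G(s)^{-1}\overline u^2$.

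Next I would perform the change of variable $x=\tau s$, so that $s\in(0,\pi/\tau)$ corresponds to $x\in(0,\pi)$, and set $w(\theta,x):=\overline u(\theta,x/\tau)$ and $G(x):=1+(\tau^2-1)\cos^2 x$. Since $\ptl\overline u/\ptl s=\tau\,\ptl w/\ptl x$ and $ds=dx/\tau$, this reduces the index form to $\tau^{-1}\int_{[0,2\pi]\times[0,\pi]}\big(G\,(\ptl w/\ptl x)^2-\tau^2 G^{-1}w^2\big)\,d\theta\,dx$. It then remains to recognize this as the integral of $(\ptl\xi/\ptl x)^2-\xi^2$ for $\xi=\sqrt G\,w$. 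Expanding $(\ptl\xi/\ptl x)^2-\xi^2$ and subtracting the previous integrand, the difference equals $\ptl_x(\tfrac12 G'w^2)$ plus the algebraic expression $\big(-\tfrac12 G''+(G')^2/(4G)-G+\tau^2/G\big)w^2$; I would check that this last coefficient vanishes identically, which after clearing $4G$ amounts to the polynomial identity $-2G''G+(G')^2-4G^2+4\tau^2=0$, readily verified by writing $G$ in terms of $\cos 2x$.

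Finally, integrating the remaining divergence term $\ptl_x(\tfrac12 G'w^2)$ over $x\in[0,\pi]$ produces the boundary contribution $\tfrac12\,[G'w^2]_{0}^{\pi}$, which vanishes because $G'(x)=-(\tau^2-1)\sin(2x)$ satisfies $G'(0)=G'(\pi)=0$; crucially, no vanishing of $w$ at the poles is needed, only the continuity of $u\in C^1(\sla(p))$ guaranteeing that $w(\theta,0)=u(p)$ and $w(\theta,\pi)=u(p_\la)$ are finite. This gives the claimed identity. The main point to watch is the legitimacy of these manipulations near the poles $s=0,\pi/\tau$: one must observe that $G$ and $G^{-1}$ are bounded between positive constants on $[0,\pi]$ and that $w$ together with $\ptl w/\ptl x$ is bounded, so the integrands are genuinely integrable and the integration by parts in $x$ is justified, the finiteness of $\mathcal I(u,u)$ itself being guaranteed by Corollary~\ref{cor:integrability}.
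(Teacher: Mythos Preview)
Your argument is correct and follows essentially the same strategy as the paper: express $\mathcal{I}(u,u)$ in polar coordinates via Lemma~\ref{lem:slaF}, introduce the weight $\sqrt{G}$ to absorb the coefficient, and use integration by parts on the resulting cross term. The only cosmetic differences are that the paper performs the substitution $\omega=\sqrt{G(\tau s)}\,\overline u$ before the change of variable $x=\tau s$ (you reverse this order), and that the paper carries out the algebraic simplification by hand in two displayed equations rather than packaging it as the single polynomial identity $-2G''G+(G')^2-4G^2+4\tau^2=0$; your formulation is arguably tidier but amounts to the same computation.
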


\begin{proof}
Let $N$ be the unit normal along $\sla(p)$ for which any CC-geodesic $\ga_\theta(s)$ with $s\in (0,\pi/\tau)$ is a characteristic curve of $\sla(p)$. From the definition of $\mathcal{I}(u,u)$ and the expressions for $\mnh$ and $d\sla(p)$ in Lemma~\ref{lem:slaF} (iii), we obtain
\begin{equation}
\label{2ndslacoord} 
\mathcal{I}(u,u)=\int_{[0,2\pi]\times [0,\pi/\tau]}\Big\{\frac{1+(\tau^2-1)\cos^2(\tau s)}{\tau^2}\,\Big(\frac{\ptl\overline{u}}{\ptl s}\Big)^2-\frac{\tau^2}{1+(\tau^2-1)\cos^2(\tau s)}\,\overline{u}^2\Big\}\,d\theta\,ds.
\end{equation}
We define the $C^1$ function $\omega(\theta,s):=\sqrt{1+(\tau^2-1)\cos^2(\tau s)}\,\,\overline{u}(\theta,s)$. An easy computation gives us
\begin{align}
\label{deru2}
\frac{1+(\tau^2-1)\,\cos^2(\tau s)}{\tau^2}\,\Big(\frac{\ptl\overline{u}}{\ptl s}\Big)^2&=\frac{1}{\tau^2}\,\Big(\frac{\ptl\omega}{\ptl s}\Big)^2-\frac{(\tau^2-1)^2\,\cos^2(\tau s)\,\sin^2(\tau s)}{1+(\tau^2-1)\cos^2(\tau s)}\,\overline{u}^2
\\
\nonumber 
&+\frac{2\,(\tau^2-1)\,\cos(\tau s)\,\sin(\tau s)}{\tau}\,\frac{\ptl\overline{u}}{\ptl s}\,\overline{u}.
\end{align}
On the other hand, for fixed $\theta\in[0,2\pi]$, we can apply integration by parts to deduce
\begin{equation}
\label{intparts}
\int_0^{\pi/\tau}2\,\cos(\tau s)\,\sin(\tau s)\,\,\frac{\ptl\overline{u}}{\ptl s}\,\,\overline{u}\,ds=-\tau\int_0^{\pi/\tau}\big\{\!\cos^2(\tau s)-\sin^2(\tau s)\big\}\,\overline{u}^2\,ds.
\end{equation}
Now, we use Fubini's theorem and substitute equations \eqref{deru2} and \eqref{intparts} into \eqref{2ndslacoord}. After simplifying, we conclude that
\begin{equation*}
\mathcal{I}(u,u)=\int_{[0,2\pi]\times [0,\pi/\tau]}
\Big\{\frac{1}{\tau^2}\,\Big(\frac{\ptl\omega}{\ptl
s}\Big)^2-\omega^2\Big\}\,d\theta\,ds.
\end{equation*}
The claim follows from the change of variables $x:=\tau s$ and the fact that $\xi(\theta,x)=\omega(\theta,x/\tau)$.
\end{proof}

\begin{remark}
The previous lemma has an interesting consequence: the expression in polar coordinates of $\tau\,\mathcal{I}$ is a quadratic form not depending on the sphere $\sla(p)$ nor the ambient manifold $M$. This fact might suggest that all the spheres $\sla(p)$ will share the same behaviour with respect to stability conditions. We will illustrate this property rigorously in the main results of this section.  
\end{remark}

Recall that any sphere $\sla(p)$ is a volume-preserving area-stationary surface, see Example~\ref{ex:slacmc}. As we saw in Section~\ref{subsec:statsurf} this implies that $(A+2\la V)'(0)=0$ for any variation. Indeed, in the next statement we prove that $\sla(p)$ is a second order minimum of $A+2\la V$ for certain variations which may preserve volume or not.

\begin{proposition}
\label{prop:strictstability}
Let $M$ be a $3$-dimensional space form of Webster scalar curvature $\kappa$. Consider a spherical surface $\sla(p)$ for some $p\in M$ and $\la\geq 0$ with $\la^2+\kappa>0$. Then, the index form of $\sla(p)$ satisfies $\mathcal{I}(u,u)\geq 0$ for any $u\in C^1(\sla(p))$ vanishing at the poles or along the equator. As a consequence, $\sla(p)$ is strongly stable, in the sense that $(A+2\la V)''(0)\geq 0$ for any admissible variation which is $C^3$ off of the poles and fixes either the poles or the equator.
\end{proposition}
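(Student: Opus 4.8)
The plan is to reduce the inequality $\mathcal{I}(u,u)\geq 0$ to a one-dimensional Wirtinger-type inequality by means of the polar expression in Lemma~\ref{lem:indexpolar}. Writing $\xi(\theta,x):=\sqrt{1+(\tau^2-1)\cos^2(x)}\,\,\overline{u}(\theta,x/\tau)$, that lemma gives
\[
\mathcal{I}(u,u)=\frac{1}{\tau}\int_0^{2\pi}\bigg(\int_0^{\pi}\Big\{(\ptl\xi/\ptl x)^2-\xi^2\Big\}\,dx\bigg)\,d\theta,
\]
an iterated integral in which no $\theta$-derivative of $\xi$ appears. Hence it suffices to show that the inner integral is nonnegative for each fixed $\theta$. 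First I would record how the hypotheses on $u$ pass to $\xi$: since $\xi(\theta,x)$ is a multiple of $\overline{u}(\theta,x/\tau)$ and since $F(\theta,0)=p$, $F(\theta,\pi/\tau)=p_\la$ and $F(\theta,\pi/(2\tau))$ lies on the equator, the vanishing of $u$ at the poles forces $\xi(\theta,0)=\xi(\theta,\pi)=0$, while the vanishing of $u$ along the equator forces $\xi(\theta,\pi/2)=0$.

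The second step is the Wirtinger inequality itself, namely $\int_0^\pi(\ptl\xi/\ptl x)^2\,dx\geq\int_0^\pi\xi^2\,dx$ under either boundary condition. In the first case ($\xi(\theta,0)=\xi(\theta,\pi)=0$) I would expand $\xi(\theta,\cdot)$ in the Dirichlet eigenbasis $\{\sin(nx)\}_{n\geq 1}$ and apply Parseval to $\xi$ and to its derivative; as every eigenvalue $n^2$ satisfies $n^2\geq 1$, the difference equals a sum of nonnegative terms. In the second case ($\xi(\theta,\pi/2)=0$) I would split $[0,\pi]$ at $x=\pi/2$ and treat each half as a mixed Neumann--Dirichlet problem (free where no condition is imposed, Dirichlet at $\pi/2$), whose eigenfunctions are $\cos((2k-1)x)$ with eigenvalues $(2k-1)^2\geq 1$. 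The key point, and the step I expect to require the most care, is that in both problems the \emph{lowest} eigenvalue equals exactly $1$, which is precisely the coefficient of the $-\xi^2$ term; this exact matching is what makes $\sla(p)$ marginally stable, and it must be verified for the mixed boundary problem as well as for the Dirichlet one.

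Finally, the strong stability assertion follows by combining this with the second variation formula. By Theorem~\ref{th:2ndsla} one has $(A+2\la V)''(0)=\mathcal{I}(u,u)$ with $u=\escpr{U,N}$ for any admissible variation that is $C^3$ off of the poles. If the variation fixes the poles then its velocity field $U$ vanishes there, so $u$ vanishes at the poles; likewise, if it fixes the equator then $u$ vanishes along the equator. In either case the previous steps yield $\mathcal{I}(u,u)\geq 0$, and therefore $(A+2\la V)''(0)\geq 0$, as claimed.
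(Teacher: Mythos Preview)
Your proposal is correct. The reduction via Lemma~\ref{lem:indexpolar} and the treatment of the first case (vanishing at the poles) match the paper exactly: both invoke the Dirichlet Wirtinger inequality on $[0,\pi]$.

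For the second case (vanishing along the equator) your route differs from the paper's. The paper factors $\xi_\theta(x)=\cos(x)\,\overline{v}_\theta(x)$ and, after an integration by parts on each of $(0,\pi/2)$ and $(\pi/2,\pi)$, rewrites the inner integral as $\int_0^\pi \overline{v}_\theta'(x)^2\cos^2(x)\,dx$, which is manifestly nonnegative. You instead split at $\pi/2$ and appeal to the mixed Neumann--Dirichlet spectrum on each half, observing that the natural boundary condition at the free endpoint makes $\cos((2k-1)x)$ the eigenbasis and that the bottom eigenvalue is exactly $1$. Both arguments are valid and are really two faces of the same fact: $\cos(x)$ is the ground state with eigenvalue $1$, and the paper's factorization is precisely the ground-state substitution that linearizes the quadratic form around it. Your spectral argument is cleaner conceptually but requires the reader to accept (or verify) completeness of the mixed eigenbasis and the Parseval step for $\xi'$ via integration by parts; the paper's computation is more hands-on but entirely self-contained.
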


\begin{proof}
Let $u\in C^1(\sla(p))$. From Lemma~\ref{lem:indexpolar} and Fubini's theorem, we have
\[
\mathcal{I}(u,u)=\frac{1}{\tau}\int_0^{2\pi}\left[\int_0^\pi\left\{\xi_\theta'(x)^2-\xi_\theta(x)^2\right\}dx\right]d\theta,
\]
where $\tau:=\sqrt{\la^2+\kappa}$ and $\xi_\theta(x):=\sqrt{1+(\tau^2-1)\cos^2(x)}\,\,\overline{u}(\theta,x/\tau)$. Here the primes $'$ denote the derivatives with respect to $x$. If $u$ vanishes at the poles, then $\xi_\theta(0)=\xi_\theta(\pi)=0$, and we can apply Wirtinger's inequality for functions vanishing at the boundary (see for instance \cite[p.~143]{mitrinovic}) to get $\int_0^\pi\xi_\theta'(x)^2\,dx\geq\int_0^\pi\xi_\theta(x)^2\,dx$ for any $\theta\in [0,2\pi]$. Suppose that $u$ vanishes along the equator. Then $\overline{u}(\theta,\pi/(2\tau))=0$ for any $\theta\in[0,2\pi]$, and we can write $\xi_\theta(x)=\cos(x)\,\overline{v}_\theta(x)$ for some continuous function $\overline{v}_\theta:[0,\pi]\to\rr$. A straightforward computation gives us
\[
\xi_\theta'(x)^2-\xi_\theta(x)^2=-\cos(2x)\,\overline{v}_\theta(x)^2+\overline{v}_\theta'(x)^2\cos^2(x)-(1/2)\,\sin(2x)\,(\overline{v}_\theta(x)^2)',
\]
for any $x\neq \pi/2$. Then, integration by parts shows that the integrals on $(0,\pi/2)$ and $(\pi/2,\pi)$ of the functions $-(1/2)\sin(2x)\,(\overline{v}_\theta(x)^2)'$ and $\cos(2x)\,\overline{v}_\theta(x)^2$ coincide. Therefore
\[
\mathcal{I}(u,u)=\frac{1}{\tau}\int_0^{2\pi}\left[\int_0^\pi \overline{v}_\theta'(x)^2\cos^2(x)\,dx\right],
\]
which is clearly nonnegative. The last statement follows from Theorem~\ref{th:2ndsla}.
\end{proof}

The next result is an immediate consequence of the previous proposition.

\begin{corollary}
\label{cor:yeso}
A sphere $\sla(p)$ minus the poles is strongly stable, in the sense that $(A+2\la V)''(0)$ is nonnegative for any $C^3$ variation. A sphere $\sla(p)$ removing the equator is strongly stable, in the sense that $(A+2\la V)''(0)\geq 0$ for any admissible variation which is $C^3$ off of the poles. In particular, the open hemispheres $\sla(p)^-$ and $\sla(p)^+$ are strongly stable.
\end{corollary}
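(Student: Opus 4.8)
The plan is to obtain all three assertions directly from the second variation formula in Theorem~\ref{th:2ndsla} together with the sign of the index form established in Proposition~\ref{prop:strictstability}; the only real work is to check, in each case, that the deformation forces the normal component $u:=\escpr{U,N}$ of the velocity field to vanish on the appropriate locus, so that one of the two alternatives of Proposition~\ref{prop:strictstability} applies.

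First I would treat the sphere $\sla(p)$ minus the poles. By the very definition of a variation there is a compact set outside of which the deformation is stationary, and since this set is contained in $\sla(p)$ minus the poles it stays away from them; hence the variation fixes a neighborhood of each pole, so $u$ vanishes there. Moreover $\sla(p)$ minus the poles is a $C^\infty$ surface with empty singular set, so by the remark following Definition~\ref{def:admissible} the restriction of any $C^3$ variation to a suitable subinterval is admissible and $C^3$ off of the poles. Thus Theorem~\ref{th:2ndsla} gives $(A+2\la V)''(0)=\mathcal{I}(u,u)$, and the first alternative of Proposition~\ref{prop:strictstability} yields $\mathcal{I}(u,u)\geq 0$.

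Next I would treat the sphere $\sla(p)$ with the equator removed. Here admissibility and $C^3$ regularity off of the poles are imposed as hypotheses, precisely because such a deformation may now move the poles, so that admissibility is no longer automatic from the remark. The compact support of the variation lies away from the equator, so $u$ vanishes along the equator; Theorem~\ref{th:2ndsla} again gives $(A+2\la V)''(0)=\mathcal{I}(u,u)$, and the second alternative of Proposition~\ref{prop:strictstability} yields $\mathcal{I}(u,u)\geq 0$. Finally, the open hemispheres $\sla(p)^-$ and $\sla(p)^+$ are the two connected components of $\sla(p)$ minus the equator, so any admissible variation of one of them which is $C^3$ off of the pole extends, by the identity on the other hemisphere, to a variation of $\sla(p)$ minus the equator of the type just considered, whence the claim follows at once.

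There is no genuine analytic difficulty here, as the key inequality is already contained in Proposition~\ref{prop:strictstability}; the only point demanding care is the bookkeeping that distinguishes where $u$ is forced to vanish (the poles in the first case, the equator in the other two), together with the observation that admissibility comes for free exactly when the support of the variation avoids the singular set.
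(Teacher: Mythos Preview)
Your proposal is correct and follows the same approach as the paper, which simply declares the corollary an immediate consequence of Proposition~\ref{prop:strictstability}. You have accurately supplied the details the authors omit: identifying where the compact support forces $u$ to vanish (poles versus equator), invoking the admissibility remark after Definition~\ref{def:admissible} in the first case, and then appealing to Theorem~\ref{th:2ndsla} and the appropriate alternative of Proposition~\ref{prop:strictstability}.
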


\begin{remark}
Previous results related to Proposition~\ref{prop:strictstability} were obtained for the Heisenberg group $\mm(0)$. In this particular case, it is well-known that a spherical surface $\sla(o)$ coincides with the union of two radial graphs defined over a horizontal disk $D_\la$ and meeting along the equator. On the one hand, Montefalcone~\cite[Cor.~4.11 and App.~A]{montefalcone-spheres} showed strong stability of $\sla(o)$ for certain variations with velocity vector field $U=(\mnh\,u)\,N$ or $U=u\,T$, where $u$ is a $C^1$ function with compact support inside an open hemisphere. On the other hand, the strong stability of $\sla(o)$ for admissible variations fixing the equator can be deduced from \cite[Eq.~(3.7)]{ritore-calibrations}, where Ritor\'e proved that $\sla(o)$ is a global minimizer of the functional $A+2\la V$ among finite perimeter sets $\Om$ of $\mm(0)$ such that $\Om\sub D_\la\times\rr$ and $D_\la\times\{0\}\sub\Om$.
\end{remark}

\begin{remark}
\label{re:chispa}
In spite of the previous results, the whole sphere $\sla(p)$ is not strongly stable for arbitrary admissible variations. To see this, it is enough to consider the deformation by Riemannian parallel surfaces $\varphi_s(p):=\exp_p(sN_p)$. This is an admissible variation by Lemma~\ref{lem:parallels} with velocity vector $U=N$. So, by taking into account Theorem~\ref{th:2ndsla}, we get
\[
(A+2\la V)''(0)=\mathcal{I}(1,1)=-\int_{\sla(p)}\mnh^{-1}\big(1+(\tau^2-1)\,\mnh^2\big)^2\,d\sla(p)<0.
\] 
\end{remark}

Clearly the variation considered in the previous example does not preserve volume. Indeed, our main result in this section establishes the following.

\begin{theorem}
\label{th:stability} 
Let $M$ be a $3$-dimensional space form of Webster scalar curvature $\kappa$. Consider a spherical surface $\sla(p)$ for some $p\in M$ and $\la\geq 0$ with $\la^2+\kappa>0$. Then, the index form of $\sla(p)$ satisfies $\mathcal{I}(u,u)\geq 0$, for any $u\in C^1(\sla(p))$ with $\int_{\sla(p)}u\,d\sla(p)=0$. As a consequence, $\sla(p)$ is stable under a volume constraint, in the sense that $A''(0)\geq 0$ for any volume-preserving admissible variation which is  $C^3$ off of the poles.
\end{theorem}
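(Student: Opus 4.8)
The plan is to prove the first assertion---that $\mathcal{I}(u,u)\geq 0$ whenever $\int_{\sla(p)}u\,d\sla(p)=0$---and then deduce the stability statement from it. For the deduction, observe that if $\varphi$ is a volume-preserving admissible variation, then $V(s)$ is constant near $s=0$, so $V'(0)=V''(0)=0$; in particular $(A+2\la V)''(0)=A''(0)$. Moreover \eqref{eq:vprima} gives $V'(0)=\int_{\sla(p)}u\,d\sla(p)=0$, so $u$ satisfies the hypothesis of the first assertion. Since $\varphi$ is $C^3$ off the poles, Theorem~\ref{th:2ndsla} yields $A''(0)=\mathcal{I}(u,u)$, which is nonnegative by that assertion. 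Thus everything reduces to the inequality for the index form.

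To establish it I would pass to the polar coordinates of Lemma~\ref{lem:slaF} and use the expression of Lemma~\ref{lem:indexpolar}, writing $\xi(\theta,x):=\sqrt{1+(\tau^2-1)\cos^2 x}\,\,\overline{u}(\theta,x/\tau)$ so that
\[
\mathcal{I}(u,u)=\frac{1}{\tau}\int_{[0,2\pi]\times[0,\pi]}\left\{\Big(\frac{\ptl\xi}{\ptl x}\Big)^2-\xi^2\right\}d\theta\,dx.
\]
Two facts about $\xi$ will be decisive. First, the change of variables $x=\tau s$ together with the area element in Lemma~\ref{lem:slaF}~(iii) shows that the constraint $\int_{\sla(p)}u\,d\sla(p)=0$ is equivalent to $\int_0^\pi A_0(x)\,\sin x\,dx=0$, where $A_0(x):=(2\pi)^{-1}\int_0^{2\pi}\xi(\theta,x)\,d\theta$ is the $\theta$-average of $\xi$. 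Second, since $u\in C^1(\sla(p))$ is continuous at the poles, $\overline{u}(\theta,0)=u(p)$ and $\overline{u}(\theta,\pi/\tau)=u(p_\la)$ do not depend on $\theta$; as $\cos^2 0=\cos^2\pi=1$, this forces $\xi(\theta,0)=\tau\,u(p)$ and $\xi(\theta,\pi)=\tau\,u(p_\la)$ to be independent of $\theta$ as well.

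The heart of the argument is a double Fourier analysis of $\xi$. Expanding $\xi$ in its Fourier series in $\theta$ and using orthogonality, $\mathcal{I}(u,u)$ splits as a sum over the $\theta$-frequencies $k\geq 0$ of one-dimensional integrals $\int_0^\pi(\phi'^2-\phi^2)\,dx$ of the corresponding coefficient functions $\phi$. For each $k\geq 1$ the coefficient function vanishes at $x=0,\pi$ by the second fact above, so Wirtinger's inequality for functions vanishing at the endpoints (exactly as in the proof of Proposition~\ref{prop:strictstability}) makes that contribution nonnegative. The only remaining term is the $\theta$-average $A_0(x)$, for which there is no endpoint condition; here I would expand $A_0$ in the Neumann cosine basis $\{\cos(nx)\}_{n\geq 0}$, whose eigenvalues $n^2$ all exceed $1$ except for the constant mode $n=0$. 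Hence the sole potentially negative contribution is that of the constant $x$-mode of $A_0$, and the point is that the single constraint $\int_0^\pi A_0\sin x\,dx=0$ controls precisely this mode, since $\int_0^\pi\sin x\,dx=2\neq 0$.

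The main obstacle, and the only genuinely new difficulty compared with Proposition~\ref{prop:strictstability}, is to turn this heuristic into a clean inequality for the $k=0$ term. Writing $A_0=\sum_{n\geq 0}a_n\cos(nx)$, a direct computation of $\int_0^\pi\cos(nx)\sin x\,dx$ shows the constraint reads $a_0=\sum_{n\geq 2\text{ even}}a_n/(n^2-1)$, while the quadratic form for $A_0$ equals a positive multiple of $\sum_{n\geq 1}(n^2-1)a_n^2-2a_0^2$. It then suffices to bound $a_0^2\leq\tfrac12\sum_{n\geq 1}(n^2-1)a_n^2$, which follows from Cauchy--Schwarz applied to $a_0=\sum(n^2-1)^{-3/2}\cdot\sqrt{n^2-1}\,a_n$ once one checks the convergent numerical estimate $\sum_{n\geq 2\text{ even}}(n^2-1)^{-3}<\tfrac12$ (the series is dominated by its first term $1/27$). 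Combining the nonnegativity of every $\theta$-mode then gives $\mathcal{I}(u,u)\geq 0$, completing the proof.
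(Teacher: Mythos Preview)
Your argument is correct and, in fact, cleaner than the paper's. Both proofs pass to the polar coordinate form of $\mathcal{I}$ via Lemma~\ref{lem:indexpolar}, both exploit the constancy of $\xi$ in $\theta$ at the poles and the mean-zero constraint $\int\xi\sin x=0$, and both finish with a Fourier analysis. The organization, however, is different. The paper shifts variables to $[-\pi,\pi]\times[-\pi/2,\pi/2]$, splits $\psi$ into its even and odd parts in $t$, handles the odd part by Wirtinger for mean-zero functions, and then expands the even part as a full double Fourier series in $\{\cos(i\vartheta),\sin(i\vartheta)\}\times\{\cos(2jt)\}$; to treat the nonzero $\vartheta$-modes it derives the relation $a_{m0}=2\sum_{n\ge1}(-1)^{n+1}a_{mn}$ from the pole condition and then invokes the ad hoc combinatorial Lemma~\ref{lem:fouriersuma}. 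You instead Fourier-expand in $\theta$ first and observe that, for every $k\ge1$, the coefficient functions $A_k,B_k$ vanish at $x=0,\pi$, so the Dirichlet Wirtinger inequality (exactly as in Proposition~\ref{prop:strictstability}) disposes of those modes in one line; this bypasses Lemma~\ref{lem:fouriersuma} entirely. For the remaining zero $\theta$-mode both proofs use the volume constraint together with Cauchy--Schwarz, but your Neumann cosine expansion of $A_0$ on $[0,\pi]$ and the crude numerical bound $\sum_{n\ge2\text{ even}}(n^2-1)^{-3}<1/2$ is again more direct than the paper's estimate via $\sum j^{-2}=\pi^2/6$. The paper's route has the minor advantage of making the near-sharpness of the inequality visible through the explicit quadratic identities in Lemma~\ref{lem:fouriersuma}; yours is shorter and conceptually closer to the proof of Proposition~\ref{prop:strictstability}. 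One small omission: as in the paper, a preliminary density step reducing to $u\in C^2(\sla(p))$ is needed so that the Parseval identities for $\partial_x\xi$ and $A_0'$ are justified without further comment.
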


\begin{remark}
For a volume preserving variation of $\sla(p)$ with velocity vector $U$ such that $\escpr{U,N}=u$, we have $\int_{\sla(p)}u\,d\sla(p)=0$ by \eqref{eq:vprima} and $A''(0)=(A+2\la V)''(0)=\mathcal{I}(u,u)$ by \eqref{eq:2ndsla}.
\end{remark}

\begin{proof}[Proof of Theorem~\ref{th:stability}]
By standard approximation arguments, it suffices to prove the statement for test functions of class $C^2$. To this aim we will use double Fourier series on the rectangle $\rrr:=[-\pi,\pi]\times [-\pi/2,\pi/2]$. 

Take $u\in C^2(\sla(p))$ with $\int_{\sla(p)}u\,d\sla(p)=0$. If we define $\vartheta:=\theta-\pi$ and $t:=x-\pi/2$, then the equality in Lemma~\ref{lem:indexpolar} reads
\begin{equation*}
\tau\,\mathcal{I}(u,u)=\mathcal{Q}(\psi,\psi),
\end{equation*}
where $\mathcal{Q}$ is the symmetric bilinear form on $C^1(\rrr)$ given by
\begin{equation*}
\mathcal{Q}(\psi_1,\psi_2):=\int_\rrr\left\{\frac{\ptl\psi_1}{\ptl t}\,\frac{\ptl\psi_2}{\ptl t}-\psi_1\,\psi_2\right\}d\vartheta\,dt,
\end{equation*}
and $\psi:\rrr\to\rr$ is the function $\psi(\vartheta,t):=\xi(\vartheta+\pi,t+\pi/2)$. Note that $\psi(\vartheta,-\pi/2)=u(p)$ and $\psi(\vartheta,\pi/2)=u(p_\la)$ for any $\vartheta\in [-\pi,\pi]$, whereas $\psi(-\pi,t)=\psi(\pi,t)$ for any $t\in[-\pi/2,\pi/2]$. On the other hand, the mean zero condition for $u$ together with the expression for $d\sla(p)$ in polar coordinates (see Lemma~\ref{lem:slaF} (iii)) and the change of variables theorem, gives us 
\begin{equation*}
0=\frac{1}{\tau^3}\int_{[0,2\pi]\times [0,\pi]}\xi(\theta,x)\,\sin(x)\,d\theta\,dx=\frac{1}{\tau^3}\int_\rrr\psi(\vartheta,t)\,\cos(t)\,d\vartheta\,dt.
\end{equation*}
So, in order to to prove the claim, we must show that $\mathcal{Q}(\psi,\psi)\geq 0$ for any $\psi\in C^2(\rrr)$ such that: 
\begin{enumerate}
\item[(i)] $\psi(\vartheta,-\pi/2)$ and $\psi(\vartheta,\pi/2)$ are constant as functions of $\vartheta\in[-\pi,\pi]$,
\item[(ii)] $\psi(-\pi,t)=\psi(\pi,t)$, for any $t\in [-\pi/2,\pi/2]$,
\item[(iii)] $\int_R\psi(\vartheta,t)\cos(t)\,d\vartheta\,dt=0$.
\end{enumerate}

Fix $\psi\in C^2(\rrr)$ as above and define $\psi_s(\vartheta,t):=(\psi(\vartheta,t)+\psi(\vartheta,-t))/2$ and $\psi_a(\vartheta,t):=(\psi(\vartheta,t)-\psi(\vartheta,-t))/2$. Clearly $\psi=\psi_s+\psi_a$. Moreover, we have $\psi_s(\vartheta,-t)=\psi_s(\vartheta,t)$ and $\psi_a(\vartheta,-t)=-\psi_a(\vartheta,t)$, for any $t\in[-\pi/2,\pi/2]$. It follows from Fubini's theorem that $\mathcal{Q}(\psi_s,\psi_a)=0$ and that $\psi_s$ satisfies equality (iii) above. In particular, $\mathcal{Q}(\psi,\psi)=\mathcal{Q}(\psi_s,\psi_s)+\mathcal{Q}(\psi_a,\psi_a)$. Note that
\[
\mathcal{Q}(\psi_a,\psi_a)=\int_{-\pi}^\pi\left[\int_{-\pi/2}^{\pi/2}\left\{(\psi_a)_\vartheta'(t)^2-(\psi_a)_\vartheta(t)^2\right\}dt\right]d\vartheta,
\]
where $(\psi_a)_\vartheta(t):=\psi_a(\vartheta,t)$ and the derivatives are taken with respect to $t$. Let $(\overline{\psi}_a)_\vartheta$ be the continuous extension of $(\psi_a)_\vartheta$ to $[-\pi/2,3\pi/2]$ which is symmetric with respect to $t=\pi/2$. This function is piecewise $C^1$ with mean zero. So, we can apply Wirtinger's inequality for mean zero functions (see for instance \cite[p.~141]{mitrinovic}) to obtain
\[
2\int_{-\pi/2}^{\pi/2}(\psi_a)_\vartheta'(t)^2\,dt=
\int_{-\pi/2}^{3\pi/2}(\overline{\psi}_a)_\vartheta'(t)^2\,dt\geq\int_{-\pi/2}^{3\pi/2}(\overline{\psi}_a)_\vartheta(t)^2\,dt=2\int_{-\pi/2}^{\pi/2}(\psi_a)_\vartheta(t)^2\,dt,
\]   
and so $\mathcal{Q}(\psi_a,\psi_a)\geq 0$. Hence, to prove the theorem it suffices to see that $\mathcal{Q}(\psi_s,\psi_s)\geq 0$. Indeed, we will prove that $\mathcal{Q}(\psi,\psi)\geq 0$, for any $\psi\in C^2(\rrr)$ satisfying properties (i), (ii), (iii) and:
\begin{itemize}
\item[(iv)] $\psi(\vartheta,-t)=\psi(\vartheta,t)$, for any $(\vartheta,t)\in\rrr$.
\end{itemize}

Consider a function $\psi$ as above. The Fourier series of $\psi$ with respect to the basic orthogonal system in $L^2(\rrr)$ is the following sequence of functions (see \cite[Sect.~7.4]{tolstov}):
\begin{align}
\label{eq:fourier} 
S_{mn}(\vartheta,t):=\sum_{i=0}^{m}\sum_{j=0}^n\lambda_{ij}&\big\{a_{ij}\,\cos(i\vartheta)\,\cos(2jt)+b_{ij}\,\sin(i\vartheta)\,\cos(2jt)
\\
\nonumber
&+c_{ij}\,\cos(i\vartheta)\,\sin(2jt)+d_{ij}\,\sin(i\vartheta)\,\sin(2jt)\big\},
\end{align}
where $\la_{ij}$ is the real constant given by
\begin{equation}
\label{eq:laij}
\la_{ij}:=
\begin{cases}
1/4,\quad\text{for }i=j=0, 
\\
1/2,\quad\text{for }i=0, j\geq 1 \text{ or } i\geq 1, j=0,
\\
1,\quad \quad \text{for }i,j\geq 1,
\end{cases}
\end{equation}
and the coefficients $a_{ij}$, $b_{ij}$, $c_{ij}$, $d_{ij}$ are defined by
\begin{align*}
a_{ij}:=&\frac{2}{\pi^2}\int_\rrr\psi(\vartheta,t)\,\cos(i\vartheta)\,\cos(2jt)\,d\vartheta\,dt, \quad b_{ij}:=\frac{2}{\pi^2}\int_\rrr\psi(\vartheta,t)\,\sin(i\vartheta)\,\cos(2jt)\,d\vartheta\,dt,
\\
c_{ij}:=&\frac{2}{\pi^2}\int_\rrr\psi(\vartheta,t)\,\cos(i\vartheta)\,\sin(2jt)\,d\vartheta\,dt, \quad d_{ij}:=\frac{2}{\pi^2}\int_\rrr\psi(\vartheta,t)\,\sin(i\vartheta)\,\sin(2jt)\,d\vartheta\,dt.
\end{align*}
Note that $c_{ij}=d_{ij}=0$ as an elementary consequence of Fubini's theorem and (iv). On the other hand, the fact that $\psi(\vartheta,-\pi/2)=\psi(\vartheta,\pi/2)$ allows us to reproduce the computations in \cite[Sect.~3.10]{tolstov} to deduce that the Fourier series of $\ptl\psi/\ptl t$ coincides with
\[
\frac{\ptl S_{mn}}{\ptl t}(\vartheta,t)=\sum_{i=0}^{m}\sum_{j=0}^n\,(-2j)\,\lambda_{ij}\,\big\{a_{ij}\,\cos(i\vartheta)\,\sin(2jt)+b_{ij}\,\sin(i\vartheta)\,\sin(2jt)\big\}.
\]
Recall that for any $\phi\in L^2(\rrr)$ the associated Fourier series converges to $\phi$ in $L^2(\rrr)$. Thus, we have
\begin{align}
\label{eq:q1}
\int_\rrr\psi(\vartheta,t)\,h(\vartheta,t)\,d\vartheta\,dt=\sum_{m=0}^{\infty}\sum_{n=0}^\infty\,\lambda_{mn}\,&\bigg\{a_{mn}\int_\rrr\cos(i\vartheta)\,\cos(2jt)\,h(\vartheta,t)\,d\vartheta\,dt
\\
\nonumber
&+b_{mn}\int_\rrr\sin(i\vartheta)\,\cos(2jt)\,h(\vartheta,t)\,d\vartheta\,dt\bigg\},
\end{align}
for any $h\in L^2(\rrr)$. This implies the following Parseval identities for $\psi$ and $\ptl\psi/\ptl t$ (see \cite[Sect.~7.2]{tolstov}):
\begin{align*}
\frac{2}{\pi^2}\int_\rrr \psi^2\,d\vartheta\,dt&=\sum_{m,n=0}^{\infty}
\lambda_{mn}\,(a^2_{mn}+b^2_{mn}),
\\
\frac{2}{\pi^2}\int_\rrr\Big(\frac{\partial\psi}{\partial t}\Big)^2\,d\vartheta\,dt
&=\sum_{m,n=0}^{\infty}4n^2\,\lambda_{mn}\,(a^2_{mn}+b^2_{mn}).
\end{align*}
This ensures the absolute convergence of the numerical double series at the right hand sides. Now, the definition of the quadratic form $\mathcal{Q}(\psi,\psi)$ yields
\[
\frac{2}{\pi^2}\,\mathcal{Q}(\psi,\psi)=\sum_{m,n=0}^{\infty}
(4n^2-1)\,\lambda_{mn}\,(a^2_{mn}+b^2_{mn})=\sum_{m=0}^{\infty}
\sigma_m.
\]
Here $\{\sigma_m\}_{m\geq 0}$ is the sequence given by $\sigma_m:=\alpha_m+\beta_m$, where
\[
\alpha_m:=-\lambda_{m0}\,a^2_{m0}+ \sum_{n=1}^\infty
\,(4n^2-1)\,\lambda_{mn}\,a^2_{mn}, \quad \beta_m:=-\lambda_{m0}\,b^2_{m0}+ \sum_{n=1}^\infty
\,(4n^2-1)\,\lambda_{mn}\,b^2_{mn}.
\]
So, to finish the proof it suffices to check that $\alpha_m\geq 0$ and $\beta_m\geq 0$ for any $m\geq 0$.

It is clear that $\beta_0\geq 0$ since $b_{00}=0$. Let us see that $\alpha_0\geq 0$. By taking into account \eqref{eq:q1} and equality $\int_\rrr\psi(\vartheta,t)\cos(t)\,d\vartheta\,dt=0$ we get, after some easy computations
\[
\sum_{n=0}^\infty\la_{0n}\,a_{0n}\,\frac{(-1)^{n+1}}{4n^2-1}=0,
\]
which is equivalent by \eqref{eq:laij} to
\[
a_{00}=2\,\sum_{n=1}^{\infty}
\frac{(-1)^n}{4n^2-1}\,a_{0n}.
\]
On the other hand note that, for any $n \in \mathbb{N}$,
\begin{align*}
\Big(\sum_{i=1}^{n} \frac{2\,(-1)^i}{4i^2-1}\,a_{0i}\Big)^2&=\,4 \sum_{i,j=1}^{n}\frac{(-1)^{i+j}\,a_{0i}\,a_{0j}}{(4i^2-1)\,(4j^2-1)}\leq
2\,\sum_{i,j=1}^{n}\bigg(\frac{a^2_{0i}}{(4i^2-1)\,(4j^2-1)}+\frac{a^2_{0j}}{(4i^2-1)\,(4j^2-1)}\bigg)
\\
&=4\,\sum_{i,j=1}^n\frac{a_{0i}^2}{(4i^2-1)\,(4j^2-1)}=\,4\,\bigg(\sum_{i=1}^{n}\frac{a^2_{0i}}{4i^2-1}\bigg)\,\bigg(\sum_{j=1}^n\frac{1}{4j^2-1}\bigg)
\\
&\leq\frac{2\pi^2}{3}\,\sum_{i=1}^{n}\frac{a^2_{0i}}{4i^2-1}\leq 2\,\sum_{i=1}^n\,(4i^2-1)\,a^2_{0i},
\end{align*}
where we have used that $\sum_{j=1}^\infty j^{-2}=\pi^2/6$. Taking limits when $n\to\infty$ and using the expression for $a_{00}$ above we conclude that $\alpha_0\geq 0$.

Finally, let us see that $\alpha_m\geq 0$ and $\beta_m\geq 0$ for any $m\in\mathbb{N}$. As $\psi\in C^2(\rrr)$ we can follow the arguments in \cite[Sect.~3.10]{tolstov} to deduce that the Fourier series $S_{mn}$ of $\psi$ in \eqref{eq:fourier} converges uniformly to $\psi$ in $\rrr$, see also \cite{hardy}. Recall that $\psi(\vartheta,\pi/2)$ is a constant $c\in\rr$ which does not depend on $\vartheta\in [-\pi,\pi]$. So, we have
\[
\sum_{m,n=0}^\infty\lambda_{mn}\left\{a_{mn}\,(-1)^n\,\cos(m\vartheta)+b_{mn}\,(-1)^n\,\sin(m\vartheta)\right\}=c
\]
uniformly on $[-\pi,\pi]$. In particular, we get also convergence in $L^2(-\pi,\pi)$. So, for any $k\geq 1$, we can integrate on $[-\pi,\pi]$ the previous equation multiplied by $h(\vartheta):=\cos(k\vartheta)$ to obtain
\[
a_{m0}=2\,\sum_{n=1}^\infty (-1)^{n+1}\,a_{mn}.
\]
The same computation with $h(\vartheta):=\sin(k\vartheta)$ gives us
\[
b_{m0}=2\,\sum_{n=1}^\infty (-1)^{n+1}\,b_{mn}.
\]
The proof concludes by applying Lemma~\ref{lem:fouriersuma} below. 
\end{proof}

\begin{lemma} 
\label{lem:fouriersuma}
Let $\{x_n\}_{n\geq 0}$ be a sequence of real numbers such that  $x_0= 2\,\sum_{n=1}^\infty (-1)^{n+1}\,x_n$ and $\sum_{n=1}^\infty(4n^2-1)\,x_n^2<\infty$. Then, we have
\[
\frac{-1}{2}\,x_0^2 + \sum_{n=1}^\infty\,(4n^2-1)\,x_n^2 \geq 0.
\]
\end{lemma}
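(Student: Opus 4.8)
The plan is to rephrase the desired inequality as $\tfrac{1}{2}\,x_0^2 \le \sum_{n=1}^\infty (4n^2-1)\,x_n^2$ and then to bound the left-hand side directly from the defining relation $x_0 = 2\sum_{n=1}^\infty (-1)^{n+1}\,x_n$ by a single application of the Cauchy--Schwarz inequality. The natural weighting is to split each summand as $(-1)^{n+1}\,x_n = \bigl((-1)^{n+1}/\sqrt{4n^2-1}\bigr)\cdot\bigl(\sqrt{4n^2-1}\,x_n\bigr)$, chosen precisely so that the second factors reproduce the series appearing on the right-hand side of the claim.

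First I would write, using Cauchy--Schwarz on the defining relation,
\[
x_0^2 = 4\,\Bigl(\sum_{n=1}^\infty (-1)^{n+1}\,x_n\Bigr)^2 \le 4\,\Bigl(\sum_{n=1}^\infty \frac{1}{4n^2-1}\Bigr)\Bigl(\sum_{n=1}^\infty (4n^2-1)\,x_n^2\Bigr).
\]
The hypothesis $\sum_{n=1}^\infty (4n^2-1)\,x_n^2 < \infty$ guarantees that both factors on the right are finite, so the estimate is legitimate and, in particular, the series defining $x_0$ converges absolutely.

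Next I would evaluate the constant $\sum_{n=1}^\infty 1/(4n^2-1)$. Using the partial fraction decomposition $1/(4n^2-1) = \tfrac{1}{2}\bigl(1/(2n-1) - 1/(2n+1)\bigr)$, the series telescopes and sums to $\tfrac{1}{2}$. Substituting this value gives $x_0^2 \le 2\,\sum_{n=1}^\infty (4n^2-1)\,x_n^2$, which is exactly $\tfrac{1}{2}\,x_0^2 \le \sum_{n=1}^\infty (4n^2-1)\,x_n^2$, that is, the assertion $-\tfrac{1}{2}\,x_0^2 + \sum_{n=1}^\infty (4n^2-1)\,x_n^2 \ge 0$.

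There is no genuine obstacle beyond selecting the correct weights in the Cauchy--Schwarz step; the decisive fact is that the telescoping series evaluates to \emph{exactly} $\tfrac{1}{2}$, which makes the resulting constant match the factor $\tfrac{1}{2}$ in the statement and yields a sharp inequality, with equality precisely when $x_n$ is proportional to $(-1)^{n+1}/(4n^2-1)$.
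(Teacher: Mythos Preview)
Your proof is correct and considerably shorter than the paper's. The paper proceeds by an explicit sum-of-squares decomposition: it shows by induction that the partial quantity
\[
s_n:=-\tfrac{1}{2}\Big(\sum_{i=1}^n 2\,(-1)^{i+1}\,x_i\Big)^2+\sum_{i=1}^n (4i^2-1)\,x_i^2
\]
equals
\[
\sum_{i=1}^{n-1}\Big[(2i-1)\,x_i+\sum_{k=1}^{n-i}2\,(-1)^{k+1}\,x_{i+k}\Big]^2+(2n-1)^2\,x_n^2,
\]
which is manifestly nonnegative, and then passes to the limit. Your argument replaces this combinatorial induction by a single weighted Cauchy--Schwarz step, the key observation being that the telescoping sum $\sum_{n\geq 1}1/(4n^2-1)$ evaluates to exactly $\tfrac12$, so the constant produced by Cauchy--Schwarz is sharp. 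The paper's decomposition has the advantage of exhibiting the inequality as an identity plus squares (hence giving nonnegativity at every finite stage), while your approach is more direct and immediately identifies the equality case via the Cauchy--Schwarz equality condition; for the purposes of the lemma as stated, your route is simply cleaner.
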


\begin{proof}
For any $n\in\mathbb{N}$, we define
\begin{align*}
s_n:=\frac{-1}{2}\,\Big(\sum_{i=1}^n 2\,(-1)^{i+1}\,x_i\Big)^2+\sum_{i=1}^n \,(4i^2-1)\,x_i^2=\sum_{i=1}^n\,(4i^2-3)\,x_i^2-4\sum_{\substack{i,j=1\\i<j}}^n(-1)^{i+j}\,x_i\,x_j.
\end{align*}
If we show that $s_n\geq 0$ for any $n\in\mathbb{N}$, then the lemma follows by taking limits in the first equality above. Indeed, we will use induction to prove that
\begin{equation}
\label{eq:induction}
s_n=\sum_{i=1}^{n-1}\Big[(2i-1)\,x_i+\sum_{k=1}^{n-i}2\,(-1)^{k+1}\,x_{i+k}\Big]^2+(2n-1)^2\,x_n^2, \quad n\in\mathbb{N}.
\end{equation}

It is easy to see that \eqref{eq:induction} is valid for $n=1,2$. Suppose that \eqref{eq:induction} holds for some $n\geq 2$. Then
\begin{align*}
s_{n+1}&=s_n+\big(4\,(n+1)^2-3\big)\,x_{n+1}^2+4\,(-1)^{n}\,x_{n+1}\,\sum_{i=1}^n\,(-1)^i\,x_i
\\
&=\sum_{i=1}^{n-1}\Big[(2i-1)\,x_i+\sum_{k=1}^{n-i}2\,(-1)^{k+1}\,x_{i+k}\Big]^2+(2n-1)^2\,x_n^2
\\
&+\big(4\,(n+1)^2-3\big)\,x_{n+1}^2+4\,(-1)^n\,x_{n+1}\,\sum_{i=1}^n\,(-1)^i\,x_i
\\
&=\sum_{i=1}^{n}\Big[(2i-1)\,x_i+\sum_{k=1}^{n+1-i}\!2\,(-1)^{k+1}\,x_{i+k}\Big]^2+(2n+1)^2\,x_{n+1}^2
\\
&+4\,(-1)^n\,x_{n+1}\,\sum_{i=1}^n\,(-1)^i\,x_i-4\,(2n-1)\,x_n\,x_{n+1}
\\
&-4\,x_{n+1}\sum_{i=1}^{n-1}\Big[(-1)^{n+i}\,(2i-1)\,x_i+\sum_{k=1}^{n-i}2\,(-1)^{n-i+k+1}\,x_{i+k}\Big].
\end{align*}
This gives us the desired expression for $s_{n+1}$ in \eqref{eq:induction} provided
\[
\sum_{i=1}^n\,(-1)^i\,x_i=(-1)^n\,(2n-1)\,x_n+\sum_{i=1}^{n-1}\Big[(-1)^{i}\,(2i-1)\,x_i+\sum_{k=1}^{n-i} 2\,(-1)^{i-k-1}\,x_{i+k}\Big].
\]
The previous equality can be checked by induction. This completes the proof.
\end{proof}

\begin{remark}
Montefalcone studied a $1$-dimensional eigenvalues problem to prove in \cite[Prop.~4.9]{montefalcone-spheres} that the Pansu spheres in the Heisenberg groups $\mathbb{H}^n$ are radially stable under a volume constraint. More precisely, it is shown that $A''(0)\geq 0$ for volume-preserving admissible variations with velocity vector $U=(\mnh\,u)\,N$ and such that $u$ is radially symmetric. 
\end{remark}

\begin{remark}
It is important to emphasize that our stability results in this section are valid for any variation of $\sla(p)$ for which formula \eqref{eq:2ndsla} holds. From Theorem~\ref{th:2ndsla} this happens for admissible variations $C^3$ off of the poles.  In Appendix~B it is shown that the family of such variations is very large. However, there are other admissible variations of $\sla(p)$ with regularity less than $C^3$ off of the poles and satisfying \eqref{eq:2ndsla}, see Remarks~\ref{re:seno} for precise references. 
\end{remark}

\section{The isoperimetric problem in sub-riemannian $3$-space forms}
\label{sec:isoperimetric}

In this section we apply our previous results to deduce some interesting consequences in relation to the \emph{isoperimetric problem}. We first introduce some basic definitions. 

Let $M$ be a Sasakian sub-Riemannian $3$-manifold. For any Borel set $\Om\subeq M$, the \emph{volume} of $\Om$ is the Riemannian volume $V(\Om)$ in $(M,g)$. This volume functional differs by a constant from the signed volume defined in \eqref{eq:volume} since its first variation coincides, up to sign, with \eqref{eq:vprima}. As in \cite{fssc} the \emph{perimeter} of $\Om$ can be introduced as
\begin{equation}
\label{eq:per}
P(\Om):=\sup\left\{\int_\Om\divv U\,dM;\,|U|\leq 1\right\},
\end{equation}
where $U$ ranges over $C^1$ horizontal vector fields with compact support on $M$. Here $dM$ and $\divv$ are the Riemannian volume and divergence in $(M,g)$, respectively. If $\Om$ is bounded by a $C^2$ surface $\Sg$, then we have $P(\Om)=A(\Sg)$ by the Riemannian divergence theorem. An \emph{isoperimetric region} or \emph{minimizer} in $M$ is a set $\Om\subeq M$ satisfying $P(\Om)\leq P(\Om')$, for any other $\Om'\subeq M$ with $V(\Om')=V(\Om)$.

If $M$ is homogeneous then isoperimetric regions of any volume exist and they are all bounded sets,  see \cite[Thm.~6.1 and Lem.~4.6]{galli-ritore}. As we pointed out in the Introduction the regularity of these minimizers is a delicate question, whose answer is far from being established. Nevertheless, if $\Om$ is a $C^2$ isoperimetric region, then it is clear that $\ptl\Om$ is a compact volume-preserving area-stationary surface. Thus, we can invoke Alexandrov's theorem (Theorem~\ref{th:alexandrov}) to get the following result, already known in the Heisenberg group $\mathbb{M}(0)$, see \cite[Thm.~7.2]{rr2}.

\begin{corollary}
\label{cor:isop}
The boundary of any $C^2$ isoperimetric region $\Om$ in $\e$ with $\kappa\leq 0$ is a spherical surface $\sla(p)$.
\end{corollary}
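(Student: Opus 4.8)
The plan is to recognize $\ptl\Om$ as a surface to which the Alexandrov theorem (Theorem~\ref{th:alexandrov}) applies and then simply read off the conclusion. First I would isolate the four hypotheses of that theorem and verify them one at a time for $\Sg:=\ptl\Om$. Since $\e$ with $\kappa\leq 0$ is homogeneous, the existence and boundedness results of Galli and Ritor\'e \cite{galli-ritore} ensure that the isoperimetric region $\Om$ is bounded; as $\Om$ is assumed to be of class $C^2$, its boundary $\Sg$ is then a compact $C^2$ surface. Being the boundary of a region in the orientable manifold $\e$, the surface $\Sg$ is two-sided, hence orientable, with the outward unit normal fixing a global orientation. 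Finally, because $\Om$ minimizes perimeter among sets of the same volume, every volume-preserving deformation of $\Sg$ satisfies $A'(0)=0$, so $\Sg$ is area-stationary under a volume constraint.

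The only hypothesis of Theorem~\ref{th:alexandrov} that needs genuine care is connectedness, since the boundary of a connected region need not be connected. I would handle this by arguing componentwise. Each connected component $\Sg_i$ of $\Sg$ is itself compact, orientable, $C^2$ and volume-preserving area-stationary (the constant mean curvature $\la$ is the common Lagrange multiplier of the minimizer, and the orthogonality condition in Theorem~\ref{th:vpstationary} is local), so Theorem~\ref{th:alexandrov} gives $\Sg_i=\s_\la(p_i)$ with one and the same $\la\geq 0$. By Theorem~\ref{th:sla}~(i) the spheres $\s_\la(p_i)$ are pairwise congruent, and since an isometry preserves volume, two of them cannot be nested one strictly inside the other. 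Hence the topological balls $B_i$ bounded by the embedded spheres $\Sg_i$ are pairwise disjoint. As $\e$ is homeomorphic to $\rr^3$ for $\kappa\leq 0$, the complement of $\bigsqcup_i\Sg_i$ consists of the balls $B_i$ together with a single unbounded component, and the boundedness of $\Om$ forces $\Om=\bigsqcup_i B_i$ up to a null set.

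The proof then closes by invoking connectedness of the minimizer: an isoperimetric region is indecomposable, hence connected, so the disjoint union $\bigsqcup_i B_i$ can consist of a single summand. Therefore $\Sg=\ptl\Om=\s_\la(p)$ for a single pole $p$ and a single curvature $\la$, which is exactly the assertion. (Equivalently, once the components are identified as congruent spheres, one could compare $\bigsqcup_i B_i$ with a single larger ball of the same volume, but the connectedness route is cleaner.)

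The step I expect to be the main obstacle is precisely this connectedness input. The reduction from ``$\Sg$ connected'' to ``$\Om$ connected'' via the no-nesting observation is elementary, but the indecomposability of an isoperimetric region is nontrivial: in the flat case $\kappa=0$ it can be seen by comparing $\bigsqcup_i B_i$ with a single dilated Pansu ball through the anisotropic Heisenberg dilations, whereas for $\kappa<0$ no such dilation exists and one must instead appeal to the strict subadditivity of the isoperimetric profile of the homogeneous space $\e$ (equivalently, strict concavity of the profile with vanishing value at the origin), which belongs to the standard theory of minimizers in homogeneous manifolds \cite{galli-ritore}. Everything else in the argument --- boundedness, compactness, orientability and stationarity --- is routine.
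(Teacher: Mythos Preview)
Your approach is the same as the paper's: reduce to Theorem~\ref{th:alexandrov}. The paper treats the corollary as an immediate consequence and does not spell out the verification of the hypotheses at all, so your write-up is considerably more careful than the original.

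You are right to flag connectedness as the only nontrivial point, and your componentwise strategy (apply Alexandrov to each component, observe the resulting spheres are congruent and hence non-nestable, then rule out multiple components) is sound. The one place where your argument is not fully self-contained is the last step: you invoke indecomposability of isoperimetric regions via strict subadditivity of the profile and attribute this to \cite{galli-ritore}, but that paper establishes existence and boundedness, not concavity or subadditivity of the profile. For $\kappa=0$ you can close this directly from the explicit formulas $A(\s_\la)=\pi^2/\la^3$, $V(\mathcal{B}_\la)=3\pi^2/(8\la^4)$, which give $A\sim V^{3/4}$ and hence $I(kv)=k^{3/4}I(v)<kI(v)$. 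For $\kappa<0$ the cleanest route, once you already know each component is a congruent sphere $\s_\la(p_i)$, is the comparison you mention parenthetically: replace the $k$ disjoint balls by a single $\s_{\la'}(p)$ enclosing the same total volume and check from the one-parameter family that the perimeter strictly decreases. This is an explicit computation in the model (using Lemma~\ref{lem:slaF}) and avoids appealing to general profile theory that is not in the cited reference.
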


In order to get the same conclusion in $\e$ with $\kappa>0$ from Theorem~\ref{th:alexandrov}, we should prove first that $\ptl\Om$ is contained inside an open hemisphere of $\mathbb{S}^3$. Another possibility is to show that any $C^2$ compact volume-preserving area-stationary torus in $\e$ with $\kappa>0$ (see some examples in Remark~\ref{re:tekel}) is unstable under a volume constraint. In such a case, we would obtain that $\ptl\Om$ is topologically a sphere by Theorem~\ref{th:topology} and so, we would deduce Corollary~\ref{cor:isop} from Theorem~\ref{th:hopf}. This is a work in progress of the authors \cite{hr3}.

The previous results might suggest that the following conjecture is probably true: \emph{in the model spaces $\e$ any isoperimetric region is bounded by a spherical surface $\sla(p)$}. In the first Heisenberg group $\mathbb{M}(0)$ this conjecture was posed in 1982 by Pansu~\cite[p.~172]{pansu1} and has been supported by several partial results, see some references in the Introduction.  Our stability result in Theorem~\ref{th:stability} provides evidence that our conjecture for any $\e$ should hold. 

On the other hand, one may also ask if the spherical surfaces $\sla(p)$ will bound isoperimetric regions inside any $3$-space form. As happens in Riemannian geometry this is not true in general.

\begin{example}
\label{ex:mendicuti}
Consider the Sasakian flat cylinder $M:=\mathbb{M}(0)/G$ given in Example~\ref{ex:topology}. By construction, the projection map $\Pi:\mathbb{M}(0)\to M$ is an isometry when restricted to the slab $\rr^2\times (0,2\pi)$. We compare isoperimetrically the spherical surfaces $\sla(p)$ in $M$ with $p:=\Pi(o)$ and $\la>1/2$ with the family of tori $\Sg_R:=\mathbb{S}^1(R)\times\mathbb{S}^1=\Pi(\mathbb{S}^1(R)\times [0,2\pi])$, $R>0$. Since the volume in $\mathbb{M}(0)$ coincides with the Lebesgue measure in $\rr^3$, a straightforward computation from Lemma~\ref{lem:slaF} (iii), see also \cite[Rem.~7.3]{rr2}, implies that
\[
A(\sla(p))=\frac{\pi^2}{\la^3},\quad V(\mathcal{B}_\la(p))=\frac{3\pi^2}{8\la^4},
\]  
where $\mathcal{B}_\la(p)$ is the topological $3$-ball in $M$ enclosed by $\sla(p)$. On the other hand, we have
\[
A(\Sg_R)=4\pi^2R,\quad V(\Om_R)=2\pi^2R^2,
\]
where $\Om_R$ denotes the solid $3$-torus in $M$ bounded by $\Sg_R$. From the previous equalities it is easy to check that, for any $v\in((27/8)\pi^2,6\pi^2)$, the torus $\Sg_R$ enclosing volume $v$ has strictly less area than the sphere $\sla(p)$ of the same volume. As a consequence, none of the spheres $\sla(p)$ with $1/2<\la<1/\sqrt[4]{9}$ bounds a minimizer in $M$. Moreover, if we assume $C^2$ regularity of isoperimetric regions in $M$, then we can use the topological restriction in Theorem~\ref{th:topology} together with Hopf's theorem (Theorem~\ref{th:hopf}) to deduce the existence of isoperimetric tori in $M$.
\end{example}

In the previous example the spheres $\sla(p)$ fail to be isoperimetric when the enclosed volume is sufficiently large. We finish this section with the following question: are the spheres $\sla(p)$ of an arbitrary homogeneous Sasakian $3$-manifold $M$ boundaries of isoperimetric regions for small volumes?

\appendix
\section{Proof of the second variation formula}
\label{sec:appendix1}
\setcounter{theorem}{0}
\setcounter{equation}{0}  
\setcounter{subsection}{0}

In this section we prove Theorem~\ref{th:2ndsla}. We first derive a second variation formula for admissible variations possibly moving the singular set of arbitrary CMC surfaces in Sasakian sub-Riemannian $3$-manifolds.

\begin{theorem}
\label{2ndgeneral}
Let $\Sg$ be an orientable $C^2$ surface, possibly with boundary, immersed in a Sasakian sub-Riemannian $3$-mani\-fold $M$. Suppose that $\Sg-\Sg_0$ is $C^3$ and take a variation $\varphi:I\times\Sg\to M$ which is $C^3$ off of $\Sg_0$. Denote by $U:=uN+Q$ the associated velocity vector field, where $N$ is the unit normal to $\Sg$ and $Q:=U^\top$. If $\Sg$ has constant mean curvature $H$ and the variation is admissible, then the functional $A+2HV$ is twice differentiable at the origin, and we have
\begin{align}
\label{eq:gen2nd}
(A+2HV)''(0)&=\int_{\Sg}|N_{h}|^{-1}\left\{Z(u)^2-
\big(|B(Z)+S|^2+4\,(K-1)\,|N_{h}|^2\big)\,u^2\right\}d\Sg
\\
\nonumber
&+\int_{\Sg}\divv_\Sg\big\{\escpr{N,T}\,\big(1-\escpr{B(Z),S}\big)\,u^2\,Z\big\}\,d\Sg
\\
\nonumber
&+\int_{\Sg}\divv_\Sg\big\{\escpr{N,T}\,\big(2H\,\mnh\,u^2-w\big)\,S\big\}\,d\Sg
\\
\nonumber
&+\int_\Sg\divv_\Sg\big(\mnh\,W^\top\big)\,d\Sg+\int_{\Sg}\divv_\Sg\big(h_1 Z+h_2\,S\big)\,d\Sg,
\end{align}
provided all the terms are locally integrable with respect to $d\Sg$. In the previous formula $\{Z,S\}$ is the tangent orthonormal basis defined in \eqref{eq:nuh} and \eqref{eq:ese}, $B$ is the Riemannian shape operator, $K$ is the Webster scalar curvature, the function $w$ is the normal component of the acceleration vector field $W:=D_UU$, and $h_1,h_2$ are given by
\begin{align*}
h_1:=2&\,\big\{H\,\escpr{Q,Z}+\escpr{N,T}\,\escpr{D_SQ,Z}+\mnh^{-1}\,\escpr{Q,S}\,\big(\escpr{B(Z),S}+\escpr{N,T}^2\big)\big\}\,u
\\
&+\mnh\,\big(\escpr{Q,Z}\,\escpr{D_SQ,S}-\escpr{Q,S}\,\escpr{D_SQ,Z}\big)
\\
&+\escpr{N,T}\,\escpr{Q,Z}^2\,\big(1-\escpr{B(Z),S}\big)-\escpr{N,T}\,\escpr{Q,Z}\,\escpr{Q,S}\,\escpr{B(S),S},
\end{align*}
\vspace{-0,5cm}
\begin{align*}
\hspace{-0,85cm}h_2:=-2&\,\big\{H\,\escpr{Q,S}+\escpr{N,T}\,\escpr{D_ZQ,Z}-\mnh\,\escpr{Q,Z}\big\}\,u
\\
&+\mnh\,\big(\escpr{Q,S}\,\escpr{D_ZQ,Z}-\escpr{Q,Z}\,\escpr{D_ZQ,S}\big)
\\
&+2H\,\mnh\,\escpr{N,T}\,\escpr{Q,Z}^2+\escpr{N,T}\,\escpr{Q,Z}\,\escpr{Q,S}\,\big(1+\escpr{B(Z),S}\big).
\end{align*}
If $U=uN$ then the last integral in \eqref{eq:gen2nd} vanishes. If $\Sg$ has empty boundary and $\varphi$ is supported on $\Sg-\Sg_0$, then all the divergence terms in \eqref{eq:gen2nd} vanish.
\end{theorem}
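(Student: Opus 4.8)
The plan is to reduce the computation of $(A+2HV)''(0)$ to a pointwise second-derivative computation on the regular set, which is exactly what the notion of admissibility was designed to permit. Writing $A(s)=\int_{\Sg-\Sg_0}f(s,p)\,d\Sg$ with $f(s,p):=\mnh_p(s)\,|\text{Jac}\,\varphi_s|_p$ as in \eqref{eq:duis}, the first variation $A'(s)=\int_{\Sg-\Sg_0}(\ptl f/\ptl s)(s,p)\,d\Sg$ is already available. To differentiate once more I would apply Lemma~\ref{lem:difint} to the function $g:=\ptl f/\ptl s$: conditions (ii), (iii) and (iv) of Definition~\ref{def:admissible} are precisely the hypotheses of Lemma~\ref{lem:difint} for $g$, while integrability of $g(s,\cdot)$ for each $s$ follows from the absolute continuity in (ii) together with the domination in (iii) and the integrability of the first-variation integrand $g(0,\cdot)$ (since $g(s,p)=g(0,p)+\int_0^s(\ptl^2 f/\ptl\sigma^2)(\sigma,p)\,d\sigma$). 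This yields $A''(0)=\int_{\Sg-\Sg_0}(\ptl^2 f/\ptl s^2)(0,p)\,d\Sg$. The volume term is handled analogously and more easily from \eqref{eq:volume}, using that $dM$ is smooth; recall its first derivative is $\int_\Sg u\,d\Sg$ by \eqref{eq:vprima}. Thus the whole problem becomes the pointwise evaluation of $(\ptl^2 f/\ptl s^2)(0,p)$ together with the volume contribution.

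The second and main step is this pointwise computation. I would factor $f=ab$ with $a(s):=|\text{Jac}\,\varphi_s|$ the Riemannian area-element ratio (so $a(0)=1$) and $b(s):=\mnh\circ\varphi_s$, so that $f''(0)=a''(0)\,\mnh+2\,a'(0)\,b'(0)+b''(0)$. The first and second derivatives of the factor $a$ along a variation with velocity $U=uN+Q$ and acceleration $W:=D_UU$ are the standard Riemannian ones and involve $\divv_\Sg Q$, the shape operator $B$ and the Ricci curvature. The factor $b=\mnh$ is differentiated by means of \eqref{eq:vmnh} and \eqref{eq:dvnuh}, its second derivative requiring one further differentiation of these identities. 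Throughout, I would convert all ambient derivatives into data intrinsic to $\Sg$ using the frames $\{Z,S\}$ and $\{Z,\nuh,T\}$ together with the Sasakian structure equations \eqref{eq:dujv}, the relations \eqref{eq:relations}, and the curvature formulas \eqref{eq:ruvu}--\eqref{eq:ricvv}; the appearance of the Webster scalar curvature $K$ in the potential term $|B(Z)+S|^2+4(K-1)\mnh^2$ comes precisely from \eqref{eq:ruvu} and \eqref{eq:ricvv}. The expected output is a sum of three kinds of terms: the genuine quadratic form $\mnh^{-1}\{Z(u)^2-(\ldots)u^2\}$, terms linear in the tangential data $Q$ and in the normal component $w$ of $W$, and terms that are total tangential divergences.

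The third step is the bookkeeping that organizes this expression into \eqref{eq:gen2nd}. The crucial point is to recognize exact divergences $\divv_\Sg(\,\cdot\,)$ and to collect all contributions involving $Q=U^\top$ and $w$ into the stated vector fields, producing the coefficients $h_1,h_2$ and the $\mnh\,W^\top$ term. I expect this rearrangement to be the hard part: unlike the first variation, one cannot simply discard boundary contributions, since the whole interest of the formula is to retain them, so every sign must be tracked and each grouping correctly identified as a genuine surface divergence. The main algebraic ingredients will be the quantities $\divv_\Sg Z$, $\divv_\Sg S$, and $Z(\escpr{N,T})$, $S(\escpr{N,T})$, computed from \eqref{eq:vnt} and the ruling identity $D_ZZ=(2H)\,\nuh$ of Proposition~\ref{prop:ruling}.

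Finally, the two closing assertions follow at once from the explicit form of the last divergence term. If $U=uN$ then $Q=U^\top=0$ identically, hence $D_ZQ=D_SQ=0$, and inspecting $h_1$ and $h_2$ shows that every summand carries a factor $\escpr{Q,Z}$, $\escpr{Q,S}$ or a covariant derivative of $Q$; therefore $h_1=h_2=0$ and the last integral in \eqref{eq:gen2nd} vanishes. If $\ptl\Sg=\emptyset$ and $\varphi$ is supported in $\Sg-\Sg_0$, then $u$, $Q$ and $W$ all vanish outside a compact subset of the open surface $\Sg-\Sg_0$, so each tangential field appearing under a $\divv_\Sg$ in \eqref{eq:gen2nd} is compactly supported in $\Sg-\Sg_0$, and the Riemannian divergence theorem on $\Sg-\Sg_0$ makes every divergence integral vanish.
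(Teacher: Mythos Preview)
Your proposal is correct and follows essentially the same route as the paper: differentiate $A(s)$ under the integral sign using admissibility and Lemma~\ref{lem:difint}, expand $(\partial^2 f/\partial s^2)(0,p)$ via the product rule on $f=|N_h|\cdot|\text{Jac}\,\varphi_s|$, compute each factor's first and second derivatives from the standard Riemannian Jacobian formulas together with \eqref{eq:vmnh}--\eqref{eq:dvnuh} and the Sasakian curvature identities \eqref{eq:ruvt}--\eqref{eq:ricvv}, add the volume contribution, and then do the lengthy algebraic regrouping into the stated divergence terms using the expressions for $\divv_\Sg Z$, $\divv_\Sg S$ and the derivatives of $\escpr{N,T}$ and $|N_h|$ (the paper records these as Lemma~\ref{lem:aux4} and the block~\eqref{useful}). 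Your justifications for the two closing assertions are also exactly those of the paper.
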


The theorem was proved in \cite[Thm.~5.2]{rosales} for some particular variations with support in $\Sg-\Sg_0$, velocity vector $U=uN$ and acceleration vector $W=wN$. We will follow the arguments there, with the corresponding modifications that appear since $W^\top$ and $Q$ need not vanish on $\Sg$. The idea is to apply differentiation under the integral sign so that, after a long calculus, we will obtain \eqref{eq:gen2nd} with the help of Lemma~\ref{lem:aux4} below.

\begin{proof}[Proof of Theorem~\ref{2ndgeneral}]
We extend $U$ along the variation by $U(\varphi_{s}(p)):=(d/dt)|_{t=s}\,\varphi_{t}(p)$. Let $N$ be a vector field whose restriction to any $\Sg_{s}:=\var_s(\Sg)$ is a unit normal vector.

We first compute $A''(0)$.  Equation \eqref{eq:duis} yields
\[
A(s)=\int_{\Sg-\Sg_{0}}\big(\mnh\circ\var_{s}\big)
\,|\text{Jac}\,\var_{s}|\,d\Sg.
\]
Since the variation is admissible we can apply Lemma~\ref{lem:difint} to get 
\begin{equation}
\label{eq:2nd1} 
A''(0)=\int_{\Sg-\Sg_{0}}\left\{\mnh''(0)+2\,\mnh'(0)\,|\text{Jac}\,\var_{s}|'(0)+\mnh\,|\text{Jac}\,\var_{s}|''(0)\right\}d\Sg,
\end{equation}
where the derivatives are taken with respect to $s$. Let us compute the different terms in \eqref{eq:2nd1}. 

The calculus of $|\text{Jac}\,\var_{s}|'(0)$ and $|\text{Jac}\,\var_{s}|''(0)$ can be found in \cite[Sect.~9]{simon} and \cite[Lem.~5.4]{rosales}. On the one hand, we have
\begin{align}
\label{eq:jac1}
|\text{Jac}\,\var_{s}|'(0)=&\divv_{\Sg}U=(-2H_{R})\,u +\divv_\Sg Q=-\big(2H\,\mnh+\escpr{B(S),S}\big)\,u+\divv_\Sg Q
\end{align}
where $-2H_{R}:=\divv_{\Sg}N$ is the Riemannian mean curvature of $\Sg$, and we have used that $2H_{R}=\escpr{B(Z),Z}+\escpr{B(S),S}=2H\,\mnh+\escpr{B(S),S}$. On the other hand, if for any $p\in\Sg-\Sg_0$, we choose the orthonormal basis $\{e_1,e_2\}:=\{Z_p,S_p\}$ of $T_p\Sg$, and we take into account that $D_eU=e(u)\,N_p-u(p)\,B(e)+D_eQ$ for any $e\in T_p\Sg$, then we deduce
\begin{align} 
\label{eq:enemigo}
|\text{Jac}\,\var_{s}|''(0)&=\divv_{\Sg}W+(\divv_{\Sg}U)^2 +\sum_{i=1}^2|(D_{e_{i}}U)^\bot|^2
\\
\nonumber
&-\sum_{i=1}^2\escpr{R(U,e_{i})U,e_{i}}
-\sum_{i,j=1}^2\escpr{D_{e_{i}}U,e_{j}}\,\escpr{D_{e_{j}}U,e_{i}}
\\
\nonumber 
&=\divv_{\Sg}W+|\nabla_{\Sg}u|^2+|B(Q)|^2+2\,\escpr{\nabla_{\Sg}u,B(Q)}-\ric(U,U)
\\
\nonumber 
&+\escpr{R(U,N)U,N}+2\,\big(\escpr{D_Z U,Z}\,\escpr{D_S U,S}-\escpr{D_Z U,S}\,\escpr{D_S U,Z}\big),
\end{align}
where $\nabla_{\Sg}u$ is the gradient of $u$ relative to $\Sg$ and $\ric$ is the Ricci tensor in $(M,g)$. In the previous formula the curvature term $\escpr{R(U,N)U,N}-\ric(U,U)$ can be computed from \eqref{eq:ruvt}, \eqref{eq:ruvu} and \eqref{eq:ricvv}. After simplifying, this term equals
\[
4\,(1-K)\,\big(\escpr{N,T}^2\,|Q|^2+2\,\mnh\,\escpr{N,T}\,\escpr{Q,S}\,u+\mnh^2\,u^2\big)-|Q|^2-2u^2,
\]
which together with the fact that
\[
\escpr{D_{e_i} U,e_j}=-\escpr{B(e_i),e_j}\,u+\escpr{D_{e_i}Q,e_j},
\]
gives us the identity
\begin{align}
\label{eq:jac3} 
|\text{Jac}\,\var_{s}|''(0)&=\divv_{\Sg}W+|\nabla_{\Sg}u|^2+|B(Q)|^2+2\,\escpr{\nabla_\Sg u,B(Q)}-|Q|^2
\\
\nonumber 
&+2\,\big(2H\,\mnh\,\escpr{B(S),S}-\escpr{B(Z),S}^2-1\big)\,u^2
\\
\nonumber 
&+4\,(1-K)\,\big(\escpr{N,T}^2\,|Q|^2+2\,\mnh\,\escpr{N,T}\,\escpr{Q,S}\,u+\mnh^2\,u^2\big)
\\
\nonumber 
&+2\,\escpr{B(Z),S}\,\big(\escpr{D_Z Q,S}+\escpr{D_S
Q,Z}\big)\,u
\\
\nonumber 
&-2\,\escpr{B(S),S}\,\escpr{D_ZQ,Z}\,u-4H\,\mnh\,\escpr{D_SQ,S}\,u
\\
\nonumber 
&+2\,\big(\escpr{D_ZQ,Z}\,\escpr{D_SQ,S}-\escpr{D_Z
Q,S}\,\escpr{D_SQ,Z}\big).
\end{align}

Now we obtain expressions for $\mnh'(0)$ and $\mnh''(0)$.  From \eqref{eq:vmnh} and \eqref{eq:conmute} it follows that
\[
\mnh'(s)=U(\mnh)=\escpr{D_{U}N,\nuh}+\escpr{N,T}\,\escpr{U,Z}.
\]
The second equality in \eqref{eq:relations} together with the fact that 
\begin{equation}
\label{eq:dun}
D_{U}N=-\nabla_{\Sg}u-B(Q),
\end{equation}
implies that
\begin{equation}
\label{eq:dmnh}
|N_{h}|'(0)=-\escpr{N,T}\,\big(S(u)+\escpr{B(Q),S}-\escpr{Q,Z}\big).
\end{equation}
Moreover, we have
\begin{equation}
\label{eq:d2mnh1}
|N_{h}|''(0)=\escpr{D_{U}D_{U}N,\nuh}+\escpr{D_{U}N,D_{U}\nuh}+U\big(\escpr{N,T}\big)\,\escpr{Q,Z}
+\escpr{N,T}\,U\big(\escpr{U,Z}\big).
\end{equation}
By using \eqref{eq:dvnuh}, \eqref{eq:dun} and the third relation in \eqref{eq:relations}, we get
\begin{align}
\label{eq:d2mnh12}
\escpr{D_{U}N,D_{U}\nuh}&=\mnh^{-1}\,Z(u)^2+\escpr{N,T}\,Z(u)\,u
\\
\nonumber 
&+\mnh^{-1}\,\big(\escpr{N,T}^2\,\escpr{Q,S}+2\,\escpr{B(Q),Z}\big)\,Z(u)
\\
\nonumber
&+\mnh\,\escpr{Q,Z}\,S(u)+\escpr{N,T}\,\escpr{B(Q),Z}\,u
\\
\nonumber 
&+\mnh^{-1}\big(\escpr{B(Q),Z}^2+\escpr{N,T}^2\,\escpr{Q,S}\,\escpr{B(Q),Z}\big)+\mnh\,\escpr{Q,Z}\,\escpr{B(Q),S}.
\end{align}
It is also easy to check from \eqref{eq:vnt} that
\begin{equation}
\label{eq:d2mnh13}
U\big(\escpr{N,T}\big)=\mnh\,\big(S(u)+\escpr{B(Q),S}-\escpr{Q,Z}\big).
\end{equation}
On the other hand
\begin{align}
\label{eq:d2mnh14}
U\big(\escpr{U,Z}\big)&=\escpr{W,Z}+\escpr{D_UZ,\nuh}\,\escpr{U,\nuh}+\escpr{D_UZ,T}\,\escpr{U,T}
\\
\nonumber
&=\escpr{W,Z}-\escpr{Z,D_U\nuh}\,\escpr{U,\nuh}-\escpr{Z,D_UT}\,\escpr{U,T}
\\
\nonumber 
&=\escpr{W,Z}+\big(u+\mnh^{-1}\,\escpr{N,T}\,\escpr{Q,S}\big)\big(Z(u)+\escpr{B(Q),Z}+\escpr{Q,S}\big),
\end{align}
where the last equality comes from \eqref{eq:dvnuh} and \eqref{eq:relations}. It remains to compute $D_{U}D_{U}N$. For a fixed point $p\in\Sg-\Sg_{0}$ we denote $E_i(s):=e_i(\varphi_s)$. It is clear that $E_i(0)=e_i$ and $[U,E_i]=0$. Moreover, $\{E_1(s),E_2(s)\}$ is a basis of the tangent space to $\Sg_s$ at $\varphi_s(p)$. Note that $\{e_{1},e_{2},N_p\}$ is an orthonormal basis of $T_pM$. As a consequence
\[
D_{U}D_{U}N=\sum_{i=1}^2\escpr{D_{U}D_{U}N,E_{i}}\,E_{i}
+\escpr{D_{U}D_{U}N,N}\,N.
\]
Since $\escpr{N,E_{i}}=0$ and $[U,E_i]=0$, then we get
\begin{align*}
\escpr{D_{U}D_{U}N,E_{i}}&=-2\,\escpr{D_{U}N,D_{U}E_i}-\escpr{N,D_{U}D_{U}E_i}=-2\,\escpr{D_{U}N,D_{e_i}U}-\escpr{N,D_{U}D_{e_i}U}
\\
\nonumber
&=-2\,\escpr{D_{U}N,D_{e_{i}}U}+\escpr{R(U,E_{i})U,N}-\escpr{D_{e_{i}}W,N}
\\
\nonumber
&=\,2\,\escpr{\nabla_{\Sg}u,D_{e_{i}}U}+2\,\escpr{B(Q),D_{e_i}U}+\escpr{R(U,E_{i})U,N}-\escpr{D_{e_{i}}W,N},
\end{align*}
where we have employed \eqref{eq:dun}. Moreover, since $|N|^2=1$ on $\Sg$, we deduce
\[
\escpr{D_{U}D_{U}N,N}=-|D_{U}N|^2=-|\nabla_{\Sg}u|^2-|B(Q)|^2-2\,\escpr{\nabla_\Sg u,B(Q)}.
\]
Recall that $e_{1}=Z_{p}$ and $e_{2}=S_{p}$. Then, the previous equalities together with $\escpr{S,\nuh}=\escpr{N,T}$ lead us to the expression
\begin{align}
\label{eq:d2mnh11}
\escpr{D_{U}D_{U}N,\nuh}&=\escpr{N,T}\,\escpr{D_U D_UN,S}+\mnh\,\escpr{D_U D_U N,N}
\\
\nonumber 
&=\escpr{N,T}\,\big(\!-2\,\escpr{\nabla_\Sg u,B(S)}\,u+2\,\escpr{\nabla_\Sg u,D_S Q}-2\,\escpr{B(Q),B(S)}\,u
\\
\nonumber 
&+2\,\escpr{B(Q),D_SQ}+\escpr{R(U,S)U,N}-\escpr{D_SW,N}\big)
\\
\nonumber 
&-\mnh\,\big(|\nabla_\Sg u|^2+|B(Q)|^2+2\,\escpr{\nabla_\Sg u,B(Q)}\big).
\end{align}
The curvature term above can be derived from \eqref{eq:ruvt}, \eqref{eq:ruvu} and \eqref{eq:ricvv}, so that we obtain
\[
\escpr{R(U,S)U,N)}=4\,(K-1)\,\mnh\,\escpr{N,T}\,\escpr{Q,Z}^2-\escpr{Q,S}\,u.
\]

Equations \eqref{eq:d2mnh11}, \eqref{eq:d2mnh12}, \eqref{eq:d2mnh13} and \eqref{eq:d2mnh14} allow us to compute $\mnh''(0)$ from \eqref{eq:d2mnh1}. We use the resulting formula together with \eqref{eq:dmnh}, \eqref{eq:jac1} and \eqref{eq:jac3}. By taking into account the equalities
\begin{alignat*}{4}
&\quad \nabla_\Sg u&&=Z(u)\,Z+S(u)\,S,\hspace{1.8cm} B(S)&&=\escpr{B(Z),S}\,Z+\escpr{B(S),S}\,S,
\\
& \divv_\Sg Q&&=\escpr{D_ZQ,Z}+\escpr{D_SQ,S}, \quad (D_SQ)^\top&&=\escpr{D_SQ,Z}\,Z+\escpr{D_SQ,S}\,S,
\end{alignat*}
and simplifying, we obtain
\begin{align}
\label{eq:defini1}
&\mnh''(0)+2\,\mnh'(0)\,|\text{Jac}\,\var_{s}|'(0)+\mnh\,|\text{Jac}\,\var_{s}|''(0)
\\
\nonumber
&=\mnh^{-1}\,Z(u)^2+2\,\escpr{N,T}\,\big(1-\escpr{B(Z),S}\big)\,Z(u)\,u+4H\,\mnh\,\escpr{N,T}\,S(u)\,u
\\
\nonumber 
&+\mnh\,\divv_\Sg W-\escpr{N,T}\,\escpr{D_SW,N}+\escpr{N,T}\,\escpr{W,Z}
\\
\nonumber 
&+q_1\,u^2+q_2\,u+q_3\,Z(u)+q_4\,S(u)+q_5\,\escpr{D_ZQ,Z}+q_6\,\escpr{D_SQ,S}+q_7\,
\escpr{D_SQ,Z}
\\
\nonumber 
&+2\,\mnh\,\big(\escpr{D_ZQ,Z}\,\escpr{D_SQ,S}
-\escpr{D_ZQ,S}\,\escpr{D_SQ,Z}\big)+2\,\mnh\,\escpr{B(Z),S}\,\escpr{D_ZQ,S}\,u
\\
\nonumber 
&+\mnh^{-1}\,\escpr{B(Q),Z}\,\big(\escpr{B(Q),Z}
+2\,\escpr{N,T}^2\,\escpr{Q,S}\big)+2\,\mnh\,\escpr{Q,Z}\,\escpr{B(Q),S}
\\
\nonumber 
&+\mnh^{-1}\,\escpr{Q,S}^2-2\,\mnh\,|Q|^2+4\,(1-K)\,\mnh\,\escpr{N,T}^2\,\escpr{Q,S}^2,
\end{align}
where the functions $q_i$ with $i=1,\ldots,7$ are defined by
\begin{align}
\nonumber 
q_1&=4H\,\mnh^2\,\escpr{B(S),S}-2\,\mnh\,\escpr{B(Z),S}^2+4\,(1-K)\,\mnh^3-2\,\mnh,
\\
\nonumber 
q_2&=2\,\escpr{N,T}\,\big\{\escpr{B(Q),Z}\,\big(1-\escpr{B(Z),S}\big)-\escpr{Q,Z}\,\escpr{B(S),S}
\\
\nonumber 
&+2H\,\mnh\,\big(\escpr{B(Q),S}-\escpr{Q,Z}\big)+4\,(1-K)\,\mnh^2\,\escpr{Q,S}\big\}, 
\\
\nonumber 
q_3&=2\,\big(\escpr{N,T}\,\escpr{D_SQ,Z}+\mnh^{-1}\,\escpr{B(Q),Z}+\mnh^{-1}\,\escpr{N,T}^2\,\escpr{Q,S}\big),
\\
\nonumber 
q_4&=2\,\big(\mnh\,\escpr{Q,Z}-\escpr{N,T}\,\escpr{D_ZQ,Z}\big),
\\
\nonumber
q_5&=2\,\big\{\escpr{N,T}\,\big(\escpr{Q,Z}-\escpr{B(Q),S}\big)-\mnh\,\escpr{B(S),S}\,u\big\},
\\
\nonumber 
q_6&=2\,\escpr{N,T}\,\escpr{Q,Z}-4H\,\mnh^2\,u,
\\
\nonumber
q_7&=2\,\big(\escpr{N,T}\,\escpr{B(Q),Z}+\mnh\,\escpr{B(Z),S}\,u\big).
\end{align}

Note that equations \eqref{eq:2nd1} and \eqref{eq:defini1} provide the derivative $A''(0)$. Now, we proceed to the calculus of $V''(0)$. From \eqref{eq:vprima} we have
\[
V'(s)=\int_{\Sg_{s}}\escpr{U,N}\,d\Sg_{s}=\int_{\Sg}\big{(}\escpr{U,N}\circ\varphi_{s}\big{)}\,|\text{Jac}\,
\var_{s}|\,d\Sg,
\]
which implies that
\begin{equation}
\label{eq:vdosprima}
V''(0)=\int_\Sg\big\{\escpr{U,N}'(0)+u\,|\text{Jac}\,\varphi_s|'(0)\big\}\,d\Sg.
\end{equation}
Thus, from \eqref{eq:dun} and \eqref{eq:jac1} we deduce
\begin{align}
\label{eq:defini2}
\escpr{U,N}'(0)+u\,|\text{Jac}\,\var_{s}|'(0)&=w-\escpr{\nabla_\Sg u,Q}-\escpr{B(Q),Q}
\\
\nonumber
&+u\,\divv_\Sg Q-\big(2H\,\mnh+\escpr{B(S),S}\big)\,u^2.
\end{align}

We conclude from \eqref{eq:2nd1}, \eqref{eq:defini1},
\eqref{eq:vdosprima} and \eqref{eq:defini2} that
\begin{equation}
\label{eq:nemo} 
(A+2HV)''(0)=\int_{\Sg-\Sg_0}\beta\,d\Sg,
\end{equation}
where the function $\beta$ has the expression
\begin{align}
\label{eq:nemo2}
\beta&:=\mnh^{-1}\,Z(u)^2+2\,\escpr{N,T}\,\big(1-\escpr{B(Z),S}\big)\,Z(u)\,u 
\\
\nonumber 
&+4H\,\mnh\,\escpr{N,T}\,S(u)\,u+\big(q_1-4H^2\,\mnh-2H\,\escpr{B(S),S}\big)\,u^2
\\
\nonumber 
&+2Hw+\mnh\,\divv_\Sg W-\escpr{N,T}
\,\escpr{D_SW,N}+\escpr{N,T}\,\escpr{W,Z}
\\
\nonumber 
&+q_2\,u+\big(q_3-4H\,\escpr{Q,Z}\big)\,Z(u)
+\big(q_4-4H\,\escpr{Q,S}\big)\,S(u)
\\
\nonumber 
&+\divv_\Sg(2Hu\,Q)-2H\,\escpr{B(Q),Q}+q_5\,\escpr{D_ZQ,Z}+q_6\,\escpr{D_SQ,S}+q_7\,\escpr{D_SQ,Z}
\\
\nonumber 
&+2\,\mnh\,\big(\escpr{D_ZQ,Z}\,\escpr{D_SQ,S}-\escpr{D_ZQ,S}\,\escpr{D_SQ,Z}\big)+2\,\mnh\,\escpr{B(Z),S}\,\escpr{D_ZQ,S}\,u
\\
\nonumber 
&+\mnh^{-1}\,\escpr{B(Q),Z}\,\big(\escpr{B(Q),Z}+2\,\escpr{N,T}^2\,\escpr{Q,S}\big)+2\,\mnh\,\escpr{Q,Z}\,\escpr{B(Q),S}
\\
\nonumber 
&+\mnh^{-1}\,\escpr{Q,S}^2-2\,\mnh\,|Q|^2+4\,(1-K)\,\mnh\,\escpr{N,T}^2\,\escpr{Q,S}^2.
\end{align}
Let us simplify the terms containing $W$. From \eqref{eq:vmnh} and the fact that $\nuh^\top=\escpr{N,T}\,S$, it is easy to check that
\[
\divv_\Sg\big(\mnh\,W^\top\big)=\mnh\,\divv_\Sg W^\top-\escpr{N,T}\,\escpr{B(S),W}+\escpr{N,T}\,\escpr{W,Z},
\]
and so 
\begin{align*}
2Hw&+\mnh\,\divv_\Sg W-\escpr{N,T}
\,\escpr{D_SW,N}+\escpr{N,T}\,\escpr{W,Z}
\\
&=\divv_\Sg\big(\mnh\,W^\top\big)-\escpr{N,T}\,S(w)+\big(2H\,\escpr{N,T}^2-\mnh\,\escpr{B(S),S}\big)\,w.
\end{align*}
Now, we can use the computations below \cite[Eq.~(5.20)]{rosales} to infer that the first three lines of \eqref{eq:nemo2} equal
\begin{align}
\label{eq:nemo3}
|N_{h}|^{-1}&\big\{Z(u)^2-\big(|B(Z)+S|^2+4\,(K-1)\,|N_{h}|^2\big)\,u^2\big\}
\\
\nonumber 
&+\divv_\Sg\big\{\escpr{N,T}\,\big(1-\escpr{B(Z),S}\big)\,u^2\,Z\big\}
\\
\nonumber
&+\divv_\Sg\big\{\escpr{N,T}\,\big(2H\,\mnh\,u^2-w\big)\,S\big\}+\divv_\Sg\big(\mnh\,W^\top\big).
\end{align}
To prove the statement, it suffices to show that the remainder summands in \eqref{eq:nemo2} equal $\divv_\Sg\,(h_1\,Z+h_2\,S)$. This is a long but straightforward calculus similar to the one at the end of \cite[Proof of Thm.~5.2]{rosales}. Indeed, by using Lemma~\ref{lem:aux4} below and the formulas
\begin{align}
\label{useful}
S\big(\escpr{N,T}\big)&=\mnh\,\escpr{B(S),S}, \hspace{3cm} \ S\big(\mnh\big)=-\escpr{N,T}\,\escpr{B(S),S},
\\
\nonumber
Z\big(\escpr{N,T}\big)&=\mnh\,\big(\escpr{B(Z),S}-1\big), \hspace{2.01cm} \ Z\big(\mnh\big)=\escpr{N,T}\,\big(1-\escpr{B(Z),S}\big),
\\
\nonumber 
\escpr{D_SZ,S}&=\mnh^{-1}\,\escpr{N,T}\,\big(1+\escpr{B(Z),S}\big),\quad \ \!\escpr{D_ZS,Z}=-2H\,\escpr{N,T},
\\
\nonumber
Z\big(\escpr{B(Z),S}\big)&=4\,\mnh\,\escpr{N,T}\,(1-K-H^2)-2\,\mnh^{-1}\,\escpr{N,T}\,\escpr{B(Z),S}\,\big(1+\escpr{B(Z),S}\big),
\end{align}
we can deduce \eqref{eq:gen2nd} from \eqref{eq:nemo}, \eqref{eq:nemo2} and \eqref{eq:nemo3}. This finishes the proof.
\end{proof}

\begin{lemma}
\label{lem:aux4}{$($\cite[Lem.~5.5]{rosales}$)$.}
Let $\Sg$ be an orientable $C^2$ surface immersed in a Sasakian sub-Riemannian $3$-manifold $M$. For any $\phi\in C^1(\Sg)$ we have the following equalities in $\Sg-\Sg_{0}$
\begin{align*}
\divv_{\Sg}(\phi\,Z)&=Z(\phi)+|N_{h}|^{-1}\,\escpr{N,T}\,\big(1+\escpr{B(Z),S}\big)\,\phi, \quad \divv_{\Sg}(\phi\,S)=S(\phi)-2H\,\escpr{N,T}\,\phi.
\end{align*}
\end{lemma}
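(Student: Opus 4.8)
The plan is to compute both surface divergences straight from the definition. For any vector field $X$ tangent to $\Sg$ along the regular set, $\divv_\Sg X=\escpr{D_Z X,Z}+\escpr{D_S X,S}$, since $\{Z,S\}$ is an orthonormal frame of $T\Sg$ there and pairing $DX$ against tangent vectors discards its normal part. Taking $X=\phi Z$ and using $\escpr{Z,S}=0$ together with $\escpr{D_Z Z,Z}=\frac{1}{2}Z(|Z|^2)=0$, everything collapses to $\divv_\Sg(\phi Z)=Z(\phi)+\phi\,\escpr{D_S Z,S}$; likewise $\divv_\Sg(\phi S)=S(\phi)+\phi\,\escpr{D_Z S,Z}$. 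So the lemma reduces to the two pointwise identities $\escpr{D_S Z,S}=\mnh^{-1}\escpr{N,T}\big(1+\escpr{B(Z),S}\big)$ and $\escpr{D_Z S,Z}=-2H\escpr{N,T}$.

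The second identity is immediate: from $\escpr{Z,S}=0$ we get $\escpr{D_Z S,Z}=-\escpr{S,D_Z Z}$, Proposition~\ref{prop:ruling} gives $D_Z Z=(2H)\,\nuh$, and $S=\escpr{N,T}\,\nuh-\mnh\,T$ shows $\escpr{\nuh,S}=\escpr{N,T}$, whence $\escpr{D_Z S,Z}=-2H\escpr{N,T}$.

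The first identity is the only real computation. First I would write $Z=J(\nuh)$ and apply the Sasakian identity \eqref{eq:dujv} with $U=S$, $V=\nuh$, obtaining $D_S Z=J(D_S\nuh)+\escpr{\nuh,T}\,S-\escpr{S,\nuh}\,T=J(D_S\nuh)-\escpr{N,T}\,T$, since $\escpr{\nuh,T}=0$ and $\escpr{S,\nuh}=\escpr{N,T}$. Pairing with $S$ and using $\escpr{T,S}=-\mnh$ from \eqref{eq:relations} gives $\escpr{D_S Z,S}=\escpr{J(D_S\nuh),S}+\mnh\escpr{N,T}$. Next I would evaluate $D_S\nuh$ from \eqref{eq:dvnuh} with $v=S$: since $\escpr{S,Z}=0$ the vertical term vanishes and $D_S\nuh=\mnh^{-1}\big(\escpr{D_S N,Z}-\escpr{N,T}^2\big)\,Z$. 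Applying $J$ sends $Z=J(\nuh)$ to $-\nuh$, so $\escpr{J(D_S\nuh),S}=-\mnh^{-1}\big(\escpr{D_S N,Z}-\escpr{N,T}^2\big)\escpr{N,T}$. Finally I substitute $\escpr{D_S N,Z}=-\escpr{B(S),Z}=-\escpr{B(Z),S}$ by self-adjointness of the shape operator $B$, and rewrite the stray $\mnh\escpr{N,T}$ via $\mnh=\mnh^{-1}(1-\escpr{N,T}^2)$, using $\mnh^2+\escpr{N,T}^2=1$ from \eqref{eq:relations}. The $\escpr{N,T}^2$ contributions cancel, leaving exactly $\mnh^{-1}\escpr{N,T}\big(1+\escpr{B(Z),S}\big)$.

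The main (and only) obstacle is the $\escpr{D_S Z,S}$ computation, where one must keep the horizontal and vertical contributions of $D_S\nuh$ straight; the two simplifications that make it work are that $J$ returns the characteristic direction to $-\nuh$ and that the structural relation $\mnh^2+\escpr{N,T}^2=1$ absorbs the leftover vertical term. The remaining steps are formal expansions relying only on the orthonormality of $\{Z,\nuh,T\}$.
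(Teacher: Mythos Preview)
Your proof is correct. The paper does not give its own proof of this lemma---it simply cites \cite[Lem.~5.5]{rosales}---but the two pointwise identities you reduce to, $\escpr{D_SZ,S}=\mnh^{-1}\escpr{N,T}(1+\escpr{B(Z),S})$ and $\escpr{D_ZS,Z}=-2H\escpr{N,T}$, are precisely the formulas listed without proof in \eqref{useful}, so your computation is exactly the intended one.
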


\begin{remarks}
\label{re:seno}
1. The regularity hypotheses in Theorem~\ref{2ndgeneral} are necessary to compute the different terms in the proof for an arbitrary variation $\varphi$. However, it is possible to derive the second variation for some particular variations of a surface $\Sg$ with less regularity and $\Sg_0=\emptyset$, see for instance \cite[Thm.~14.5]{dgn}, \cite[Thm.~3.7]{hrr}, \cite[Thm.~5.2]{rosales}, \cite[Thm.~7.3]{galli} and \cite[Thm.~4.1]{galli-ritore2}. In our case the $C^3$ regularity of $\Sg-\Sg_0$ does not suppose loss of generality since we are only interested in stability properties of the spherical surfaces $\sla(p)$.

2. The fact that the variation $\varphi$ is admissible is only used to differentiate under the integral sign in \eqref{eq:duis}. If a variation is not admissible then $A''(0)$ may exist or not and, in case of existence, it is not easy to compute it. Some examples of these situations were discussed in the Heisenberg group~\cite[Prop.~3.11]{hrr} and in pseudo-Hermitian $3$-manifolds, see \cite[Ex.~4.3]{ch2} and \cite[Lem.~7.7]{galli}. 
\end{remarks}

Now, we proceed to obtain the second variation formula in Theorem~\ref{th:2ndsla}. For the proof we will show that all the divergence terms in \eqref{eq:gen2nd} vanish for variations of a spherical surface $\sla(p)$. The main ingredients are the integrability of $\mnh^{-1}$ in Corollary~\ref{cor:integrability} and a divergence theorem which extends the classical situation valid for vector fields supported off of the poles. 

\begin{lemma}
\label{divergence} 
Let $X$ be a bounded and tangent $C^1$ vector field on $\sla(p)$ minus the poles such that $\divv_{\sla(p)}X$ is integrable with respect to $d\sla(p)$.  Then, we have
\[
\int_{\sla(p)}\divv_{\sla(p)}X\,d\sla(p)=0.
\]
\end{lemma}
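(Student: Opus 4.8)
The plan is to exhaust $\sla(p)$ by compact surfaces with boundary obtained by deleting small coordinate neighborhoods of the two poles, apply the classical Riemannian divergence theorem on each piece, and then let these neighborhoods shrink. Using the polar coordinates $F(\theta,s)=\ga_\theta(s)$ from Lemma~\ref{lem:slaF} and writing $\tau:=\sqrt{\la^2+\kappa}$, for small $\delta>0$ I would set $\Sg_\delta:=F\big([0,2\pi]\times[\delta,\pi/\tau-\delta]\big)$. This is a compact surface whose boundary consists of the two circles $C_\delta^-:=F(\cdot,\delta)$ and $C_\delta^+:=F(\cdot,\pi/\tau-\delta)$ encircling the south and north poles, and on which $X$ is $C^1$. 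The usual divergence theorem on $\Sg_\delta$ then gives
\[
\int_{\Sg_\delta}\divv_{\sla(p)}X\,d\sla(p)=\int_{C_\delta^-}\escpr{X,n}\,dl+\int_{C_\delta^+}\escpr{X,n}\,dl,
\]
where $n$ is the outward unit conormal to $\ptl\Sg_\delta$ in $\sla(p)$ and $dl$ denotes the length element of the boundary circles.

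As $\delta\to 0$ the surfaces $\Sg_\delta$ exhaust $\sla(p)$ minus the poles, which has full measure for $d\sla(p)$. Since $\divv_{\sla(p)}X$ is integrable by hypothesis, dominated convergence (with dominating function $|\divv_{\sla(p)}X|$) yields $\int_{\Sg_\delta}\divv_{\sla(p)}X\,d\sla(p)\to\int_{\sla(p)}\divv_{\sla(p)}X\,d\sla(p)$. Thus the statement will follow once I show that both boundary integrals tend to $0$.

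The main obstacle is precisely the behaviour at the two poles, where $X$ is only assumed to be bounded; the point is to prove that the lengths of the shrinking circles collapse. Writing $|X|\leq c$, I estimate $\big|\int_{C_\delta^\pm}\escpr{X,n}\,dl\big|\leq c\,\mathrm{length}(C_\delta^\pm)$, so it suffices to check that $\mathrm{length}(C_\delta^\pm)\to 0$. The circle $C_\delta^-$ is parameterized by $\theta\mapsto F(\theta,\delta)$ with velocity $V_\theta(\delta)=(\ptl F/\ptl\theta)(\theta,\delta)$, whence its length equals $\int_0^{2\pi}|V_\theta(\delta)|\,d\theta$. From the expression $V_\theta=-(\la v)\,\dot{\ga}_\theta+(v'/2)\,J(\dot{\ga}_\theta)+v\,T$ in the proof of Lemma~\ref{lem:slaF} and the orthonormality of $\{\dot{\ga}_\theta,J(\dot{\ga}_\theta),T\}$, I obtain $|V_\theta|^2=(\la^2+1)\,v^2+(v'/2)^2$, which depends only on $s$ through $v(s)=\sin^2(\tau s)/\tau^2$. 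Since $v(\delta)\to 0$ and $v'(\delta)\to 0$ as $\delta\to 0$, the integrand is independent of $\theta$ and tends to $0$, so $\mathrm{length}(C_\delta^-)=2\pi\,|V_\theta(\delta)|\to 0$; the identical computation with $v(\pi/\tau-\delta)\to 0$ handles $C_\delta^+$. Passing to the limit $\delta\to 0$ in the divergence identity then forces both sides to their limits and yields $\int_{\sla(p)}\divv_{\sla(p)}X\,d\sla(p)=0$, as claimed.
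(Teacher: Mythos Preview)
Your proof is correct and follows essentially the same approach as the paper: exhaust $\sla(p)$ by removing shrinking neighborhoods of the poles, apply the Riemannian divergence theorem on each compact piece, pass to the limit in the interior integral by dominated convergence, and kill the boundary terms using the boundedness of $X$ and the fact that the boundary circles have vanishing length. The only difference is cosmetic: the paper speaks abstractly of ``small spherical caps'' $D_i(\eps)$ around the poles, whereas you make this explicit via the polar coordinates $F(\theta,s)$ and carry out the length computation $|V_\theta(\delta)|\to 0$ in detail.
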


\begin{proof}
For any $\eps>0$ small enough, let $\Sg_\eps:=\sla(p)-(D_1(\eps)\cup D_2(\eps))$, where $D_i(\eps)$, $i=1,2$, are small spherical caps centered at the poles. From the divergence theorem we obtain
\begin{equation}
\label{eq:divriem}
\int_{\Sg_\eps}\divv_{\sla(p)}X\,d\sla(p)=-\sum_{i=1}^2\,\int_{\ptl D_i(\eps)}\escpr{X,\eta_i}\,dl,
\end{equation}
where $\eta_i$ is the inner conormal to $\ptl D_i(\eps)$ in $\sla(p)$, and $dl$ in the Riemannian element of length. Note that the left hand side term above tends to $\int_{\sla(p)}\divv_{\sla(p)}X\,d\sla(p)$ when $\eps\to 0$ by the dominated convergence theorem. On the other hand, since $X$ is bounded, the right hand side term goes to zero when $\eps\to 0$. This proves the claim.
\end{proof}

\begin{proof}[Proof of Theorem~\ref{th:2ndsla}]
We know that $\sla(p)$ is a $C^\infty$ surface off of the poles with constant mean curvature $\lambda$. Hence, for an admissible variation $\varphi$ which is also $C^3$ off of the poles, it is possible to apply Theorem~\ref{2ndgeneral} to compute $(A+2\lambda V)''(0)$. To prove the claim it suffices to use \eqref{eq:potential}, and to see that the divergence terms in \eqref{eq:gen2nd} are all equal to zero.

Consider the $C^1$ tangent vector field $X:=\escpr{N,T}\,\big(1-\escpr{B(Z),S}\big)\,u^2\,Z$ defined on $\sla(p)$ minus the poles. Recall that $\escpr{B(Z),S}=(1-\tau^2)\,\mnh^2$ by Lemma~\ref{lem:slaF} (iii) and so, $X$ is bounded. Moreover, by using Lemma~\ref{lem:aux4} together with some of the equalities in \eqref{useful} and the integrability of $\mnh^{-1}$ in Corollary~\ref{cor:integrability}, we deduce that $\divv_{\sla(p)}X$ is integrable with respect to $d\sla(p)$. So, the integral of $\divv_{\sla(p)}X$ vanishes as a consequence of Lemma~\ref{divergence}. The same argument holds for the vector field $X:=\escpr{N,T}\,\big(2H\,\mnh\,u^2-w\big)\,S$. Note that the integral of $\divv_{\sla(p)}\,(\mnh\,W^\top)$ also vanishes by the divergence theorem since $\mnh\,W^\top$ is a Lipschitz tangent vector field on $\sla(p)$. Hence, to finish the proof we must show that 
\begin{equation}
\label{eq:sisi}
\int_{\sla(p)}\divv_{\sla(p)}X\,d\sla(p)=0,
\end{equation}
where $X:=h_1\,Z+h_2\,S$, and the functions $h_1$, $h_2$ are defined below equation \eqref{eq:gen2nd}. At the end of the proof of Theorem~\ref{2ndgeneral} we obtained that $\divv_{\sla(p)}X$ is equal to the function $\beta$ in \eqref{eq:nemo2} by removing the first three lines. This long expression only contains bounded functions and the term $\mnh^{-1}$, which is integrable on $\sla(p)$. From here, it follows that $\divv_{\sla(p)}X$ is integrable with respect to $d\sla(p)$. Furthermore, from the definitions of $h_1$ and $h_2$ we deduce that all the terms contained in $X$ are bounded vector fields off of the poles, with the exception of 
\[
2\,\mnh^{-1}\,\escpr{Q,S}\,\big(\escpr{B(Z),S}+\escpr{N,T}^2\big)\,u\,Z.
\]
Since $\escpr{B(Z),S}=(1-\tau^2)\,\mnh^2$ and $\mnh^2+\escpr{N,T}^2=1$, then the unique unbounded vector in the previous formula is $2\,\mnh^{-1}\,\escpr{Q,S}\,u\,Z$. Hence, we cannot apply directly Lemma~\ref{divergence} to infer equality \eqref{eq:sisi}. However, the conclusion of Lemma~\ref{divergence} also holds provided the right hand side term in \eqref{eq:divriem} tends to zero as $\eps\to 0$. So, to prove \eqref{eq:sisi} we only have to see that
\begin{equation}
\label{eq:storm}
\lim_{\eps\to 0}\,\int_{\ptl D_i(\eps)}\mnh^{-1}\,\escpr{Q,S}\,\escpr{Z,\eta_i}\,u\,dl=0, \quad i=1,2,
\end{equation}
where $D_i(\eps)$, $i=1,2$, are small spherical caps centered at the poles, and $\eta_i$ is a unit vector tangent to $\sla(p)$ and normal to $\ptl D_i(\eps)$.

Fix a positive orthonormal basis $\{e_1,e_2\}$ in the contact plane $\mathcal{H}_p$. Let $F(\theta,s):=\ga_\theta(s)$ be the flow of CC-geodesics of curvature $\lambda$ with $\ga_\theta(0)=p$ and $\dot{\ga}_\theta(0)=(\cos\theta)\,e_1+(\sin\theta)\,e_2$. As in Lemma~\ref{lem:slaF} we consider the unit normal along $\sla(p)$ such that $Z=\dot{\ga}_\theta$ off of the poles. For $\eps>0$ small enough, the curve $\ptl D_1(\eps)$ is parameterized by the curve $\alpha_\eps:[0,2\pi]\to\sla(p)$ given by $\alpha_\eps(\theta):=F(\theta,\eps)$. From equations \eqref{eq:esepolar} and \eqref{eq:slan}, we get
\[
\dot{\alpha}_\eps(\theta)=\frac{\ptl F}{\ptl\theta}(\theta,s)=V_\theta(\eps)=-\big(\lambda\,v(\eps)\big)\,Z-\sqrt{v(\eps)^2+(v'(\eps)/2)^2}\,S, 
\]
where $v(\eps):=\sin^2(\tau\eps)/\tau^2$ and $\tau:=\sqrt{\la^2+\kappa}$. The previous formula implies that
\[
\eta_1:=\frac{\sqrt{v(\eps)^2+(v'(\eps)/2)^2}\,Z-\big(\lambda\,v(\eps)\big)\,S}{|V_\theta(\eps)|}
\]
is a unit normal to $\ptl D_1(\eps)$ tangent to $\sla(p)$. Thus, we have
\[
\int_{\ptl D_1(\eps)}\mnh^{-1}\,\escpr{Q,S}\,\escpr{Z,\eta_1}\,u\,dl=\frac{v(\eps)^2+(v'(\eps)/2)^2}{v(\eps)}\int_0^{2\pi}\big(\escpr{Q,S}\,u\big)(\alpha_\eps(\theta))\,d\theta.
\]
On the one hand, it is straightforward to check that 
\[
\lim_{\eps\to 0}\frac{v(\eps)^2+(v'(\eps)/2)^2}{v(\eps)}=1.
\]
On the other hand, if we denote $h_\eps(\theta):=(\escpr{Q,S}\,u)(\alpha_\eps(\theta))$, then the continuity of $Q$ and $u$ in $\sla(p)$ together with equality $S=\escpr{N,T}\,\nuh-\mnh\,T=-\escpr{N,T}\,J(\dot{\ga}_\theta)-\mnh\,T$, gives us $|h_\eps(\theta)|\leq c$, for some $c>0$ not depending on $\eps$, and $\lim_{\eps\to 0}h_\eps(\theta)=h(\theta)$, where
\[
h(\theta):=u(p)\,\big(\escpr{Q_p,e_1}\sin\theta-\escpr{Q_p,e_2}\cos\theta\big).
\]
Thus, we can apply the dominated convergence theorem to conclude that
\[
\lim_{\eps\to 0}\int_0^{2\pi}\big(\escpr{Q,S}\,u\big)(\alpha_\eps(\theta))\,d\theta=\int_0^{2\pi}h(\theta)\,d\theta=0.
\]
This yields equation \eqref{eq:storm} for $i=1$. The case $i=2$ is similar. This proves \eqref{eq:sisi} and finishes the proof of the theorem.
\end{proof}

\section{Examples of admissible variations}
\label{sec:appendix2}
\renewcommand{\thesection}{B}
\setcounter{theorem}{0}
\setcounter{equation}{0}  
\setcounter{subsection}{0}

The notion of admissible variation was introduced in Definition~\ref{def:admissible}. For such variations we can apply Lemma~\ref{lem:difint} to differentiate under the integral sign twice in \eqref{eq:duis}. In this appendix we will construct several admissible variations. We start with some immediate examples.

\begin{example}
\label{ex:killing}
Let $U$ be a sub-Riemannian Killing field on $M$. This means that any diffeomorphism $\varphi_s$ of the associated one-parameter group is an isometry of $M$. So, $|\text{Jac}\,\varphi_s|_p=1$ and $\mnh_p(s)=\mnh(p)$ for any $s\in I$ and any $p\in\Sg$. From here it is immediate that the resulting variation of $\Sg$ is admissible and the area functional is constant. Observe also that these variations need not fix the singular set $\Sg_0$. 
\end{example}

\begin{example}
\label{ex:trivial}
Suppose that $\Sg$ has $C^3$ regular set $\Sg-\Sg_0$. Consider a variation $\varphi$ which is $C^3$ on $\Sg-\Sg_0$, and such that $\mnh_p(s)>0$ for any $p\in\Sg-\Sg_0$ and any $s\in I$. Geometrically, this means that any $\varphi_s$ preserves the regular set, i.e., $\varphi_s(\Sg-\Sg_0)\subeq\Sg_s-(\Sg_s)_0$. In particular, the function $s\mapsto f(s,p):=\mnh_p(s)\,|\text{Jac}\,\varphi_s|_p$ is $C^2$ for any $p\in\Sg-\Sg_0$, which implies conditions (i), (ii) and (iv) in Definition~\ref{def:admissible}. Moreover, we have
\[
\frac{\ptl^2 f}{\ptl s^2}(s,p)=\mnh_p''(s)\,|\text{Jac}\,\varphi_s|_p+2\,\mnh_p'(s)\,|\text{Jac}\,\varphi_s|_p'(s)+\mnh_p(s)\,|\text{Jac}\,\varphi_s|_p''(s).
\]
So, a possible way of constructing admissible variations is to bound the different terms in the previous formula by locally integrable functions not depending on $s$. This is clear for $|\text{Jac}\,\varphi_s|'_p$ and $|\text{Jac}\,\varphi_s|''_p$ since they are continuous on $I\times\Sg$. On the other hand, note that
\[
\mnh_p'(s)=U_{\varphi_s(p)}(\mnh)=\escpr{D_UN_h,\nuh}
\big(\varphi_s(p)\big),
\]
whereas
\[
\mnh_p''(s)=\escpr{D_UD_UN_h,\nuh}\big(\varphi_s(p)\big)+ \escpr{D_UN_h,D_U\nuh}\big(\varphi_s(p)\big).
\]
From the expression for $D_U\nuh$ in \eqref{eq:dvnuh} we infer that, if there is $h:\Sg\to\rr$ locally integrable with respect to $d\Sg$, and such that $\mnh^{-1}_p(s)\leq h(p)$ for any $p\in\Sg-\Sg_0$ and any $s\in I$, then the restriction of $\varphi$ to $I'\times\Sg$ is admissible for any $I'\sub\sub I$. In particular, if $\varphi$ is $C^3$ on $\Sg-\Sg_0$ and compactly supported on $\Sg-\Sg_0$, then there is  $I'\sub\sub I$ such that the restriction of $\varphi$ to $I'\times\Sg$ is admissible.
\end{example}

The analytic condition $\mnh^{-1}_p(s)\leq h(p)$, which is sufficient to get an admissible variation, was found by Montefalcone~\cite[Eqs.~(25) and (26)]{montefalcone2} in the setting of Carnot groups.  Since this condition is not easy to verify in practice, we are led to produce admissible variation in more geometric ways. In precise terms we will focus on normal and vertical admissible variations moving $\Sg_0$. We need some notation and facts that will be useful in the sequel.

Let $\Sg$ be an oriented $C^2$ surface immersed in $M$. For any $C^1$ vector field $U$ with compact support on $\Sg$ we consider the $C^1$ variation $\varphi:I\times\Sg\to M$ given by $\varphi_s(p):=\exp_p(s\,U_p)$, where $I$ is an open interval containing $0$ and $\exp_p$ denotes the Riemannian exponential map of $M$ at $p$. Take a point $p\in\Sg$ and a vector $e\in T_p\Sg$. Let $\alpha:(-\eps_0,\eps_0)\to\Sg$ be a $C^1$ curve with $\alpha(0)=p$ and $\dot{\alpha}(0)=e$. We define the map $F:(-\eps_0,\eps_0)\times I\to M$ by $F(\eps,s):=\varphi_s(\alpha(\eps))$. By Remark~\ref{re:ricase} we have that $E(s):=(\ptl F/\ptl\eps)(0,s)=e(\varphi_s)$ is a $C^\infty$ vector field along the Riemannian geodesic $\ga_p(s):=\exp_p(s\,U_p)$ satisfying $[\dot{\ga}_p,E]=0$ and the Jacobi equation \eqref{eq:rijacobi}. Note also that $E(0)=e$ and $E'(0)=D_eU$. Fix a basis $\{e_1,e_2\}$ in $T_p\Sg$ and let $E_i(s)=e_i(\varphi_s)$, $i=1,2$, be the associated Jacobi fields. Then $\{E_1(s),E_2(s)\}$ is a basis of $T_{\ga_p(s)}\Sg$ and 
\begin{equation}
\label{eq:jacobiano1}
|\text{Jac}\,\varphi_s|_p=|E_1\times E_2|(s)
=\big(|E_1|^2\,|E_2|^2-\escpr{E_1,E_2}\big)^{1/2}(s), 
\end{equation}
which is a $C^\infty$ function along $\ga_p$. Moreover, the unit normal of $\Sg_s:=\varphi_s(\Sg)$ at $\varphi_s(p)$ is 
\begin{equation}
\label{eq:jacobiano2}
N_p(s)=\frac{E_1\times E_2}{|E_1\times E_2|}\,(s), 
\end{equation}
which is also $C^\infty$ along $\ga_p$. In the two previous formulas the cross product $\times$ is taken with respect to an orthonormal basis of the tangent space to $M$ along $\ga_p(s)$. From the $C^\infty$ regularity of $|\text{Jac}\,\varphi_s|_p$ and $N_p(s)$ it follows that, if $\mnh_p(s)>0$ for any $p\in\Sg-\Sg_0$ and any $s\in I$, then the conditions (i), (ii) and (iv) in Definition~\ref{def:admissible} hold. 

We are now ready to prove that the deformation of a compact surface $\Sg$ by means of Riemannian parallel surfaces is an admissible variation.

\begin{lemma}
\label{lem:parallels}
Let $\Sg$ be a compact, oriented $C^2$ surface immersed inside a Sasakian sub-Riemannian $3$-manifold $M$. Then, there is an open interval $I'\sub\rr$ such that the variation $\varphi:I'\times\Sg\to M$ defined by $\varphi_s(p):=\exp_p(sN_p)$ is admissible.
\end{lemma}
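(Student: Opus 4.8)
The plan is to reduce everything to a single geometric observation: along this particular variation the function $\mnh_p(s)$ does not depend on $s$. Once this is known, conditions (i)--(iv) of Definition~\ref{def:admissible} become essentially automatic, and the delicate analytic condition $\mnh^{-1}_p(s)\le h(p)$ from Example~\ref{ex:trivial} never has to be checked. First I would fix the interval: since $\Sg$ is compact and $|\text{Jac}\,\varphi_0|_p=1$ for every $p$, the joint continuity of $(s,p)\mapsto|\text{Jac}\,\varphi_s|_p$ together with compactness yields an open interval $I'\ni 0$ on which $|\text{Jac}\,\varphi_s|_p>0$, so that each $\varphi_s$ is an immersion and $C=\Sg$.

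The core step is to show that the normal geodesics $\ga_p(s):=\exp_p(sN_p)$ meet the parallel surfaces $\Sg_s$ orthogonally, i.e. $N_p(s)=\dot\ga_p(s)$. With the Jacobi fields $E_i(s)=e_i(\varphi_s)$ of the paragraph preceding the lemma, I would differentiate $\escpr{\dot\ga_p,E_i}$ along $\ga_p$: using $D_{\dot\ga_p}\dot\ga_p=0$, the identity $E_i'=D_{E_i}\dot\ga_p$ (which follows from $[\dot\ga_p,E_i]=0$), and $|\dot\ga_p|\equiv 1$, this derivative vanishes; since $\escpr{\dot\ga_p(0),E_i(0)}=\escpr{N_p,e_i}=0$, the inner product stays zero. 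Hence $\dot\ga_p(s)\perp T_{\ga_p(s)}\Sg_s$, and comparing with the continuous unit normal \eqref{eq:jacobiano2} at $s=0$ gives $N_p(s)=\dot\ga_p(s)$. Next I would differentiate $\escpr{\dot\ga_p,T}$: by the Sasakian relation $D_UT=J(U)$ and $\escpr{U,J(U)}=0$ (a consequence of \eqref{eq:conmute}), one gets $\escpr{\dot\ga_p,T}'=\escpr{\dot\ga_p,J(\dot\ga_p)}=0$. Thus $\escpr{N_p(s),T}=\escpr{N,T}(p)$ is constant along $\ga_p$, and since $N_p(s)$ is a unit vector,
\[
\mnh_p(s)=\sqrt{1-\escpr{N_p(s),T}^2}=\sqrt{1-\escpr{N,T}(p)^2}=\mnh(p),
\]
independently of $s$.

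Finally I would assemble the admissibility. Writing $f(s,p)=\mnh(p)\,|\text{Jac}\,\varphi_s|_p$, the factor $\mnh(p)$ is independent of $s$ while $|\text{Jac}\,\varphi_s|_p$ is $C^\infty$ in $s$ by \eqref{eq:jacobiano1}; this gives conditions (i), (ii) and (iv) directly, since $(\ptl f/\ptl s)(s,p)=\mnh(p)\,|\text{Jac}\,\varphi_s|_p'$ is itself smooth, hence absolutely continuous, in $s$. For (iii) I would observe that $(\ptl^2 f/\ptl s^2)(s,p)=\mnh(p)\,|\text{Jac}\,\varphi_s|_p''$ is continuous on the compact set $\overline{I'}\times\Sg$ (after shrinking $I'$ to have compact closure inside the original interval), hence bounded by a constant $M$; as $\mnh(p)\le 1$, the constant function $h\equiv M$ dominates $|\ptl^2 f/\ptl s^2|$ and is integrable on the compact surface $\Sg$. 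The only delicate point, and the sole place where the Sasakian structure is genuinely used, is the constancy of $\mnh_p(s)$ in the second paragraph; I expect this to be the main obstacle, after which the verification of Definition~\ref{def:admissible} is a routine bounded-derivative argument on a compact set.
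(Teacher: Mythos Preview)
Your proposal is correct and follows essentially the same strategy as the paper: show $N_p(s)=\dot\ga_p(s)$ so that $\escpr{N_p(s),T}$ is constant along $\ga_p$ (via $D_UT=J(U)$), whence $\mnh_p(s)=\mnh(p)$ and admissibility reduces to a compactness bound on $|\text{Jac}\,\varphi_s|_p''$. The only cosmetic difference is in the orthogonality step: the paper uses the Jacobi equation \eqref{eq:rijacobi} to see that $\escpr{E,\dot\ga_p}$ is affine and then checks $\escpr{E,\dot\ga_p}(0)=0$, $\escpr{E,\dot\ga_p}'(0)=\escpr{D_eN,N_p}=0$, whereas you show the first derivative vanishes identically via $[\dot\ga_p,E_i]=0$ and $|U|\equiv 1$; both are standard and lead to the same conclusion.
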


\begin{proof}
Take a point $p\in\Sg$ and a vector $e\in T_p\Sg$. Let $E(s):=e(\varphi_s)$ be the associated Jacobi field along $\ga_p(s):=\exp_p(sN_p)$. From equation~\eqref{eq:rijacobi} we get that $\escpr{E,\dot{\ga}_p}$ is an affine function along $\ga_p$. Thus equalities $E(0)=e$ and $E'(0)=D_eN$ give us $\escpr{E,\dot{\ga}_p}=0$ along $\ga_p$. As $e$ is an arbitrary vector in $T_p\Sg$, the unit normal to $\Sg_s$ at $\varphi_s(p)$ is $N_p(s)=\dot{\ga}_p(s)$. It follows that $\mnh^2_p(s)=1-\escpr{\dot{\ga}_p,T}^2(s)=\mnh^2(p)$ since $\escpr{\dot{\ga}_p,T}$ is constant along $\ga_p$. So, we have $\mnh_p(s)\,|\text{Jac}\,\varphi_s|_p=\mnh(p)\,|\text{Jac}\,\varphi_s|_p$. Hence conditions (i), (ii) and (iv) in Definition~\ref{def:admissible} are satisfied. On the other hand, from equation~\eqref{eq:enemigo} and the compactness of $\Sg$, the derivative $|\text{Jac}\,\varphi_s|_p''(s)$ is uniformly bounded as a function of $s\in I'$ and $p\in\Sg$ provided $I'$ is a small open interval containing $0$. This finishes the proof.
\end{proof}

The previous lemma together with Example~\ref{ex:trivial} allows us to construct more general admissible variations based on parallel surfaces near the singular set.

\begin{corollary}
\label{cor:parallels}
Let $M$ be a Sasakian sub-Riemannian $3$-manifold and $\Sg$ an oriented $C^2$ surface immersed in $M$ with $C^3$ regular set $\Sg-\Sg_0$. Consider a variation $\varphi:I\times\Sg\to M$ of $\Sg$ satisfying:
\begin{itemize} 
\item[(i)] there is $O\sub\sub\Sg$ with $\Sg_0\sub O$ such that $\varphi_s(p)=\exp_p(sN_p)$ for $s\in I$ and $p\in O$,
\item[(ii)] the restriction of $\varphi$ to $(\Sg-O)\times I$ is of class $C^3$. 
\end{itemize}
Then, there is an interval $I'\sub\sub I$ such that the restriction of $\varphi$ to $I'\times\Sg$ is admissible.
\end{corollary}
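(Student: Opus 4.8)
The plan is to verify directly the four conditions of Definition~\ref{def:admissible} for the function $f(s,p):=\mnh_p(s)\,|\text{Jac}\,\varphi_s|_p$, after shrinking $I$ to a suitable $I'\sub\sub I$. I would split the regular set $\Sg-\Sg_0$ according to the two descriptions of $\varphi$: the relatively compact piece $O$, where $\varphi$ deforms $\Sg$ by Riemannian parallel surfaces, and the complement $\Sg-O$, all of whose points are regular since $\Sg_0\sub O$. On each piece, for fixed $p$ in the regular set, the map $s\mapsto f(s,p)$ is $C^2$ (the Jacobian and normal are smooth in $s$ by the formulas \eqref{eq:jacobiano1} and \eqref{eq:jacobiano2} on $O$, and on $\Sg-O$ the $C^3$ hypothesis together with $\mnh_p(s)>0$ gives the same). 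This makes conditions (i), (ii) and (iv) automatic, so the whole difficulty reduces to condition (iii): producing a function $h$, integrable on the support $C$ of $\varphi$, that dominates $|\ptl^2 f/\ptl s^2|$ uniformly in $s$.

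On $O$ I would invoke the computation in the proof of Lemma~\ref{lem:parallels}: for the parallel flow $\varphi_s(p)=\exp_p(sN_p)$ one has $N_p(s)=\dot{\ga}_p(s)$ with $\escpr{\dot{\ga}_p,T}$ constant, so $\mnh_p(s)=\mnh(p)$ is \emph{independent of} $s$. Hence $f(s,p)=\mnh(p)\,|\text{Jac}\,\varphi_s|_p$ and $(\ptl^2 f/\ptl s^2)(s,p)=\mnh(p)\,|\text{Jac}\,\varphi_s|_p''$, so the terms carrying $\mnh^{-1}$, which would enter through $\mnh''_p(s)$ and the expression \eqref{eq:dvnuh} for $D_U\nu_h$, simply do not occur. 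Since $\mnh(p)\leq 1$ by \eqref{eq:relations} and, as noted in Example~\ref{ex:trivial}, $|\text{Jac}\,\varphi_s|_p''$ is continuous on $I\times\Sg$, the compactness of $\overline{O}$ bounds this quantity uniformly for $s$ in a small interval; this yields a constant bound for $|\ptl^2 f/\ptl s^2|$ on $O$.

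On $\Sg-O$ I would first discard $\Sg-C$, where $\varphi$ is stationary and $\ptl^2 f/\ptl s^2\equiv 0$, reducing to the compact set $C\setminus O$. There $\Sg_0\cap(C\setminus O)=\emptyset$, so $\mnh_p(0)=\mnh(p)>0$ and by compactness $\mnh(p)\geq c>0$; joint continuity of $(s,p)\mapsto\mnh_p(s)$, coming from the Jacobi-field formula \eqref{eq:jacobiano2} for $N_p(s)$, lets me shrink $I'$ so that $\mnh_p(s)\geq c/2$ for all $p\in C\setminus O$ and $s\in I'$. With $\mnh^{-1}_p(s)$ now uniformly bounded, all the summands of $\ptl^2 f/\ptl s^2$, given through the formulas of Example~\ref{ex:trivial} for $\mnh'_p$, $\mnh''_p$ and $|\text{Jac}\,\varphi_s|_p''$, are continuous on the compact set $(C\setminus O)\times\overline{I'}$, hence uniformly bounded. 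Defining $h$ as the larger of the two constant bounds on $O$ and $C\setminus O$, and zero outside $C$, gives an integrable dominating function, since both $\overline O$ and $C\setminus O$ have finite Riemannian area; this establishes (iii) and completes the proof. The main obstacle is exactly the failure of $\mnh^{-1}$ to be controlled near $\Sg_0$, and the role of hypothesis (i) is precisely to freeze $\mnh$ along the parallel flow so that this obstruction never arises on $O$, while away from $\Sg_0$ compactness restores the lower bound on $\mnh$.
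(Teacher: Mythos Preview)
Your proposal is correct and follows the same route the paper indicates: split $\Sg-\Sg_0$ into $O$ and its complement, apply the parallel-surface computation of Lemma~\ref{lem:parallels} on $O$ (where $\mnh_p(s)\equiv\mnh(p)$ eliminates the dangerous $\mnh^{-1}$ term), and use the argument of Example~\ref{ex:trivial} on the compact set $C\setminus O$, where a uniform positive lower bound on $\mnh_p(s)$ is available after shrinking the interval. One small correction: on $\Sg-O$ the variation is only assumed $C^3$, not of exponential type, so the Jacobi-field formula \eqref{eq:jacobiano2} is not the source of joint continuity of $\mnh_p(s)$ there; this continuity follows directly from the $C^3$ regularity of $\varphi$, and your bound goes through unchanged.
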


By a \emph{vertical variation} of $\Sg$ we mean one of the form $\varphi_s(p):=\exp_p(s\,u(p)\,T_p)$, where $u\in C_0^1(\Sg)$. This means that the surfaces $\Sg_s:=\varphi_s(\Sg)$ are all vertical graphs over $\Sg$. For the case $u=1$ the variation is admissible by Example~\ref{ex:killing} since $T$ is a sub-Riemannian Killing field and $\varphi_s$ coincides with the one-parameter group of $T$. In the next result we provide more examples of admissible vertical variations. Recall that $\{Z,S\}$ is the tangent basis to $\Sg-\Sg_0$ defined in \eqref{eq:nuh} and \eqref{eq:ese}. 

\begin{lemma} 
\label{lem:vertvar}
Let $\Sg$ be an oriented $C^2$ surface immersed inside a Sasakian sub-Riemannian $3$-mani\-fold $M$. Suppose that $\escpr{N,T}$ does not vanish along $\Sg$ and that $\mnh^{-1}$ is locally integrable with respect to $d\Sg$. Let $u\in C^1_0(\Sg)$ such that $|S(u)|\leq h\,|Z(u)|$ in $\Sg-\Sg_0$, for some bounded function $h$. Then, the vertical variation $\varphi_s(p):=\exp_p(s\,u(p)\,T_p)$ is admissible.
\end{lemma}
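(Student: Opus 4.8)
The plan is to compute the area integrand $f(s,p):=\mnh_p(s)\,|\text{Jac}\,\var_s|_p$ in closed form, exploiting that the deformation is driven by the Reeb flow. Indeed $\var_s(p)=\exp_p\big(s\,u(p)\,T_p\big)=\phi_{su(p)}(p)$, where $\{\phi_\tau\}$ is the one-parameter group generated by $T$. Since $M$ is Sasakian, each $\phi_\tau$ is an orientation-preserving isometry preserving the whole contact structure; in particular $d\phi_\tau$ is a linear isometry with $d\phi_\tau(T)=T$ and $d\phi_\tau(\h)=\h$. The velocity field is $U=uT$, which is $C^1$, so the Jacobi-field framework set up above \eqref{eq:jacobiano1} applies and $|\text{Jac}\,\var_s|_p$ and $N_p(s)$ are $C^\infty$ in $s$ along each geodesic $\ga_p(s):=\phi_{su(p)}(p)$.

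First I would fix $p\in\Sg-\Sg_0$, take the orthonormal basis $\{Z_p,S_p\}$ of $T_p\Sg$, and differentiate $\var_s\circ\alpha$ to obtain the Jacobi fields $E_i(s)=d\phi_{su(p)}(e_i)+s\,e_i(u)\,T_{\ga_p(s)}$. Using that $d\phi_{su(p)}$ commutes with the cross product (being an orientation-preserving isometry), the orthonormal frame $\{Z_p,(\nuh)_p,T_p\}$, and the identity $N_p=\escpr{N,T}\,T_p+\mnh\,(\nuh)_p$, formulas \eqref{eq:jacobiano1} and \eqref{eq:jacobiano2} would reduce to
\[
|\text{Jac}\,\var_s|_p^2=s^2\,Z(u)^2\,\escpr{N,T}^2+\big(\mnh-s\,S(u)\big)^2+\escpr{N,T}^2,\qquad \escpr{N_p(s),T}=\frac{\escpr{N,T}}{|\text{Jac}\,\var_s|_p}.
\]
Since $\mnh_p(s)^2=1-\escpr{N_p(s),T}^2$, these combine into the remarkably simple identity
\[
f(s,p)=\mnh_p(s)\,|\text{Jac}\,\var_s|_p=\sqrt{s^2\,Z(u)^2\,\escpr{N,T}^2+\big(\mnh-s\,S(u)\big)^2}.
\]
I would write the radicand as the quadratic $A(s,p)=\alpha\,s^2+\beta\,s+\gamma$ with $\alpha=Z(u)^2\escpr{N,T}^2+S(u)^2$, $\beta=-2\,\mnh\,S(u)$, $\gamma=\mnh^2$. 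On $\Sg-\Sg_0$ we have $\mnh>0$, and the hypothesis $|S(u)|\le h\,|Z(u)|$ forces $S(u)=0$ wherever $Z(u)=0$; a short case analysis then shows $A(s,p)>0$ for every $s$. Hence $\mnh_p(s)>0$ on $\Sg-\Sg_0$, and conditions (i), (ii) and (iv) of Definition~\ref{def:admissible} follow from the remark preceding Lemma~\ref{lem:parallels}.

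The crux is condition (iii). Differentiating $f=\sqrt A$ twice gives
\[
\frac{\ptl^2 f}{\ptl s^2}(s,p)=\frac{4\alpha\gamma-\beta^2}{4\,A^{3/2}}=\frac{\mnh^2\,\escpr{N,T}^2\,Z(u)^2}{A(s,p)^{3/2}}\geq 0.
\]
Being decreasing in $A$, this quantity is maximized over $s$ at $\min_s A=\mnh^2\,Z(u)^2\,\escpr{N,T}^2/\alpha$, so that $\sup_s(\ptl^2 f/\ptl s^2)=\alpha^{3/2}/\big(\mnh\,|Z(u)|\,|\escpr{N,T}|\big)$ when $Z(u)\neq 0$ (and both sides vanish when $Z(u)=0$). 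The hypothesis then yields $\alpha\le Z(u)^2\,(1+h^2)$, whence the $s$-uniform estimate $|\ptl^2 f/\ptl s^2|\le (1+h^2)^{3/2}\,Z(u)^2\,\mnh^{-1}\,|\escpr{N,T}|^{-1}$. On the compact support $C$ of $u$ the factor $|\escpr{N,T}|$ is bounded below (it is continuous and nowhere zero), $Z(u)$ and $h$ are bounded, and $\mnh^{-1}$ is integrable by assumption; hence $\ptl^2 f/\ptl s^2$ is dominated on $C$ by a fixed integrable function of $p$. This verifies (iii) and proves that $\var$ is admissible.

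The delicate point I would flag is exactly this $s$-uniform domination: the naive pointwise bound coming from $A\ge s^2 Z(u)^2\escpr{N,T}^2$ degenerates as $s\to0$, and only by minimizing the quadratic $A(s,\cdot)$ in $s$ does one extract an estimate independent of $s$. It is precisely here that the structural hypothesis $|S(u)|\le h\,|Z(u)|$ is indispensable, since it rules out the configuration $Z(u)=0$, $S(u)\neq0$ in which some regular point would cross into the singular set (making $A=0$) and destroy both the differentiability and the integrable domination.
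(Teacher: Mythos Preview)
Your proof is correct and follows essentially the same route as the paper's. Both arrive at the identical quadratic $f(s,p)^2=\alpha s^2+\beta s+\gamma$ with $\alpha=\escpr{N,T}^2 Z(u)^2+S(u)^2$, $\beta=-2\mnh\,S(u)$, $\gamma=\mnh^2$, and then bound $\partial^2 f/\partial s^2=(4\alpha\gamma-\beta^2)/(4A^{3/2})$ uniformly in $s$ by evaluating at the minimum of $A$. The only cosmetic difference is in how the Jacobi fields are obtained: you exploit directly that $\varphi_s(p)=\phi_{su(p)}(p)$ with $\phi_\tau$ the Reeb flow, so $E_i(s)=d\phi_{su(p)}(e_i)+s\,e_i(u)\,T$, whereas the paper integrates the Jacobi equation $E''+u(p)^2 E_h=0$ in a parallel horizontal frame; both computations produce the same cross-product formula $f(s,p)=|(E_1\times E_2)_h|$.
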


\begin{proof}
Take a point $p\in \Sg$. The Riemannian geodesic $\ga_p(s):=\exp_p(s\,u(p)\,T_p)$ satisfies the equality $\ga_p(s)=\alpha_p(s\,u(p))$, where $\alpha_p$ is the integral curve of $T$ through $p$. In particular, we have $\dot{\ga}_p(s)=u(p)\,T_{\ga_p(s)}$. Let $\{X(s),Y(s)\}$ be a positive orthonormal basis of the horizontal plane at $\ga_p(s)$ obtained by parallel transport of a similar basis $\{X_p,Y_p\}$ of $\h_p$. For any vector $e\in T_p\Sg$, we know that the associated vector field $E(s):=e(\varphi_s)$ satisfies the Jacobi equation \eqref{eq:rijacobi}. By using \eqref{eq:ruvt} this equation reads 
$E''(s)+u(p)^2\,E(s)_h=0$. Hence, an easy integration together with equalities $E(0)=e$ and $E'(0)=e(u)\,T_p+u(p)\,J(e)$ implies that
\begin{equation}
\label{eq:E}
E(s)=x(s)\,X(s)+y(s)\,Y(s)+t(s)\,T(s),
\end{equation}
where $T(s):=T_{\ga_{p}(s)}$, and the functions $x(s)$, $y(s)$, $t(s)$ are given by
\begin{align}
\label{eq:verticalpart}
\nonumber x(s)&:=\escpr{e,X_p}\,\cos(u(p)\,s)-\escpr{e,Y_p}\,\sin(u(p)\,s),
\\
y(s)&:=\escpr{e,Y_p}\,\cos(u(p)\,s)+\escpr{e,X_p}\,\sin(u(p)\,s),
\\
\nonumber 
t(s)&:=e(u)\,s+\escpr{e,T_p}.
\end{align}

Now suppose that $p\in\Sg-\Sg_0$ and consider the orthonormal basis $\{e_1,e_2\}$ of $T_p\Sg$ defined by $e_1=Z_p$ and $e_2=S_p$. Let $E_i(s):=e_i(\varphi_s)$, $i=1,2$, be the associated Jacobi fields along $\ga_p$. From equations \eqref{eq:jacobiano1} and \eqref{eq:jacobiano2}, we get
\begin{equation*}
f(s,p):=\mnh_p(s)\,|\text{Jac}\,\varphi_s|_p=|(E_1(s)\times E_2(s))_h|,
\end{equation*}
where the cross product is taken with respect to the basis $\{X(s),Y(s),T(s)\}$. Hence, a straightforward computation from \eqref{eq:E} and \eqref{eq:verticalpart} shows that
\[
f(s,p)=Q_p(s)^{1/2}, \quad Q_p(s):=a_p\,s^2+b_p\,s+c_p,
\]
where
\begin{equation*}
a_p:=\escpr{N_p,T_p}^2\,Z_p(u)^2+S_p(u)^2, \quad 
b_p:=-2\,\mnh(p)\,S_p(u),\quad
c_p:=\mnh^2(p).
\end{equation*}
Note that the discriminant $\text{disc}(Q_p)$ equals
\[
-4\,\mnh^2(p)\,\escpr{N_p,T_p}^2\,Z_p(u)^2,
\]
which is less than or equal to $0$. Moreover, the fact that $\escpr{N,T}$ never vanishes on $\Sg$ together with inequality $|S_p(u)|\leq h(p)\,|Z_p(u)|$ implies that $\text{disc}(Q_p)=0$ if and only if $Z_p(u)=S_p(u)=0$. From here we deduce that $Q_p(s)>0$ for any $p\in\Sg-\Sg_0$ and any $s\in\rr$. In particular, for any $p\in\Sg-\Sg_0$, the function $s\mapsto f(s,p)$ is $C^\infty$ and so, the conditions (i), (ii) and (iv) in Definition~\ref{def:admissible} are satisfied. 

Finally, take $p\in\text{supp}(u)\cap(\Sg-\Sg_0)$ and suppose $a_p\neq 0$. Since $Q_p(s)\geq Q_p(-b_p/(2a_p))=-\text{disc}(Q_p)/(4a_p)$ and $|S_p(u)|\leq h(p)\,|Z_p(u)|$ with $h$ bounded, we obtain
\begin{align*}
\left|\frac{\ptl^2 f}{\ptl s^2}(s,p)\right|&=-\frac{\text{disc}(Q_p)}{4\,Q_p(s)^{3/2}}
\leq\frac{2\,a_p^{3/2}}{(-\text{disc}(Q_p))^{1/2}}
=\frac{\,\big(\escpr{N_p,T_p}^2\,Z_p(u)^2+S_p(u)^2\big)^{3/2}}
{\mnh(p)\,|\escpr{N_p,T_p}|\,|Z_p(u)|}
\\
&\leq \frac{\,\big(\escpr{N_p,T_p}^2+h(p)^2\big)^{3/2}\,Z_p(u)^2 }{\mnh(p)\,|\escpr{N_p,T_p}|}
\leq\frac{C\,\escpr{(\nabla_\Sg u)_p,Z_p}^2}{\mnh(p)}\leq C'\,\mnh^{-1}(p),
\end{align*}
for some constants $C,C'>0$. The previous inequality also holds if $a_p=0$ since, in that case, $b_p=0$ and $(\ptl^2 f/\ptl s^2)(s,p)=0$ for any $s\in\rr$. From the hypothesis that $\mnh^{-1}$ is locally integrable with respect to $d\Sg$ we conclude that condition (iii) in Definition~\ref{def:admissible} holds, proving the claim.
\end{proof}

Note that $\escpr{N,T}\neq 0$ in small neighborhoods of $\Sg_0$. Hence we can combine Lemma~\ref{lem:vertvar} with Example~\ref{ex:trivial} to construct admissible variations based on suitable vertical deformations near $\Sg_0$.

\begin{corollary}
\label{cor:vertical}
Let $M$ be a Sasakian sub-Riemannian $3$-manifold and $\Sg$ an oriented $C^2$ surface immersed in $M$. Suppose that $\Sg-\Sg_0$ is $C^3$ and the function $\mnh^{-1}$ is locally integrable with respect to $d\Sg$. Consider a variation $\varphi:I\times\Sg\to M$ satisfying:
\begin{itemize} 
\item[(i)] there is a small open neighborhood $O$ of $\Sg_0$, a function $u\in C^1(O)$ with bounded gradient, and a function $h$ bounded on $O-\Sg_0$, such that $|S_p(u)|\leq h(p)\,|Z_p(u)|$ for any $p\in O-\Sg_0$, and $\varphi_s(p)=\exp_p(s\,u(p)\,T_p)$ for any $s\in I$ and any $p\in O$,
\item[(ii)] the restriction of $\varphi$ to $(\Sg-O)\times I$ is of class $C^3$. 
\end{itemize}
Then, there is an interval $I'\sub\sub I$ such that the restriction of $\varphi$ to $I'\times\Sg$ is admissible.
\end{corollary}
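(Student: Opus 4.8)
The plan is to prove admissibility by decomposing the compact support of $\var$ into a region near the singular set, where Lemma~\ref{lem:vertvar} applies, and a region away from it, where Example~\ref{ex:trivial} applies, and then to check that the four conditions of Definition~\ref{def:admissible} survive this patching. Before starting I would shrink $O$ if necessary so that $\escpr{N,T}$ does not vanish on $O$; this is possible because $\escpr{N,T}=\pm 1$ on $\Sg_0$ and $\escpr{N,T}$ is continuous, so it remains nonzero on a small neighborhood of $\Sg_0$. I would also fix the compact set $C\sub\sub\Sg$ outside of which $\var$ is stationary, and work with the decomposition $C=(C\cap O)\cup\big(C\cap(\Sg-O)\big)$.

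First I would treat the region $O-\Sg_0$, where $\var_s(p)=\exp_p(s\,u(p)\,T_p)$ with $u\in C^1(O)$ of bounded gradient and $|S_p(u)|\leq h(p)\,|Z_p(u)|$, so that the computation in the proof of Lemma~\ref{lem:vertvar} applies on $O$. That computation shows that $f(s,p):=\mnh_p(s)\,|\text{Jac}\,\var_s|_p$ equals $Q_p(s)^{1/2}$ for a quadratic $Q_p$ with $Q_p(s)>0$ for all $s$, whence $s\mapsto f(s,p)$ is $C^\infty$ (giving conditions (i), (ii), (iv) of Definition~\ref{def:admissible} on $O-\Sg_0$) and, more importantly, it supplies the pointwise bound $|(\ptl^2 f/\ptl s^2)(s,p)|\leq C'\,\mnh^{-1}(p)$. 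Since $\mnh^{-1}$ is locally integrable with respect to $d\Sg$ and $\overline{C\cap O}\sub C$ is compact, the function $C'\,\mnh^{-1}$ is integrable on $C\cap O$; this is the candidate dominating function on the inner region.

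Next I would treat $C\cap(\Sg-O)$. This set is compact and disjoint from $\Sg_0$, so $\mnh$ attains a positive minimum $\delta$ there. By continuity of $(s,p)\mapsto\mnh_p(s)$ together with compactness, there is an interval $I'\sub\sub I$ with $\mnh_p(s)\geq\delta/2>0$ for all $p\in C\cap(\Sg-O)$ and $s\in I'$; in other words $\var$ preserves the regular set on this piece. As $\var$ is $C^3$ on $(\Sg-O)\times I$, Example~\ref{ex:trivial} then yields conditions (i), (ii), (iv), and since $\mnh_p^{-1}(s)\leq 2/\delta$ is bounded, the second derivative $(\ptl^2 f/\ptl s^2)(s,p)$ is continuous, hence bounded, on the compact set $I'\times\big(C\cap(\Sg-O)\big)$; a constant therefore dominates it on the outer region. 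Finally I would assemble the two pieces: the function equal to $C'\,\mnh^{-1}$ on $C\cap O$ and to a constant on $C\cap(\Sg-O)$ is integrable on $C$, so condition (iii) holds globally, while conditions (i), (ii), (iv) hold off $\Sg_0$ because they hold on each piece; hence the restriction of $\var$ to $I'\times\Sg$ is admissible.

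The main obstacle is condition (iii): near $\Sg_0$ the factor $\mnh_p(s)$ degenerates and $\ptl^2 f/\ptl s^2$ is unbounded, so everything hinges on the quantitative estimate from Lemma~\ref{lem:vertvar}, where the hypothesis $|S_p(u)|\leq h(p)\,|Z_p(u)|$ is precisely what forces the blow-up to be no worse than the integrable quantity $\mnh^{-1}$. By contrast, the uniform positivity of $\mnh$ away from $\Sg_0$, the preservation of the regular set on each region, and the bookkeeping that combines the inner and outer dominating functions into a single integrable one are comparatively routine.
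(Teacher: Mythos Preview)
Your proposal is correct and follows exactly the approach the paper indicates: the text preceding the corollary explicitly says to combine Lemma~\ref{lem:vertvar} (for the vertical piece near $\Sg_0$, after shrinking $O$ so that $\escpr{N,T}\neq 0$) with Example~\ref{ex:trivial} (for the $C^3$ piece away from $\Sg_0$), and you have supplied precisely those details, including the key observation that the $\mnh^{-1}$ bound from Lemma~\ref{lem:vertvar} is what makes condition~(iii) work on the inner region.
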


The previous results provide admissible variations of a $C^2$ volume-preserving area-stationary surface $\Sg$ with $\Sg_0\neq\emptyset$ inside a $3$-dimensional space form. As we proved in Theorems~\ref{th:isolated} and \ref{th:cmula}, such a surface is either a plane $\mathcal{L}_\la(p)$ as in \eqref{eq:lla}, a spherical surface $\sla(p)$ as in \eqref{eq:sla}, or a surface $\mathcal{C}_{\mu,\la}(\Ga)$ as in Example~\ref{ex:cmula}. Since all these surface are $C^\infty$ off of the singular set, then Corollary~\ref{cor:parallels} implies that $C^3$ perturbations of the deformation of $\Sg$ by Riemannian parallels are admissible variations. On the other hand, the construction in Corollary~\ref{cor:vertical} only applies for $\mathcal{L}_\la(p)$ and $\sla(p)$ since the function $\mnh^{-1}$ is locally integrable on these surfaces, whereas it is not on $\mathcal{C}_{\mu,\la}(\Ga)$. In Example~\ref{ex:hecho} below we give a particular but important family of functions in $\sla(p)$ satisfying the condition $|S(u)|\leq h\,|Z(u)|$, thus providing examples of admissible vertical variations. We conclude that the class of variations of a spherical surface $\sla(p)$ for which the second variation formula and the stability results in Section~\ref{sec:stability} are valid is very large.

\begin{example}
\label{ex:hecho}
Let $\sla(p)$ be a spherical surface inside a $3$-dimensional space form of curvature $\kappa$. We consider the immersion $F(\theta,s):=\ga_\theta(s)$ introduced in Lemma~\ref{lem:slaF}, and the associated unit normal $N$. We know from Corollary~\ref{cor:integrability} that $\mnh^{-1}$ is integrable with respect to $d\sla(p)$. For any $u\in C^1(\sla(p))$ we denote $\overline{u}:=u\circ F$. By Lemma~\ref{lem:slaF} (ii) and equation \eqref{eq:esepolar}, we have
\begin{align*}
Z(u)&=\frac{\ptl\overline{u}}{\ptl s}(\theta,s), \quad S(u)=\frac{-1}{\sqrt{v(s)^2+(v'(s)/2)^2}}\,\frac{\ptl\overline{u}}{\ptl\theta}(\theta,s)-\la\,\mnh (s)\,\frac{\ptl\overline{u}}{\ptl s}(\theta,s),
\end{align*} 
where $v(s):=\sin^2(\tau s)/\tau^2$ and $\tau:=\sqrt{\la^2+\kappa}$. From here we deduce that, if $\ptl\overline{u}/\ptl\theta=0$, then $S(u)=h\,Z(u)$ for some function $h$ bounded off of the poles. In particular, any variation $\varphi$ of $\sla(p)$ whose restriction to small neighborhoods of the poles is of the form $\varphi_s(p)=\exp_p(s\,u(p)\,T_p)$ will be admissible by Corollary~\ref{cor:vertical}. Note that in a model space $\e$ the hypothesis $\ptl\overline{u}/\ptl \theta=0$ means that the function $u$ is radially symmetric, i.e., invariant under Euclidean vertical rotations. 
\end{example}

\providecommand{\bysame}{\leavevmode\hbox to3em{\hrulefill}\thinspace}
\providecommand{\MR}{\relax\ifhmode\unskip\space\fi MR }
\providecommand{\MRhref}[2]{%
  \href{http://www.ams.org/mathscinet-getitem?mr=#1}{#2}
}
\providecommand{\href}[2]{#2}

\end{document}